\documentclass[12pt,letterpaper,reqno]{amsart}
\usepackage{epsfig}
\usepackage{amsmath}
\usepackage{amssymb}
\usepackage{amsthm}
\usepackage{indentfirst}
\usepackage{xspace}
\usepackage[dvipsnames]{xcolor}
\usepackage{setspace}
\usepackage{verbatim}
\usepackage[letterpaper,margin=1in,headheight=15pt]{geometry}
\usepackage{mathpazo}
\usepackage{booktabs}
\usepackage{float} 
\definecolor{shadecolor}{rgb}{0.85,0.85,0.85}
\usepackage{bibentry}
\usepackage{bm}
\usepackage{hyperref}
\definecolor{darkred}{rgb}{0.5,0.15,0.15}
\hypersetup{colorlinks=true,urlcolor=darkred,linkcolor=darkred,citecolor=darkred}
\allowdisplaybreaks

\allowdisplaybreaks

\newtheorem{thm}{Theorem}
\newtheorem{cor}[thm]{Corollary}

\newtheorem{lem}[thm]{Lemma}
\newtheorem{prop}[thm]{Proposition}

\newtheorem{ex}[thm]{Example}

\newtheorem{construction}[thm]{Construction}

 \newtheorem{mainthm}[thm]{Main Theorem}
 \newtheorem{defprop}[thm]{Definition/Proposition}  
\theoremstyle{remark}
\newtheorem{rem}[thm]{Remark}
\theoremstyle{definition}
\newtheorem{defn}[thm]{Definition}

\numberwithin{thm}{section}
\numberwithin{equation}{section}
\numberwithin{figure}{section}


\setcounter{tocdepth}{2}

\newcommand{\cB}{\ensuremath{\mathcal B}}
\newcommand{\cL}{\ensuremath{\mathcal L}}

\newcommand{\cM}{\ensuremath{\mathcal M}}

\newcommand{\R}{\ensuremath{\mathbb R}}
\newcommand{\C}{\ensuremath{\mathbb C}}
\newcommand{\CP}{\ensuremath{\mathbb {CP}}}

\newcommand{\Z}{\ensuremath{\mathbb Z}}

\newcommand{\bD}{{\mathbb{D}}}

\newcommand{\cE}{{\mathcal E}}

\newcommand{\delbar}{\ensuremath{\overline{\partial}}}
\newcommand{\zbar}{\ensuremath{\overline{z}}}

\newcommand{\Id}{\ensuremath{\mathrm{Id}}}

\newcommand{\app}{\ensuremath{\mathrm{app}}}
\newcommand{\model}{\ensuremath{\mathrm{mod}}}

\newcommand{\I}{{\mathrm i}}
\newcommand{\e}{{\mathrm e}}
\newcommand{\de}{\mathrm{d}}

\newcommand{\norm}[1]{\lVert#1\rVert}
\newcommand{\IP}[1]{\langle#1\rangle}

\newcommand{\eps}{\epsilon}

\newcommand{\del}{{\partial}}

\DeclareMathOperator{\Det}{Det}
\DeclareMathOperator{\End}{End}

\DeclareMathOperator{\Aut}{Aut}

\newcommand{\fixme}[1]{{\color{blue}{[#1]}}}

 \def\deg{\mathrm{deg}\,}
\def\fid{{\mathrm{mod}}}
\def\app{{\mathrm{app}}}
 \def\2{{(2)}}

 \def\rk{\mathrm{rank}}
 
 \def\<{\left\langle}                
 \def\>{\right\rangle}                 
  
 \def\rext{\mathrm{ext}}

\title{Generic Ends of the Moduli Space of $\mathrm{SL(n,\C)}$-Higgs Bundles}
\date{}

\author{Laura Fredrickson}

\begin{document}
\onehalfspacing
\maketitle

\begin{abstract}
Given a generic ray of Higgs bundles $(\delbar_E, t\varphi)$,
we describe the corresponding family of hermitian metrics $h_t$ solving Hitchin's equations via gluing methods.  
In the process, we construct a family of approximate solutions $h_t^\app$ which differ from  the actual harmonic metrics $h_t$ by error terms of size $\e^{-\delta t}$. Such families of explicit approximate solutions have already proved useful for answering finer questions about the asymptotic geometry of the Hitchin moduli space.
\end{abstract}

\section{Introduction}

In this paper, we describe solutions of Hitchin's equations near the generic ends of the $SU(n)$-Hitchin moduli space by constructing good approximate solutions and perturbing them to actual solutions.  Our paper generalizes Mazzeo-Swoboda-Weiss-Witt's results for the $SU(2)$-Hitchin moduli space \cite{MSWW14, MSWW15}, which has already been useful in their more recent work of the asymptotic geometry of the Hitchin moduli space \cite{MSWW17}. Gaiotto-Moore-Neitzke give a conjectural description of the hyperk\"ahler metric on the $SU(n)$-Hitchin moduli space \cite{GMNhitchin, GMNwallcrossing}, and our finer description
of solutions of $SU(n)$-Hitchin's equations near the ends is a first step towards proving their conjecture. In fact, a number of conjectures from mathematicians and physics about $\cM$ remain open because they require a finer knowledge of the ends of the moduli space
than provided by traditional algebro-geometric techniques alone. As demonstrated in \cite{MSWW17}, constructive analytic techniques complement these well, so we take this approach.

\subsection{Fixed data} \label{sec:fixeddata}

Fix $C=C(I, g_C, \omega)$ a compact K\"ahler curve of $\mathrm{genus} \geq 2$
with metric $g_C$, complex structure $I$, and symplectic form $\omega_C$.
Let $K_C$ be the canonical line bundle. 
Fix $E \rightarrow C$ a complex vector bundle of rank $n$ and degree $d$. 
Let  $\Det \,E$ be the determinant line bundle.
The groups $\Aut(E)$ and $\End(E)$ respectively denote the automorphisms and endomorphisms
of the complex vector bundle $E$ which induce the identity map on $\Det E$.

Additionally fix a 
holomorphic structure, $\delbar_{\Det E}$, and a hermitian structure, $h_{\Det E}$, on the complex line bundle $\Det E$
such that
$h_{\Det E}$ is Hermitian-Einstein for the holomorphic line bundle $(\Det E, \delbar_{\Det E})$.
We do not normalize the Riemannian volume $\mathrm{vol}_{g_C}(C)$ of the curve $C$.
Consequently,
the Hermitian-Einstein condition states that the curvature of the associated Chern connection $D=D(\delbar_{\Det E}, h_{\Det E})$ 
satisfies 
 \begin{equation}
 F_{D} = -\sqrt{-1} \mathrm{deg} E 
  \frac{2\pi \omega_C}{\mathrm{vol}_{g_C}(C)} \Id_{\Det E}.
\end{equation}

\medskip

Given this fixed data, let $\cM$ be the associated Hitchin moduli space.
The Hitchin moduli space consists of triples $(\delbar_E, \varphi, h)$ solving Hitchin's equations up to complex gauge equivalence, defined in \eqref{eq:complexgaugeequiv}. Here,
\begin{itemize}
 \item $\delbar_E$ is a holomorphic structure on $E$,
 \item   $\varphi \in \Omega^{1,0}(C, \End E)$ is the Higgs field, and  
 \item  $h$ is a hermitian metric on $E$. 
\end{itemize}
Additionally, the induced holomorphic and hermitian structures on $\Det E$ must agree with the fixed structures $\delbar_{\Det E}$ and $h_{\Det E}$.  
We say that such a triple $(\delbar_E, \varphi, h)$ is a \emph{solution of $SU(n)$-Hitchin's equations} if 
 \begin{equation} \label{eq:Hitchin}
 \delbar_E \varphi =0, \qquad F^\perp_{D(\delbar_E,h)} + [\varphi, \varphi^{\dagger_h}]=0,
\end{equation}
 where $D(\delbar_E, h)$ is the Chern connection,  $\varphi^{\dagger_h} \in \Omega^{0,1}(C, \End E)$ is the $h$-hermitian adjoint, and $F_D^\perp$ denotes the trace-free part of the the curvature of $D$, i.e. 
 \begin{equation}
  F_{D(\delbar, h)}^\perp= F_{D(\delbar_E,h)} +\sqrt{-1} \frac{\deg(E)}{\rk(E)}  \frac{2\pi \omega_C}{\mathrm{vol}_{g_C}(C)} \Id_E.
 \end{equation}
We call such $h$ the \emph{harmonic metric}
 for the Higgs bundle $(\delbar_E, \varphi)$. A Higgs bundle $(\delbar_E, \varphi)$ admits a harmonic metric if, and only if, $(\delbar_E,\varphi)$ is polystable\footnote{
A Higgs bundle $(\delbar_E, \varphi)$ is \emph{stable} if
for all $\varphi$-invariant subbundles $F$,
$\mu(F) < \mu(E)$; here, $\mu(F):=\frac{\mathrm{deg}(F)}{\mathrm{rank}(F)}$
is the slope of the bundle.
A Higgs bundle is \emph{polystable} if it is the direct sum
of stable Higgs bundles of the same slope.
}.

The action of the complex gauge group $\Aut(E)$ is as follows: given $g\in \Aut(E)$, 
\begin{equation} \label{eq:complexgaugeequiv}
 g \cdot (\delbar_E, \varphi, h) = ( g^{-1} \circ \delbar_E \circ g, g^{-1} \varphi g, g \cdot h), \qquad \mbox{where $(g \cdot h)(v,w)=h(g v,g w)$}.
\end{equation}

\bigskip

\noindent\emph{Unitary formulation of Hitchin's equations.} There's an equivalent unitary formation of Hitchin's equations. In this formulation, we additionally fix a hermitian metric $h_0$ on the complex vector bundle $E \to C$. Now, the Hitchin moduli space consists of pairs $(\de_A, \Phi)$, where
\begin{itemize}
 \item $\de_A$ is a $h_0$-unitary connection, and
 \item $\Phi \in \Omega^{1,0}(\C, \End E)$, 
\end{itemize}
solving $\delbar_A \Phi=0$ and $F^\perp_A + [\Phi, \Phi^{\dagger_{h_0}}]=0$---up to $h_0$-unitary gauge equivalence.

We can pass back and between these two formulations.
Given the pair $(\de_A, \Phi)$, we get the associated triple $(\delbar_A, \Phi, h_0)$. Conversely, given a triple $(\delbar_E, \varphi, h)$,
there is an $\End E$-valued $h_0$-hermitian section $H$ such that $h(v,w)=h_0(Hv, w)$.
 Take the complex gauge transformation $g=H^{-1/2}$.
 Observe that in general, $(g \cdot h)(v,w)=h_0((g^{\dagger_{h_0}} H g) v, w)$;
 consequently, for our choice of gauge transformation $g=H^{-1/2}$, indeed $(g \cdot h)=h_0$.
 Then, for the complex gauge action in \eqref{eq:complexgaugeequiv},
 $g \cdot (\delbar_E, \varphi, h)=(H^{1/2} \circ \delbar_E \circ H^{-1/2}, H^{1/2} \varphi H^{-1/2}, h_0)$. Consequently, 
the associated pair $(\de_A, \Phi)$ is defined by $\delbar_A = H^{1/2} \circ \delbar_E \circ H^{-1/2}$ and $\Phi = H^{1/2} \varphi H^{-1/2}$.

\begin{rem}
Locally, it will be convenient to work in both holomorphic and unitary gauges. 
A local basis $\{s_1, \cdots, s_n\}$ of sections of $E$ is holomorphic if $s_i$
are holomorphic sections of $(E, \delbar_E)$.  
A local basis $\{s_1, \cdots, s_n\}$ of sections of $E$ is unitary if $h_0(s_i, s_j) = \delta_{ij}$.
\end{rem}

\subsection{Summary of results}\label{sec:results}

Fix a polystable Higgs bundle $(\delbar_E, \varphi)$ in a non-degenerate fiber of the Hitchin fibration $\cM \rightarrow \cB$.
Consider the $\R^{+}_t$-family of Higgs bundles 
$(\delbar_E, t \varphi)$. 
We seek to describe the corresponding family of harmonic
metric $h_t$ for $t\gg0$, as shown in Figure \ref{fig:path}. 
To describe  $h_t$,
\begin{itemize}
\item first, we construct a singular hermitian metric $h_\ell$ (\emph{A posteriori}, we find in Corollary \ref{cor:main} that $h_\ell = \lim_{t \to \infty} h_t$.) (\S\ref{sec:limitingconstruction});
\item then, we construct a family of approximate solutions, $h_t^\app$, built from the singular hermitian metric $h_\ell$ and a family of local model solutions (\S\ref{sec:modelsolutions});
\item we prove that this family of approximate hermitian metrics $h_t^\app$
solves Hitchin's equations up to an exponentially-decaying error
(Proposition \ref{defpropreg}); and
\item finally, we perturb from the approximate solutions $h_t^\app$ to the actual solutions $h_t$
using a contraction mapping argument.
This last point is the content of the main theorem, Theorem \ref{thm:main}.
\end{itemize}

\begin{figure}[ht]
 \begin{centering}
  \includegraphics[height=1in]{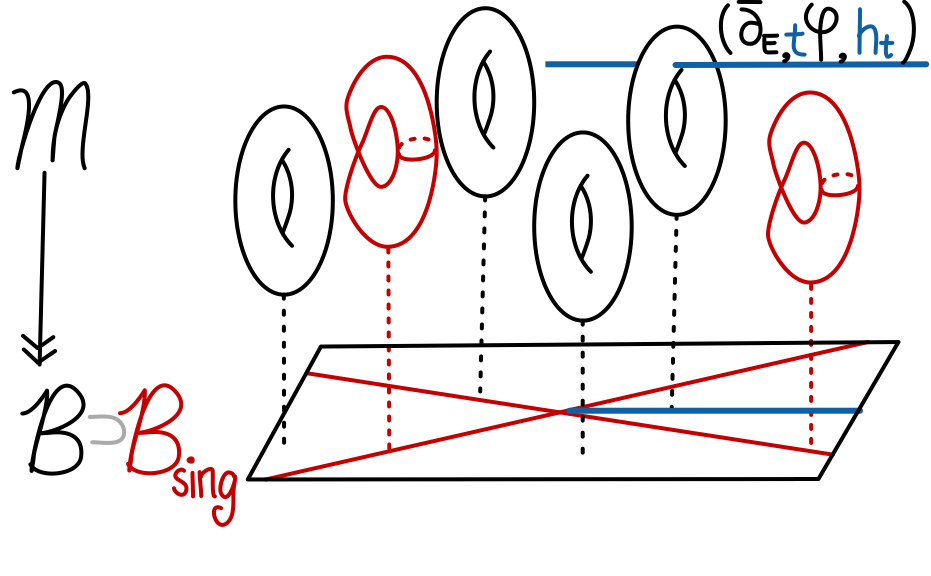}\\
  \caption{\label{fig:path} \emph{
A $\R^{+}$-family of Higgs bundles approaching
the ($t\!=\!\infty$)-ends of the Hitchin moduli space.}}
   \end{centering}
\end{figure}

The strategy of the proof outlined above is the same as the strategy in the $n=2$ case appearing in \cite{MSWW14, MSWW15}.
We highlight some  notable differences.
First, in the $n=2$ case, the singular hermitian metric $h_\ell$ could be desingularized
using a single $2 \times 2$ model solution. In the rank $n$ case,
we need $K \times K$ model solutions for $K=2, \cdots, n$ to desingularize $h_\ell$.
We discuss these model solutions in \S\ref{sec:MK1}.
Secondly, the proof that the inverse of the linearized operator is bounded (Proposition \ref{prop:boundoninverse})
requires substantial modification from \cite{MSWW14}. (In Remark \ref{rem:domaindecomp}, we make a lengthy remark about why Mazzeo-Swoboda-Weiss-Witt's method does not work if $n > 2$.)

\begin{rem} \label{rem:metric}
Note that in both Mazzeo-Swoboda-Weiss-Witt's proof in \cite{MSWW14, MSWW15} and the one here,
we use the fact that Hitchin's equations are conformal.  After fixing a polystable Higgs bundle $(\delbar_E, \varphi)$ in a non-degenerate fiber of $\cM \to \cB$, we take a conformal metric $g_C'$
on $C$ which is flat on disks around the zeros of the discriminant section $\Delta_\varphi$ defined in \eqref{eq:discriminant}. The convenience of the metric $g_C'$ will be discussed further in \S\ref{sec:approximatesolutions}.
\end{rem}
\subsection{Acknowledgements}

Many thanks to Rafe Mazzeo and Andy Neitzke for helpful discussions. 

\section{Higgs bundles in \texorpdfstring{$\cM'$}{M'}} \label{sec:simple}

\subsection{Review of Hitchin fibration}

The $SU(n)$-Hitchin moduli space $\cM$ is a complex integrable system with half-dimensional base $\mathcal{B}$. The Hitchin fibration is
\begin{eqnarray}
 \mathrm{Hit}:  \qquad   \cM  \quad & \rightarrow & \cB\\ \nonumber
 (\delbar_E, \varphi, h) &\mapsto& \mathrm{char}_\varphi(\lambda),
\end{eqnarray}
where $\mathrm{char}_\varphi(\lambda)$ is the characteristic polynomial of $\varphi \in \Omega^{1,0}(C, \End E)$. The Hitchin base $\mathcal{B}$ can be identified with the complex vector space $\oplus_{i=2}^n H^0(C, K_C^i) \ni \mathbf{b}=(q_2, \cdots, q_n)$, under the map from $\mathrm{char}_\varphi (\lambda)$ to its coefficients
\begin{equation}
 \mathrm{char}_\varphi (\lambda) = \lambda^n + q_2 \lambda^{n-2} + \cdots + q_{n-1} \lambda + q_n.
\end{equation}
A point $\mathbf{b} \in \cB$ encodes the eigenvalues of $\varphi$. We can geometrically
package the eigenvalues as a ramified $n\!:\!1$-cover
cut out of the total space of holomorphic cotangent bundle $K_C \rightarrow C$ by the equation
\begin{equation}\label{eq:spectralcover}
 \Sigma 
=\{ \lambda \in K_C: \mathrm{char}_\varphi(\lambda) = 0 \}.
\end{equation}
Call $\Sigma \overset{\pi}{\rightarrow} C$ the \emph{spectral cover}.

\medskip

The fiber $\mathrm{Hit}^{-1}(\mathbf{b})$ is a compact abelian variety
if, and only if, the spectral curve $\Sigma_{\mathbf{b}}$ is smooth.
Let $\cB'$ be this locus where the spectral cover $\Sigma_{\mathbf{b}}$ is smooth.
We
restrict our attention to Higgs bundles in the \emph{regular locus} $\cM'=\mathrm{Hit}^{-1}(\cB')$, and call such Higgs bundles \emph{regular.}

\bigskip
Given $(\delbar_E, \varphi)$, the discriminant section $\Delta_{\varphi}$  is
\begin{eqnarray} \label{eq:discriminant}
\Delta_\varphi: C &\rightarrow& K_C^{n^2-n}\\ \nonumber
p &\mapsto& \prod_{1 \leq i<j \leq n} (\lambda_i(p) - \lambda_j(p))^2.
\end{eqnarray}
As shown in Figure \ref{fig:ram},
the associated spectral curve $\Sigma \overset{\pi}{\rightarrow} C$ is ramified at the zeros of the 
discriminant section $Z=\Delta^{-1}_\varphi(0) \subset C$.
\begin{figure}[ht]
 \begin{center}
  \includegraphics[height=.75in]{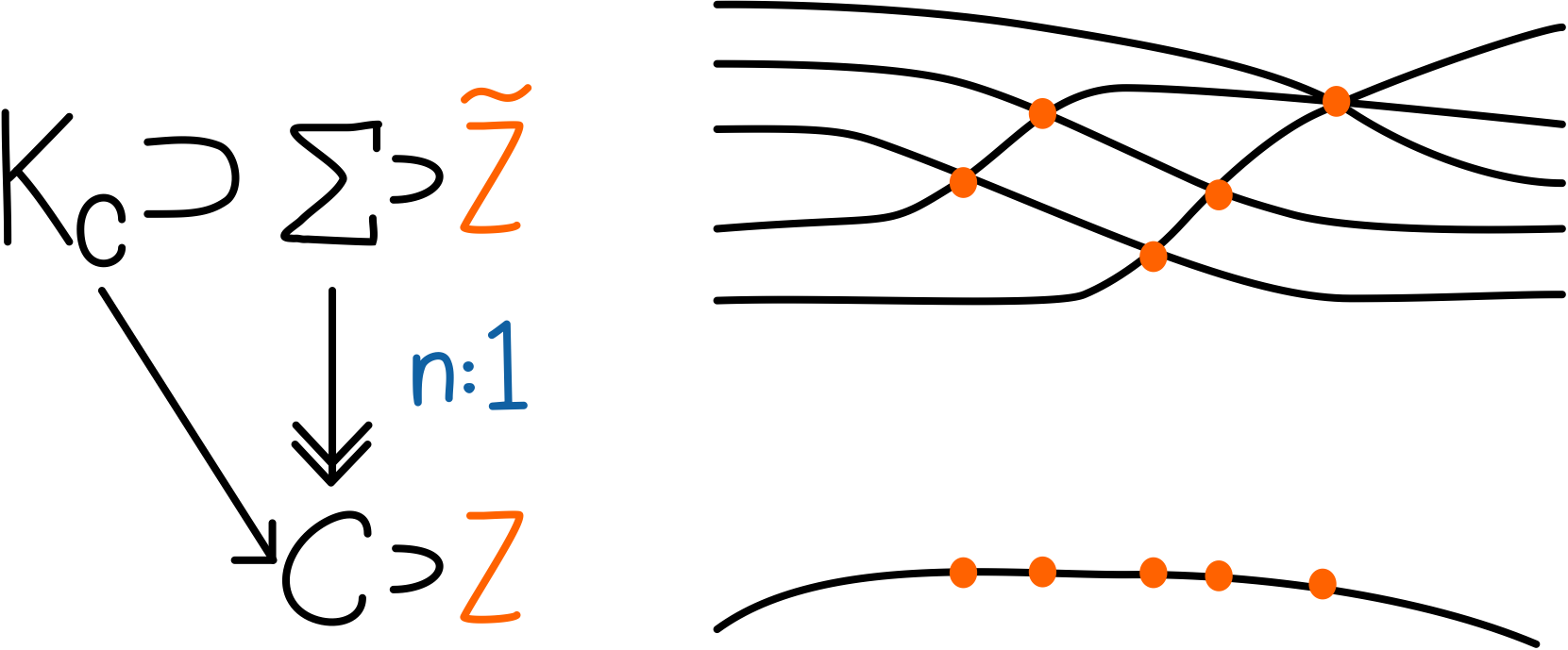}
  \caption{\label{fig:ram}The spectral cover $\Sigma$ is an $n:1$ cover of $C$, ramified at $Z$.}
 \end{center}
\end{figure}
Given $(\delbar_E, \varphi) \in \cM'$,
the map $\pi: \Sigma \rightarrow \C$ restricted to a neighborhood of a point $\widetilde{p} \in \widetilde{Z}$ looks like
\begin{eqnarray}
 \pi: \widetilde{\bD} &\rightarrow & \bD\\ \nonumber 
 w &\mapsto& w^K =z.
\end{eqnarray}
(This is indeed a smooth curve by the Jacobi criterion. The curve is the zero set of $f(w,z):=w^K-z$, and $\nabla f$ does not vanish at the point $(0,0)$.)
The point $\widetilde{p} \in \widetilde{Z}$ contributes a zero of order $K-1$ to $\Delta_\varphi$.  

\begin{rem}
 In the case where $n=2$,
a $SL(2,\C)$-Higgs bundle $(\delbar_E, \varphi)$ is in $\cM'$ if, and only
if, $\Delta_\varphi =-4\det \varphi$ has only simple zeros. Note that for $n>2$, the space of regular Higgs bundles is slightly larger than the space of Higgs bundles for which the discriminant section $\Delta_\varphi$ has only simple zeros.
\end{rem}

\subsection{Local model near a ramification point for a Higgs bundle \texorpdfstring{$(\delbar_E, \varphi) \in \cM'$}{}} \label{sec:Higgslocalmodel}

The next proposition gives a local model around ramification points $p \in Z$ for regular Higgs bundles.

\begin{figure}[ht]
\begin{centering}
\includegraphics[height=1.2in]{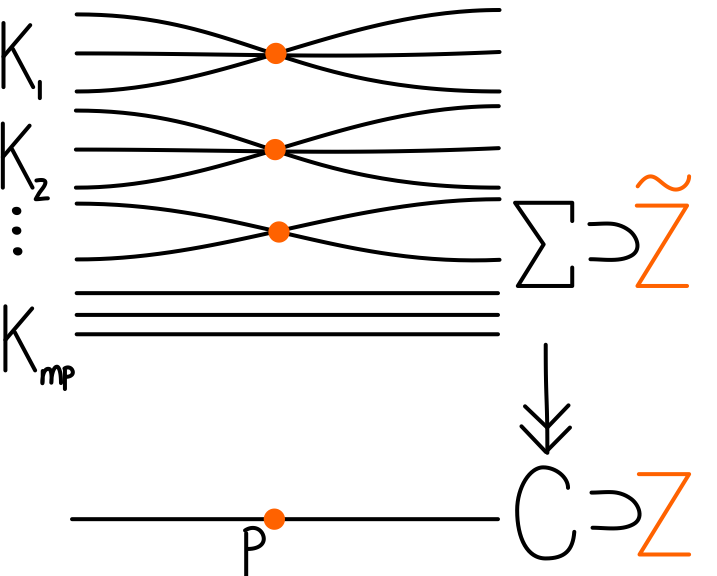}
\caption{\label{fig:model}
In the disk around $p \in Z$, we have $n=11$ and $K_1=K_2=3, K_3=2, K_4=K_5=K_6=1$.
}
\end{centering}
\end{figure}

\begin{prop}\label{prop:reghol}
(Local model for $(\delbar_E, \varphi)$ around ramification points)
Let $(\delbar_E, \varphi)$ be a polystable regular Higgs bundle.
Let $p \in Z \subset C$ be a ramification point.
Then, there are: a partition of $n$ as $n=K_1 +  \cdots + K_{m_p}$,
 local coordinates $z_1, \cdots, z_{m_p}$ centered at $p$,
and a local holomorphic trivialization of $E$ over a disk $\bD$ centered at $p$ such that 
  \begin{eqnarray} \label{eq:reghol}
   \delbar_E&=& \delbar\\ \nonumber
   \varphi &=& \bigoplus_{j=1}^{m_p} \left(\lambda_{(j)}\mathbf{1}_{K_j} + \begin{pmatrix}
               0 & 1 & &\\ & 0&  \ddots & \\
                &  & \ddots& 1\\
                z_j & & & 0
              \end{pmatrix}_{K_j \times K_j} \hspace{-.4in} \mathrm{d} z_j \hspace{.2in} \right).
  \end{eqnarray}
 Here, $\{\lambda_1, \cdots, \lambda_n\}$ are the eigenvalues of $\varphi$, and $\lambda_{(j)}$ is the average of the cluster of $K_j$ eigenvalues
  \begin{equation}
   \lambda_{(j)} = \sum_{k=s_{j-1} + 1}^{s_j} \lambda_k \qquad \mbox{where }s_i = \sum_{q=1}^i K_q.
  \end{equation}
\end{prop}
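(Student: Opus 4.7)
The plan is to unwind the Beauville--Narasimhan--Ramanan spectral correspondence locally near $p$. Since $(\delbar_E,\varphi)$ is polystable and regular, it is recovered from a holomorphic line bundle $\cL$ on the smooth spectral curve $\Sigma$ via $E=\pi_*\cL$, with $\varphi$ induced by multiplication by the tautological section $\lambda$ of $\pi^*K_C|_\Sigma$. My task is to unwind this correspondence above a small disk $\bD\ni p$ and produce an explicit local trivialization in which $\varphi$ takes the stated form.

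First I would produce the block decomposition. Smoothness of $\Sigma$ guarantees that for $\bD$ sufficiently small, $\pi^{-1}(\bD)=\bigsqcup_{j=1}^{m_p}\widetilde{\bD}_j$, where each $\widetilde{\bD}_j\subset\Sigma$ is a small disk containing exactly one preimage $\widetilde{p}_j$ of $p$ with ramification index $K_j$. Pushforward along this disjoint union yields $E|_{\bD}=\bigoplus_j E_j$ with $E_j:=\pi_*(\cL|_{\widetilde{\bD}_j})$ a $\varphi$-invariant holomorphic subbundle of rank $K_j$, and automatically $\sum_j K_j=n$.

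Next, on each $\widetilde{\bD}_j$ I would pick a uniformizer $w_j$ and a coordinate $z_j$ on $\bD$ centered at $p$ so that the projection reads $w_j\mapsto w_j^{K_j}=z_j$; this is possible because any local coordinate on $\bD$ restricts to a function on the smooth disk $\widetilde{\bD}_j$ vanishing to order exactly $K_j$ at $\widetilde{p}_j$, and a chosen $K_j$-th root is a uniformizer. Trivialize $\cL|_{\widetilde{\bD}_j}$ by a nowhere-vanishing $\sigma_j$; the set $\{\sigma_j,w_j\sigma_j,\ldots,w_j^{K_j-1}\sigma_j\}$ is then a free $\cO_\bD$-basis of $E_j$ (one reduces higher powers using $w_j^{K_j}=z_j$). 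In this basis, $\varphi|_{E_j}$ is multiplication by $\lambda|_{\widetilde{\bD}_j}=a(w_j)\,dz_j$; writing $a(w_j)$ as the sum of its $K_j$-monodromy-symmetric (base-pulled-back) part, identified with $\lambda_{(j)}/dz_j$ and equal to the average of the corresponding cluster of eigenvalues, plus the residual term $w_j$, and reordering the basis as $\{w_j^{K_j-1}\sigma_j,\ldots,\sigma_j\}$, multiplication by $w_j$ picks up $1$'s on the superdiagonal together with $z_j$ in the bottom-left corner (from $w_j\cdot w_j^{K_j-1}=z_j$), yielding exactly the claimed $\lambda_{(j)}\mathbf{1}_{K_j}+M_j\,dz_j$.

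The main technical obstacle is the normalization $a(w_j)=\lambda_{(j)}/dz_j+w_j$. A priori $a(w_j)=a_0+a_1 w_j+a_2 w_j^2+\cdots$ is an arbitrary holomorphic function on $\widetilde{\bD}_j$, and reducing it to the linear form requires the simultaneous adjustment of the coordinate $z_j$, of the $K_j$-th-root choice defining $w_j$, and of the trivialization $\sigma_j\mapsto u(w_j)\sigma_j$ of $\cL|_{\widetilde{\bD}_j}$. In the $n=2$ case treated in \cite{MSWW14} this is the classical straightening of a quadratic differential near a simple zero; for general $K_j$ the analogous combination of reparametrizations, matching the degrees of freedom of the local cyclic companion-matrix normal form, plays the same role. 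Assembling the blocks for $j=1,\ldots,m_p$ completes the proof.
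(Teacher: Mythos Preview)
Your approach is essentially the paper's: decompose $E|_\bD$ into $\varphi$-invariant summands $E_j$ via the connected components of $\pi^{-1}(\bD)$, then seek a coordinate $z_j$ and a frame putting each block into cyclic companion form. Your pushforward basis $\{w_j^{K_j-1}\sigma_j,\ldots,\sigma_j\}$ is exactly the frame the paper builds, though the paper arrives at it less directly by first taking eigensections $s_k$ on the cover and then forming the $\Z_{K_j}$-invariant combinations $s'_i$ that descend.

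The gap you flag is real, and your proposed fix does not close it. Two of your three degrees of freedom are illusory: the $K_j$-th-root choice is discrete, and rescaling $\sigma_j\mapsto u(w_j)\sigma_j$ leaves the matrix of multiplication by $a(w_j)$ unchanged (multiplication by $a$ commutes with multiplication by $u$). That leaves only the reparametrization of $z_j$, a single function. But the characteristic polynomial of the block is a gauge invariant, and the target form forces it to equal $(\eta-\lambda_{(j)})^{K_j}-z_j\,dz_j^{K_j}$; for $K_j\ge 3$ this imposes $K_j-2$ vanishing conditions on the intermediate coefficients of the shifted characteristic polynomial that a single coordinate change cannot enforce. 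For instance, take $K_j=3$ with local characteristic polynomial $x^3-3zx-z$ (smooth spectral curve at the origin, ramification index $3$, $\lambda_{(j)}=0$): the coefficient of $x$ survives in every holomorphic coordinate. The paper's proof carries the identical gap --- it asserts the normalization $\prod_k(x+\lambda_{(j)}-\lambda_k)=x^{K_j}-z_j\,dz_j^{K_j}$ and justifies it only for $K_j=2$, where it is the standard local normal form of a quadratic differential at a simple zero.
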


\begin{rem}
 For the $SL(2,\C)$ case, see \cite[Lemma 4.2]{MSWW14} which is considerably simpler and features an explicit gauge transformation. It is difficult to write such an explicit gauge transformation for arbitrary rank.
\end{rem}

\begin{proof}Take a disk $\bD$ centered at $p$ without additional ramification points.
Partition the eigenvalues by the value at $p$, and call these distinct values $\lambda_{(j)}(p)$. Let $K_j$ be the associated cluster size.
Because $\Sigma$ is smooth, there is exactly one sheet of $\Sigma$ going through $\lambda_{(j)}(p)$, and the spectral curve $\pi: \Sigma \rightarrow C$ through $\lambda_{(j)}(p)$ is locally given by 
\begin{eqnarray}
 \pi_j: \widetilde{\bD}_j &\rightarrow& \bD\\ \nonumber 
 w_j &\mapsto& w_j^{K_j}=z_j 
\end{eqnarray}
for some local holomorphic function $z_j$.
We can arrange that $z_j$ satisfies 
\begin{equation} 
\prod_{k=s_{j-1} + 1}^{s_j} (x + \lambda_{(j)} -\lambda_k ) = x^{K_j} - z_j \de z_j^{K_j}.
\end{equation}
(In the case $K=2$, this is equivalent to the standard argument (e.g. \cite[p. 216]{localholcoord}) showing that 
 that there is a local holomorphic
coordinate $z_i$ centered at $p$ such that 
\begin{equation}  \left(\lambda_{1}- \lambda_{2}\right)^2=4z_j dz_j^2.)\end{equation}

We can work locally with each cluster of size $K_j$, and for convenience
we may shift the eigenvalues so that $\lambda_{(j)}=0$; to avoid notational clutter, we drop all the indices $j$ related to cluster number, and number the eigenvalues $\lambda_1, \cdots, \lambda_{K}$.
The associated eigenvalues $\lambda_1, \cdots, \lambda_{K}$---or more precisely their pullbacks $\pi^*\lambda_i$---are single-valued
on the ramified $K:1$ local cover $\widetilde{\bD}$.
Order them so that $\lambda_j = \e^{\frac{2 \pi \I (j-1)}{K}} w \de (w^K)$.
Define
\begin{eqnarray}
 \sigma: \widetilde{\bD} &\rightarrow & \widetilde{\bD}\\ \nonumber 
 w &\mapsto& \e^{2\pi \I/K} w.
\end{eqnarray}
The cyclic group $\Z_{K}=\IP{\sigma}$ acts on $\widetilde{\bD}$, exchanging the sheets of $\pi: \widetilde{\bD} \to \bD$. Note that 
$\lambda_i =(\sigma^{i-1})^* \lambda_{1}$. 

Because the spectral cover $\Sigma$  is smooth, the associated
rank 1, locally-free, torsion-free sheaf $\cL \rightarrow \Sigma$
is actually a line bundle.
Thus, choose $s_1$ a smooth non-vanishing holomorphic section of the
eigenline associated to $\lambda_{1}$.
Define $s_{i} = (\sigma^{i-1})^* s_1$ and note that in the basis $s_{i}$ of $\pi^* \cE$, $\pi^* \varphi$ acts by multiplication by $\lambda_i$.
The basis elements $\{s_i\}$ do not descend from $\widetilde{\bD}$ to $\bD$, but the following basis elements satisfy $\sigma^*s_i'=s_i'$, and hence descend.
\begin{eqnarray} \label{eq:basiss}
 s'_1&=& \frac{1}{K} \sum_{i=1}^K s_i\\ \nonumber 
  s'_i &=& \frac{1}{K w^K (\de (w^K))^{K-(i-1)}} \sum_{k=1}^K \lambda_k^{K-(i-1)} s_k \qquad i=2, \cdots, K
\end{eqnarray}
Note that $s'_{i}$ is nonsingular and non-vanishing at $w=0$. 
In this basis,
\begin{equation}\label{eq:Higgs3}
\pi^* \varphi(s_1') = w^K \de (w^K) \varphi(s_K'), \qquad 
\pi^*\varphi(s_i') = \de (w^K) \varphi(s_{i-1}') \quad \mbox{for }i=2, \cdots, K.
\end{equation}
Define the basis $\e_i$ by $\pi^*\e_i = s_i'$. In this holomorphic basis the $K \times K$ block of $\varphi$ is 
\begin{equation}
\begin{pmatrix}
               0 & 1 & &\\ & 0&  \ddots & \\
                &  & \ddots& 1\\
                z & & & 0
              \end{pmatrix} \mathrm{d} z.
\end{equation}
If the average of the eigenvalues $\lambda_{(j)} \neq 0$, then we simply add $\lambda_{(j)} \mathbf{1}_K$, as claimed in \eqref{eq:reghol}.
Note that this holomorphic gauge is not unique since the section $s_1$ can be multiplied by any non-vanishing holomorphic function $f$.

In a block where $K=1$, the associated eigenvalue $\lambda$ is not ramified,
so we simply choose $\e$ to be a smooth section of the associated eigenline over the base $\bD$.
\end{proof}

\begin{rem}
 The sections $s'_i$ appearing in \eqref{eq:basiss} accomplish something slightly subtle. In the case $K=2$, Proposition \ref{prop:reghol} produces the following basis and sections:
 \begin{equation*}
  \varphi = \begin{pmatrix} 0 & 1 \\ z & 0 \end{pmatrix} \de z, \quad  s_1 = \pi^*\begin{pmatrix} 1\\ \sqrt{z} \end{pmatrix}, \lambda_1 =\pi^*( \sqrt{z} \de z),  \quad  s_2 = \pi^*\begin{pmatrix} 1\\ -\sqrt{z} \end{pmatrix}, \lambda_2 = \pi^*(-\sqrt{z} \de z).
 \end{equation*}
 In particular, note that $s_1$ and $s_2$ become linearly dependent at $w=0$. Despite this, the sections 
\begin{equation}
 s'_1 = \pi^*\begin{pmatrix} 1\\ 0  \end{pmatrix} \qquad s'_2 =\pi^* \begin{pmatrix} 0 \\ 1 \end{pmatrix}
\end{equation}
are linearly independent---even at $w=0$.
\end{rem}

\begin{rem} \label{rem:eps}
By shrinking $\bD$, we may assume that the disks around different points of $Z$
do not intersect. Shrinking
$\bD$ further, we may assume that the difference between the eigenvalues of $\varphi$ is
bounded below by some positive constant $\eps_\lambda>0$ on $C-\bigcup\limits_{p \in Z} \bD_p$.
By possibly taking a smaller $\eps_\lambda$, we may assume that \emph{on $\bD$}, the difference between the 
averaged-eigenvalues $\lambda_{(j)}$ 
are bounded below by $\eps_\lambda$.
By rescaling the Riemannian metric on $g_C$,
we may assume that each disk $\bD_p$ centered at $p$ has radius one.
\end{rem}

As shown in Figure \ref{fig:separate}, Proposition 2.5 gives a local
model only when the ramification points all lie above the same point.
Deforming this, we can also give a local model when the ramification points
lie above points that are nearby. This is the content of Corollary \ref{cor:goodgaugehol},
a direct corollary of the proof of Proposition \ref{prop:reghol}.
\begin{figure}[h!]
\begin{centering}
\includegraphics[height=1in]{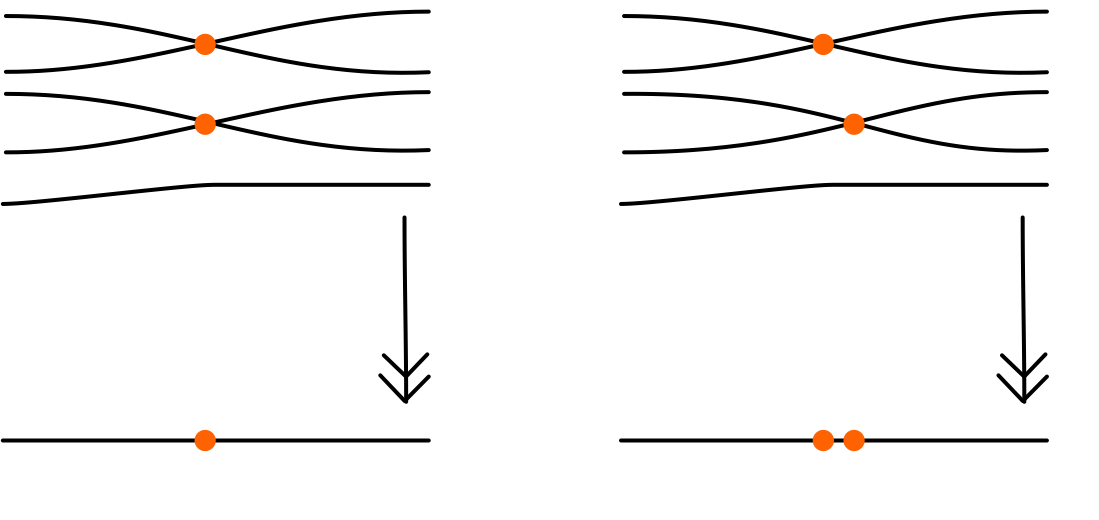}
\caption{\label{fig:separate}
Proposition \ref{prop:reghol} gives a local model
for local spectral covers corresponding to the left figure.  Corollary \ref{cor:goodgaugehol} 
gives a similar local model for the right figure.
}
\end{centering}
\end{figure}

\begin{cor}
\label{cor:goodgaugehol}
Let $(\delbar_E, \varphi)$ be a polystable regular Higgs bundle.
Let $\bD$ be an open neighborhood over which the spectral cover has $m_p$ connected components, each containing at most one point of $\widetilde{Z}$. 
Then, there are: a partition of $n$ as $n=K_1 +  \cdots + K_{m_p}$,
 local coordinates $z_1, \cdots, z_{m_p}$ with $z_i$ centered at $\pi(\widetilde{p}_i)$,
and a local holomorphic trivialization of $E$ over a disk $\bD$ centered at $p$ such that 
  \begin{eqnarray}
   \delbar_E&=& \delbar\\ \nonumber
   \varphi &=& \bigoplus_{j=1}^{m_p} \left(\lambda_{(j)}\mathbf{1}_{K_j} + \begin{pmatrix}
               0 & 1 & &\\ & 0&  \ddots & \\
                &  & \ddots& 1\\
                z_j & & & 0
              \end{pmatrix}_{K_j \times K_j} \hspace{-.4in} \mathrm{d} z_j \hspace{.2in} \right).
  \end{eqnarray}
\end{cor}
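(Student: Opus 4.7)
The plan is to apply the construction from the proof of Proposition \ref{prop:reghol} one connected component at a time. The hypothesis that $\pi^{-1}(\bD)$ has $m_p$ connected components, each meeting $\widetilde{Z}$ in at most one point, means that $E|_\bD$ decomposes holomorphically as a direct sum of $\varphi$-invariant subbundles. Indeed, writing $U_1, \ldots, U_{m_p}$ for the connected components of $\pi^{-1}(\bD)$, the spectral correspondence gives $E|_\bD = \pi_*(\cL|_{\pi^{-1}(\bD)}) = \bigoplus_{j=1}^{m_p} \pi_*(\cL|_{U_j})$, with the $j$th summand of rank $K_j$ equal to the number of sheets of $\pi$ in $U_j$, thereby yielding the partition $n = K_1 + \cdots + K_{m_p}$.

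For each $j$ with $K_j > 1$, the unique ramification point $\widetilde{p}_j \in U_j \cap \widetilde{Z}$ provides a local description $\pi: w_j \mapsto w_j^{K_j} = z_j$, where $z_j$ is a holomorphic coordinate on $\bD$ centered at $\pi(\widetilde{p}_j)$ (rather than at the center of $\bD$, but this is of no consequence). Repeating the construction from the proof of Proposition \ref{prop:reghol} within $U_j$---pick a non-vanishing holomorphic section $s_1$ of $\cL$ on $U_j$, generate $s_i = (\sigma_j^{i-1})^* s_1$ using the cyclic deck action $\sigma_j$ of order $K_j$, and form the $\sigma_j$-invariant combinations $s'_i$ as in \eqref{eq:basiss}---produces a holomorphic frame of the $j$th summand in which the $K_j \times K_j$ block of $\varphi$ takes the companion-matrix form $\lambda_{(j)} \mathbf{1}_{K_j} + N(z_j)\,\de z_j$ displayed in the statement. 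For each $j$ with $K_j = 1$ the component $U_j$ is unramified, so $\lambda_{(j)}$ is single-valued and holomorphic on $\bD$, and any non-vanishing holomorphic section of the corresponding eigenline furnishes a trivial $1 \times 1$ block, exactly as in the last paragraph of the proof of Proposition \ref{prop:reghol}.

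The only point to verify is that nothing in the proof of Proposition \ref{prop:reghol} actually required $\pi(\widetilde{p})$ to coincide with the basepoint of the disk $\bD$; the argument is purely local around the ramification point, depending only on the holomorphic coordinate $z_j$ centered at $\pi(\widetilde{p}_j)$ together with the $K_j$-fold cyclic symmetry of the cover. Since the components $U_j$ are disjoint, the per-component constructions are independent, and concatenating the resulting local holomorphic bases yields the desired global holomorphic trivialization of $E|_\bD$ in which $\varphi$ takes the claimed block-diagonal form.
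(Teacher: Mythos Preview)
Your proposal is correct and matches the paper's approach: the paper gives no separate proof, stating only that this is ``a direct corollary of the proof of Proposition \ref{prop:reghol},'' and you have simply spelled out that argument---decompose $E|_\bD$ via the connected components of $\pi^{-1}(\bD)$ and rerun the per-block construction, noting that nothing there required the ramification point to sit over the center of $\bD$.
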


\subsection{Stratification of \texorpdfstring{$\cM'$}{M'}} \label{sec:stratification}

Given a Higgs bundle $(\delbar_E, \varphi) \in \cM'$, we get a collection 
of partitions $n=K_{1,p} + K_{2,p} + \cdots + K_{m_p,p}$ labeled
by $p \in Z$.  For $K=2, 3, \ldots, n$ define
\begin{equation}N_K=\# \{(p,i) \in Z \times \mathbb{N}:   K_{p,i} =K\}.
\end{equation}
(Since $\Delta_\varphi$ is a section of $K_C^{n^2-n}$,  
\begin{equation}
 \sum_{K=2}^n (K-1) N_K = 2(n^2-n)(g-1).)
\end{equation}
This gives us a map onto a discrete space:
\begin{eqnarray}
 \Xi: \cM' &\rightarrow& \mathbb{N}^{n-1}\\ \nonumber
 (\delbar_E, \varphi) &\mapsto& (N_2, \cdots, N_n).
\end{eqnarray}
The map $\Xi$ gives us a stratification of $\cM'$.

\section{Limiting configurations}\label{sec:limitingconfigurations}

One of the salient properties of the limiting metric $h_\infty=\lim_{t \to \infty} h_t$ is that it
solves 
the ``decoupled $SU(n)$-Hitchin's equations'' by \cite[Theorem 2.7]{Mochizuki:2015aa}.
\begin{defn} \label{defn:limitingconfiguration}
Given a polystable Higgs bundle $(\delbar_E, \varphi) \in \cM$, a hermitian metric $h$ solves the \emph{decoupled $SU(n)$-Hitchin's equations} if
\begin{equation}
 [\varphi, \varphi^{\dagger_{h}}]=0, \qquad F_{D(\delbar_E, h)}^\perp =0.
\end{equation}
and $\det h=h_{\Det E}$.
\end{defn}
Fix a polystable regular Higgs field $(\delbar_E, \varphi)$. 
In this section, we construct a metric $h_\ell$ solving the
decoupled $SU(n)$-Hitchin's equations. 
It is worth emphasizing that there are many solutions of the decoupled $SU(n)$-Hitchin's equations, and each of these solutions 
depends on a choice of parabolic weights (Remark \ref{rem:weights}).
We make the ``correct'' choice of parabolic weights in our construction, though
this is only justified \emph{a posteriori} in Corollary \ref{cor:main} when we prove that
$h_\ell=h_\infty$. The subscript $\ell$ is used for ``limiting.''

\subsection{Construction of limiting metrics}\label{sec:limitingconstruction}

Given a polystable regular Higgs field $(\delbar_E, \varphi)$, Construction \ref{construction:limiting}
produces a singular hermitian metric $h_\ell$, 
unique up to rescaling by a constant.
This metric
arises as the pushforward of the Hermitian-Einstein metric on the associated spectral
line bundle $\cL \rightarrow \Sigma$ equipped with a specific parabolic structure.  
By Proposition \ref{prop:limitingconstruction},
the triple $(\delbar, \varphi, h_\ell)$ 
solves the decoupled $SU(n)$-Hitchin's equations---possible after some constant rescaling of $h_\ell$. As mentioned above, in Corollary \ref{cor:main}, we will prove that 
$\lim_{t \to \infty} h_t = h_\ell$. Thus, we call this particular
triple which solves the decoupled $SU(n)$-Hitchin's equations a \emph{limiting configuration}.

\begin{figure}[h!]
\begin{centering}
\includegraphics[height=1.2in]{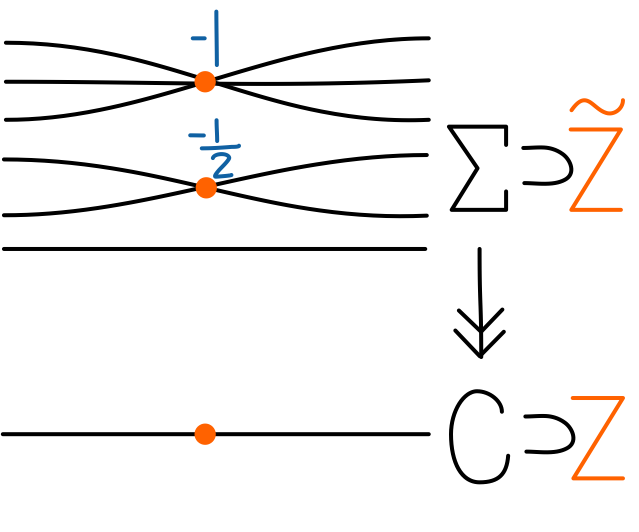}\\
\vspace{-.2in}
\caption{\label{fig:hl}
At a point $\widetilde{p} \in \widetilde{Z}$ where the spectral cover is locally $K:1$, put parabolic weight $\frac{1-K}{2}$.
}
\end{centering}
\end{figure}

\begin{construction}\label{construction:limiting}
\noindent Given $(\delbar_E,  \varphi)$ a regular polystable Higgs bundle,
let $\cL \rightarrow \Sigma$ be the associated
spectral data.

\begin{itemize}
\item Equip the holomorphic line bundle $\cL \rightarrow \Sigma$ with parabolic structure: 
\emph{At each point $\widetilde{p}_j\in \widetilde{Z}$ add the parabolic weight $\frac{1-K_j}{2}$ (as shown in Figure \ref{fig:hl}). 
}
\medskip
\item Equip the parabolic line bundle $\cL \rightarrow \Sigma$ with a hermitian structure: 
\emph{For parabolic line bundles---such as $\cL$---
there is a Hermitian-Einstein metric adapted to the 
parabolic structure\cite{simpsonnoncompact,biquardparabolic}\footnote{Technically,
the result in \cite{biquardparabolic} is only for parabolic line bundles of parabolic degree $0$, however, it is straightforward to extend the results to arbitrary degree.}.
The Hermitian-Einstein metric solves
 \begin{equation}\label{eq:parabolichermitianeinstein}
 F_{\cL} = -2 \pi \sqrt{-1} \frac{\mathrm{pdeg} \;\cL }{\rk\; \cL } 
  \frac{ \pi^*\omega_C}{\mathrm{vol}_{\pi^*{g_C}}(\Sigma)}\Id_\cL .
\end{equation}
and is unique up to rescaling by a constant.
}
\medskip
\item Define $h_\ell$ on $E|_{C-Z}$ 
from the orthogonal push-forward
of the Hermitian-Einstein metric $h_{\cL}$ on $\cL \rightarrow \Sigma$.
I.e. decompose $E$ into eigenspaces of $\varphi$;
these eigenspaces are orthogonal with respect to $h_\ell$; on each eigenspace
$h_\ell$ agrees with the metric induced by $h_{\cL}$.
\end{itemize} 
\end{construction}

\begin{prop}\label{prop:limitingconstruction}
Given a polystable regular Higgs bundle,
Construction \ref{construction:limiting} produces a unique
hermitian metric $h_\ell$ solving the $SU(n)$-decoupled Hitchin's equations.
\end{prop}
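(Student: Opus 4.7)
The plan is to work on the spectral cover $\Sigma$ and exploit that, away from $\widetilde{Z}$, the map $\pi\colon \Sigma \to C$ is an unramified $n{:}1$ cover and the standard spectral correspondence gives a canonical identification $E|_{C-Z}\cong \pi_*\cL|_{C-Z}$, intertwining the $\varphi$-eigenline decomposition of $E$ with the pushforward decomposition. By Construction \ref{construction:limiting}, $h_\ell$ on $E|_{C-Z}$ is the metric induced by $h_{\cL}$ together with the declaration that distinct eigenspaces are orthogonal. With this setup, the three parts of the decoupled $SU(n)$-Hitchin's equations become three things to verify on the cover.

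For $[\varphi,\varphi^{\dagger_{h_\ell}}]=0$: on $C-Z$ the $\varphi$-eigenspaces are $1$-dimensional and pairwise $h_\ell$-orthogonal by construction, so $\varphi^{\dagger_{h_\ell}}$ acts on the eigenline for $\lambda_i$ by $\overline{\lambda_i}$. Hence $\varphi$ and $\varphi^{\dagger_{h_\ell}}$ are simultaneously diagonal and commute. For the Hermitian-Einstein equation, the Chern connection $D(\delbar_E,h_\ell)$ pulls back on $\Sigma - \widetilde Z$ to a direct sum, over the local sheets, of the Chern connection $D(\delbar_\cL,h_\cL)$, so
\[ \pi^*F_{D(\delbar_E,h_\ell)} \;=\; F_{D(\delbar_\cL, h_\cL)}\cdot \Id_{\pi^*E}. \]
Using \eqref{eq:parabolichermitianeinstein} with $\rk\cL =1$ and $\mathrm{vol}_{\pi^*g_C}(\Sigma)=n\cdot\mathrm{vol}_{g_C}(C)$, this descends to
\[ F_{D(\delbar_E,h_\ell)} \;=\; -2\pi\sqrt{-1}\,\frac{\mathrm{pdeg}\,\cL}{n\,\mathrm{vol}_{g_C}(C)}\,\omega_C\cdot \Id_E, \]
so $F_{D(\delbar_E,h_\ell)}^\perp=0$ reduces to the numerical identity $\mathrm{pdeg}\,\cL=\deg E$. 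This is where the choice of parabolic weights in Construction \ref{construction:limiting} is forced: the Grothendieck--Riemann--Roch / Riemann--Hurwitz computation for $E=\pi_*\cL$ gives $\deg\cL = \deg E + \tfrac{1}{2}\sum_j(K_j-1)$, and the parabolic weights $\tfrac{1-K_j}{2}$ contribute $\sum_j \tfrac{1-K_j}{2}$ to the parabolic degree, so the two corrections cancel and $\mathrm{pdeg}\,\cL=\deg E$ exactly.

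For uniqueness and the determinant normalization $\det h_\ell = h_{\Det E}$: the cited parabolic Hermitian--Einstein theorem \cite{simpsonnoncompact,biquardparabolic} produces $h_\cL$ uniquely up to a positive multiplicative constant $c>0$. Rescaling $h_\cL\mapsto c\, h_\cL$ rescales $h_\ell\mapsto c\, h_\ell$ and hence $\det h_\ell\mapsto c^n \det h_\ell$. Since the holomorphic structure on $\Det E$ determined by $h_\ell$ agrees with $\delbar_{\Det E}$ (both sides are defined from $(\delbar_E,\varphi)$ and its spectral data), and since Hermitian--Einstein metrics on a fixed holomorphic line bundle are unique up to scale, there is a unique choice of $c$ yielding $\det h_\ell = h_{\Det E}$.

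The main obstacle is the parabolic-degree bookkeeping in the middle paragraph: one must be careful that the contribution of each $\widetilde p_j$ to $\deg\cL - \deg E$ from Grothendieck--Riemann--Roch is exactly $\tfrac{K_j-1}{2}$, which motivates the specific weight $\tfrac{1-K_j}{2}$. The identifications of connections and curvatures across $\pi$ are routine on $C-Z$, but one should note that $h_\ell$ is only defined on $C-Z$ (it is the singular limiting metric), so all equations are understood on this open set; the behavior of $h_\ell$ at $Z$ is encoded by the parabolic weights and will be the starting point for the desingularization via model solutions in \S\ref{sec:modelsolutions}.
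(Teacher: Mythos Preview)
Your proof is correct and follows essentially the same route as the paper: both verify $[\varphi,\varphi^{\dagger_{h_\ell}}]=0$ via simultaneous diagonalization, reduce $F^\perp_{D(\delbar_E,h_\ell)}=0$ to the numerical identity $\mathrm{pdeg}\,\cL=\deg E$, check that identity by balancing the ramification contribution $\deg\cL-\deg E=\tfrac12\sum_j(K_j-1)$ against the parabolic weights $\tfrac{1-K_j}{2}$, and then fix the scaling constant via $\det h_\ell=h_{\Det E}$. The only cosmetic difference is that the paper packages the degree computation using $\deg\Delta_\varphi=2(n^2-n)(g-1)$, whereas you invoke Riemann--Hurwitz/GRR directly; and you spell out the pullback-of-curvature step that the paper leaves implicit.
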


\begin{proof} 
Construction \ref{construction:limiting} determines
a hermitian metric $h_\ell$ on $E_{C-Z}$ up 
to rescaling by a constant.  Any such metric $h_\ell$ solves the decoupled Hitchin's equations.
Since $\varphi$ and $h_\ell$
are diagonal in the basis of eigenbundles on $\varphi$,
$\left[\varphi, \varphi^{\dagger_{h_\ell}}\right]=0$.

\smallskip

\noindent\textsc{Claim:} The parabolic degree of $\cL$ is equal to the degree of $E$.
\\
\emph{Proof:} $\triangleright$
 The statement 
$\mathrm{pdeg} \mathcal{L}
=\mathrm{deg} E$ holds because of the choice of parabolic weights.
A cluster of size $K$ contributes a zero of order $K-1$ to $\Delta_\varphi$; at such a point $\widetilde{p} \in \widetilde{Z}$, we assigned the parabolic weight $\frac{1-K}{2}$.
Since $\Delta_\varphi$ has $2(n^2-n)(g-1)$ zeros (counted with multiplicity),
the sum of all parabolic weights is $-\frac{1}{2}  \cdot 2(n^2-n)(g-1)$.
Consequently, 
\begin{eqnarray}
\mathrm{pdeg} \;\cL &=& \deg \cL + \sum_{\tilde{p} \in \tilde{Z}} \alpha_p \\ \nonumber
&=& \left(\deg E+ (n^2-n)(g-1) \right) + \left(-\frac{1}{2}\right)2(n^2-n)(g-1) \\ \nonumber
&=& \mathrm{deg}\; E.  \hspace{4.5in} \triangleleft
\end{eqnarray}

\smallskip

The condition
$F^\perp_{D(\delbar_E, h_\ell)}=0$ holds  because $\mathrm{pdeg} \;\cL=\mathrm{deg}\; E$.
The induced metric $\det(h_\ell)$
is a Hermitian-Einstein metric on $\Det E$,
consequently it is a constant multiple of the fixed Hermitian-Einstein metric $h_{\Det E}$.
Rescale $h_\ell$ by a constant so that these two Hermitian-Einstein metrics agree.
 \end{proof}

\begin{rem}\label{rem:weights}
Note that in the proof we did not use the individual values of the parabolic weight.
We only used the fact that the sum of all parabolic weights was $-(n^2-n)(g-1)$.
In Construction \ref{construction:limiting}, we could take any collection of parabolic weights summing to $-(n^2-n)(g-1)$ and produce 
a hermitian metric solving the decoupled $SU(n)$-Hitchin's equations.
However, this hermitian metric agrees with $h_\infty$ only for our choice of parabolic weights (Corollary \ref{cor:main}).
\end{rem}

\subsection{Local model near a ramification point for a limiting configuration in \texorpdfstring{$\cM'$}{M}} \label{sec:gen2}

The next proposition gives a local model
for the limiting configuration
in Proposition \ref{prop:limitingconstruction}.

\begin{prop}\label{prop:reggoodgaugehinfty}
There is a holomorphic gauge satisfying the conditions of Proposition \ref{prop:reghol} in which 
  \begin{eqnarray} \label{eq:reggoodgaugehinfty}
    h_\ell &=& \bigoplus_{j=1}^{m_p}  \begin{pmatrix}
     |z_j|^{-2 \alpha_{K_j,1}}& & \\
      & \ddots & \\
      & & |z_j|^{-2 \alpha_{K_j,K_j}}  \end{pmatrix}_{K_j \times K_j}.
  \end{eqnarray}
Here, the constants $\alpha_{K,i}$ are 
\begin{equation} \label{eq:regalpha}
 \alpha_{K,i} = \frac{2i-(K+1)}{2K}.
\end{equation}
\end{prop}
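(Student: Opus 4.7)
The plan is to pull back to the ramified cover $\pi:\widetilde{\bD}\to\bD$, $w\mapsto z=w^K$, and use the change-of-basis formula from the proof of Proposition~\ref{prop:reghol} together with a Vandermonde identity to compute $h_\ell$ directly. I work in a single $K_j$-block; drop the index $j$ and write $K=K_j$, $\alpha_i=\alpha_{K,i}$, and $\zeta=\e^{2\pi\I/K}$. Recall from the proof of Proposition~\ref{prop:reghol} that on $\widetilde{\bD}$ the eigensections $s_k=(\sigma^{k-1})^*s_1$ diagonalize $\pi^*\varphi$ with eigenvalues $\lambda_k=\zeta^{k-1}w\,\de(w^K)$, and the holomorphic frame $\{\e_1,\dots,\e_K\}$ on $\bD$ is characterized by $\pi^*\e_i=s'_i$. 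Substituting $\lambda_k=\zeta^{k-1}w\,\de(w^K)$ into \eqref{eq:basiss} simplifies the expression for $s'_i$ to
\[
s'_i=\frac{w^{1-i}}{K}\sum_{k=1}^K\zeta^{(k-1)(K-i+1)}\,s_k,\qquad i=1,\dots,K,
\]
which includes the $i=1$ case since $\zeta^{(k-1)K}=1$. By Construction~\ref{construction:limiting}, the eigenline decomposition $\pi^*E|_{\widetilde{\bD}\setminus\{0\}}=\bigoplus_k L_k$ with $L_k=(\sigma^{k-1})^*\cL$ is $\pi^*h_\ell$-orthogonal, and each $L_k$ is equipped with the pullback of the parabolic Hermitian-Einstein metric $h_\cL$. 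The parabolic weight $\alpha=(1-K)/2$ at $w=0$ forces $|s_1|^2_{h_\cL}=|w|^{-2\alpha}=|w|^{K-1}$ up to a smooth positive factor in an adapted holomorphic trivialization of $\cL$.

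Next I substitute the formula for $s'_i$ into $\pi^*h_\ell(s'_i,s'_j)$. Orthogonality of $\{s_k\}$ collapses the double sum to a single sum, and the Vandermonde identity
\[
\sum_{k=1}^K\zeta^{(k-1)(j-i)}=K\,\delta_{ij}\qquad\text{(which holds since $|j-i|<K$)}
\]
kills all off-diagonal terms. What remains is
\[
\pi^*h_\ell(s'_i,s'_j)=\frac{w^{1-i}\,\bar w^{1-j}\,|w|^{K-1}}{K^2}\cdot K\,\delta_{ij}=\frac{|w|^{K+1-2i}}{K}\,\delta_{ij}.
\]
Pushing down via $\pi^*\e_i=s'_i$ and using $|w|^K=|z|$ yields
\[
h_\ell(\e_i,\e_j)=\frac{1}{K}\,|z|^{(K+1-2i)/K}\,\delta_{ij}=\frac{1}{K}\,|z|^{-2\alpha_{K,i}}\,\delta_{ij},
\]
which is exactly the right-hand side of \eqref{eq:reggoodgaugehinfty} up to the overall factor $1/K$. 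As a consistency check, $\sum_i\alpha_{K,i}=0$, so $\det h_\ell$ is non-singular at $z=0$, in agreement with the normalization $\det h_\ell=h_{\Det E}$.

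The main obstacle is promoting this computation---which pins down the singular behavior of $h_\ell$ exactly---to an equality rather than a leading-order asymptotic. In general the parabolic Hermitian-Einstein metric satisfies $|s_1|^2_{h_\cL}=|w|^{-2\alpha}\cdot e^{\psi(w,\bar w)}$ where $\psi$ is smooth but only the harmonic part of $\psi$ is removable by a holomorphic rescaling $s_1\mapsto f(w)s_1$; the non-harmonic part is pinned down by the curvature condition \eqref{eq:parabolichermitianeinstein}. The residual freedom in Proposition~\ref{prop:reghol}---rescaling $s_1$ by a holomorphic function of $z$, reparametrizing the coordinate $z$ preserving the normal form of $\varphi$, and the overall constant rescaling of $h_\cL$ from Proposition~\ref{prop:limitingconstruction}---is used to absorb the factor $1/K$ and as much of the smooth correction as possible. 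The remaining identification with the model \eqref{eq:reggoodgaugehinfty} amounts to existence of an adapted holomorphic trivialization of $(\cL,h_\cL)$ in which the Hermitian-Einstein metric reduces to its exact model form $|w|^{-2\alpha}$ near $w=0$, which follows from the local theory of parabolic bundles \cite{simpsonnoncompact,biquardparabolic}.
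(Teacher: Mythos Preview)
Your Vandermonde computation is correct and matches the paper's approach; the paper arrives at the same diagonal form for $\pi^*h_\ell(s'_i,s'_k)$. But you have correctly identified, and then not resolved, the crux of the argument. The appeal to ``the local theory of parabolic bundles'' at the end is not enough: that theory only gives the \emph{asymptotic} behavior $|s_1|^2_{h_\cL}\sim|w|^{K-1}$ near $w=0$, exactly the leading-order statement you already had. It does not by itself produce an adapted trivialization in which the metric equals $|w|^{K-1}$ \emph{exactly} on the whole disk.

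The missing ingredient is the Hermitian--Einstein equation itself. Since $h_\cL$ satisfies \eqref{eq:parabolichermitianeinstein}, its curvature is a constant multiple of $\pi^*\omega_C$; in particular (taking $\deg E=0$ first, as the paper does) the curvature vanishes, so $\log h_\cL(s_1,s_1)$ is a \emph{harmonic} function on $\widetilde{\bD}^\times$. Every harmonic function on a punctured disk has the form $\mathrm{Re}\,f(w)+c\log|w|$ with $f$ holomorphic, and the adapted condition forces $c=K-1$ and $f$ bounded, hence extendable across $0$. Now the holomorphic rescaling $s_1\mapsto K^{1/2}e^{-f/2}s_1$ gives $h_\cL(s_1,s_1)=K|w|^{K-1}$ exactly, and your Vandermonde computation then yields \eqref{eq:reggoodgaugehinfty} on the nose (the factor $K$ in the normalization absorbs your $1/K$). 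For $\deg E\neq 0$ one subtracts the K\"ahler potential before running the same argument. In your write-up you said ``only the harmonic part of $\psi$ is removable''---that is true, but the point you missed is that the Hermitian--Einstein condition forces $\psi$ to \emph{be} harmonic (up to the K\"ahler potential), so all of it is removable.
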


\begin{proof}
First, assume $\deg E=0$.
We can work locally with each cluster of size $K_j$. As in the proof of Proposition \ref{prop:reghol}, we assume that we are working with the first cluster and drop all subscripts relating to the cluster index. The key idea is that 
we use up the remaining gauge freedom in Proposition \ref{prop:reghol} by multiplying the section $s_1$
by a non-vanishing holomorphic function in order
to arrange that 
\begin{equation}
 h_\cL\left(s_1, s_1 \right) =\sqrt{K}|w|^{1-K}.
\end{equation}

Let $h_{\cL}$ be the Hermitian-Einstein metric on 
$\cL \rightarrow \Sigma$ which is adapted
to the hermitian metric. 
There are two consequences of this. First, because
$h_{\cL}$ is adapted
to the parabolic structure on $\cL \rightarrow \Sigma$
at $\tilde{p}_i$,
$h_{\cL}(s_{1}, s_{1}) \sim 
|w|^{-2 \cdot \frac{1-K}{2} }=|w|^{K-1}$.
Secondly, because $h_\cL$ is Hermitian-Einstein and $\log h_{\cL}(s_{1}, s_{1})$ is harmonic. 
Any harmonic function on the punctured disk $\widetilde{\bD}^\times$ can be written  $\mathrm{Re}(f(w))+c \log (|w|)$ where $f(w)$ is holomorphic on $\widetilde{\bD}^\times$ 
 and $c$ is some constant; hence
\begin{equation}
 \log h_\cL(s_{1}, s_{1}) = \mathrm{Re}(f(w)) + (K-1) \log (|w|).
\end{equation}
The function $f$ is bounded because $h_\cL$ is adapted, hence it extends to a holomorphic function on $\bD$.
We replace  $s_{1}$ with the section  $K^{1/2}\e^{-\frac{f}{2}} s_{1}$ 
that satisfies
\begin{equation}
 \log h_\cL\left(K^{1/2}\e^{-\frac{f}{2}} s_1, K^{1/2}\e^{-\frac{f}{2}} s_1 \right) = \log \left(K \e^{-\mathrm{Re}(f)} h_\cL(s_1, s_1) \right)=(K-1) \log(|w|) + \log K. 
\end{equation}
Consequently, $h_\cL(s_i, s_i) = K |w|^{K-1}$ for $i=1, \cdots K$.
Then, we see that
\begin{equation}
 \pi^*h_\ell(s_i, s_k) = \begin{cases} 0  &\mbox{if } i \neq k \\
                         |w|^{K+1-2i} &\mbox{if } i =k
                        \end{cases}.
\end{equation}
Hence in the basis $\{e_i\}$, the hermitian metric is as claimed.

Note that when $K=1$, the associated eigenvalue $\lambda$ is not ramified.
Consequently, the associated section $e$ satisfies 
$\log h_\ell(e, e)= \mathrm{Re}(f(z))$ for $f(z)$ harmonic on the disk $\bD$---rather than its cover.
Thus, by replacing $e$ with the section  $\e^{-\frac{f}{2}} e$,
we see that $h_\ell(\e^{-\frac{f}{2}} e,\e^{-\frac{f}{2}} e)=1$, as desired.

Note that $\det h_\ell=1$ because $h_\ell$ is block diagonal and the determinant of each $K \times K$ block is $1$.

\medskip

If $\deg E \neq 0$, then we simply note that $\log h_\cL(s_1, s_1)$ minus some multiple of the K\"ahler potential
is harmonic, and repeat the argument above.
\end{proof}

\section{A family of approximate solutions}\label{sec:approximatesolutions}

Fix a regular polystable Higgs bundle $(\delbar_E, \varphi)$.
Consider the $\R^{+}_t$-family of Higgs bundles 
$(\delbar_E, t \varphi)$. 
Ultimately, 
we seek to describe the corresponding family of harmonic
metric $h_t$ for large values of $t$.
In this section,
we construct a $\R^{+}$-family of approximate solutions $h_t^\app$ 
by desingularizing the limiting configuration $h_\ell$ in $\S$\ref{sec:limitingconfigurations}.
As shown in Figure \ref{fig:globalgluing}, the metric $h_\ell$ is singular at $p \in Z$, 
 so we glue
in smooth solutions 
of Hitchin's equations on the disks $\bD$ around each ramification point $p \in Z$. These smooth models are described in \S\ref{subsec:modelsolutions}.
Because these smooth models are defined on disks in $\C$ with its usual flat metric,
we take a conformal metric $g_C'$ on $C$ which is flat in each disk $\bD$ (Remark \ref{rem:metric}).
The approximate solutions $h_t^\app$ are defined in \S\ref{sec:defnapprox}.
\begin{figure}
\begin{centering} 
\includegraphics[height=1in]{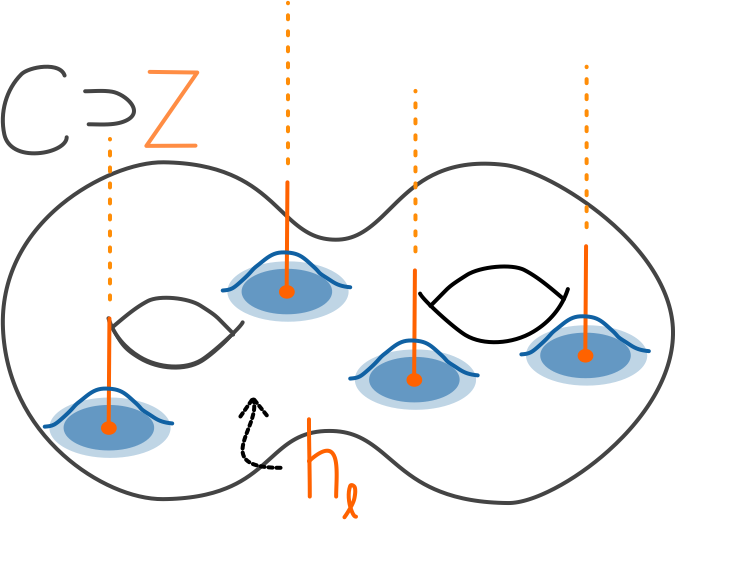}\\
\vspace{-.2in}
\caption{\label{fig:globalgluing} The curvature $F_{D(\delbar, h_\ell)}^\perp$ is concentrated at $p \in Z$, illustrated by orange spikes.  Approximate solutions $h_t^\app$ are constructed by desingularizing $h_\ell$ by gluing in smooth model solutions (shown in blue).
}
\end{centering}
\end{figure}

\subsection{Model solutions}\label{sec:modelsolutions}
For each cluster rank $K$, we describe the necessary family of model solutions of rank parameterized by $t \in \R^+$. All of these model solutions are on $\C$ with its flat metric.
We begin by reviewing the $K=2$ family of model solution featured in \cite{MSWW14} in \S\ref{sec:2model}
before turning to the higher rank versions in \S\ref{sec:MK1}. We conclude by describing the model solutions
for the regular Higgs bundle $(\delbar_E, t \varphi)$ on the disk $\bD$ in \S\ref{subsec:modelsolutions}.

\subsubsection{The $K=2$ family of model solution}\label{sec:2model}
The following family of model solutions is featured in \cite{MSWW14}.

\begin{defn} \label{defn:model2}
The \emph{$SU(2)$ $t$-model solution}
 is 
 \begin{eqnarray} \label{eq:model2}
   \delbar^{\2}_E&=& \delbar \\ \nonumber
   \varphi^{\2} &=& \begin{pmatrix} 0 & 1 \\ 
 z & 0 \end{pmatrix} \de z\\\nonumber
   h_t^{\2, \fid} &=& \begin{pmatrix} |z|^{1/2}e^{u_t(|z|)} &\\& |z|^{-1/2}e^{-u_t(|z|)}\end{pmatrix}
  \end{eqnarray}
  where $u_t:\R^{+} \rightarrow \R$ is solution of
  \begin{equation} \label{eq:Painleve}
  \left(\frac{\de^2}{\de|z|^2} + \frac{1}{|z|} \frac{\de}{\de |z|} \right) u_t=
  8 t^2 |z| \sinh(2 u_t). \end{equation}
with asymptotics \\
\begin{tabular}{ l l l }\label{eq:asymptotics}
 \hspace{2in} &$u_t(|z|) \sim  \frac{1}{\pi} K_0(\frac{8t}{3}|z|^{\frac{3}{2}})$ & as $|z|\rightarrow \infty$ \\
 &$u_t(|z|) \sim - \frac{1}{2} \log (|z|)$ 
 & as $|z| \rightarrow 0$.
\end{tabular}
\end{defn}
\begin{rem}
 The $u_t$ are related by
 \begin{equation} \label{eq:rescaleu}
  u_t=\rho_t^*u_1 \qquad \rho_t(z) = t^{2/3} z.
 \end{equation}
\end{rem}

\begin{rem}\label{rem:Painleve}Note that $h_t^{\2, \fid}$ has a chance of being smooth at $|z|=0$ because of
the coefficient of $\log(|z|)$ appearing in the expansion around $|z|=0$.
Mazzeo-Swoboda-Weiss-Witt prove that it is smooth in \cite[Corollary 3.4]{MSWW14}.
Moreover, note that the pointwise limit $\lim_{t \to \infty} h_t^{\2, \fid}$
is $\mathrm{diag}(|z|^{1/2}, |z|^{-1/2})$. This is the $2 \times 2$ 
block appearing in the limiting metric $h_\ell$ in \eqref{eq:reggoodgaugehinfty}.
\end{rem}
\begin{rem}
In unitary gauge (see the discussion at the end of \S\ref{sec:fixeddata}), the $SU(2)$ t-model solution is written
\begin{eqnarray}
A_t^{(2),\fid}&=&
d+ \left(\frac{1}{8} +\frac{|z|}{4} \frac{du_t}{d|z|} \right) \begin{pmatrix} 1 & \\  & -1 \end{pmatrix} \left( \frac{\de z}{z} - \frac{\de \zbar}{\zbar} \right)\\ \nonumber
\Phi^{(2),\fid}_t&=&
\begin{pmatrix} 0 & |z|^{1/2}e^{u_t(|z|)} \\
 \frac{z}{|z|^{1/2}}e^{-u_t(|z|)} & 0 \end{pmatrix}dz
\end{eqnarray}
In \cite{MSWW14}, as well as in \cite{GMNhitchin},
the model solution takes this shape.
\end{rem}

\subsubsection{The rank $K$ family of model solutions from \texorpdfstring{$\cM_{K,1}$}{MK1}} \label{sec:MK1}
For any rank $K$, there is a single $SU(K)$ model solution on $\C$ with its flat metric which generalizes the
the $SU(2)$ $(t=1)$-model solution in \S\ref{sec:2model}.
For each $K$, there is a one-point moduli space $\cM_{K,1}$ of solutions of the $SU(K)$-Hitchin's equations
on $\CP^1$ with an irregular singularity at $\{\infty\}$ such that the eigenvalues of the Higgs field are $\lambda_r = \e^{2 \pi i r/K} z^{1/K} \de z$. 
The point $[(\delbar_E, \varphi, h)]$ is fixed by a $U(1)$-action, consequently the solution of Hitchin's equations can be written down
relatively explicitly \cite{FredricksonNeitzke}:

\begin{prop}\cite[Proposition 3.9 \& Lemma 3.13]{FredricksonNeitzke} \label{prop:MK1model}
The one-point point moduli space $\cM_{K,1}=[(\delbar_E, \varphi, h^{(K), \model})]$ where 
\begin{equation} \label{eq:MK1model}
\delbar_E = \delbar
  , \qquad \varphi = \begin{pmatrix}
               0 & 1 & &\\ & 0&  \ddots & \\
                &  & \ddots& 1\\
                z & & & 0
              \end{pmatrix} \mathrm{d} z, \qquad
                h^{(K), \mathrm{mod}} = \begin{pmatrix}
     |z|^{-2 \alpha_{K,1}} \mathrm{e}^{u_{K,1}} & & \\
      & \ddots & \\
      & & |z|^{-2 \alpha_{K,K}} \mathrm{e}^{u_{K,K}} \end{pmatrix}.\end{equation}
The constants $\alpha_i$ are 
\begin{equation}
 \alpha_{K,i} = \frac{2i-(K+1)}{2K}.
\end{equation}
The real-valued functions $u_{K,i}(z) =u_{K,i}(|z|)$ satisfy the symmetry $u_{K,i} = -u_{K, K+1-i}$ 
and solve
 \begin{equation}\label{eq:ODE}
\frac{1}{4} \left(\frac{\mathrm{d}^2}{\mathrm{d} |z|^2}+ \frac{1}{|z|} \frac{\mathrm{d}}{\mathrm{d}|z|} \right) u_{K,i} =
|z|^{\frac{2}{K}} \left(\mathrm{e}^{u_{K,i}-u_{K,i+1}} - \mathrm{e}^{u_{K,i-1}-u_{K,i}} \right)
\end{equation}
with the following boundary conditions:
\begin{itemize}
\item The function $u_{K,i}$ decays to $0$  as $\vert z \vert \rightarrow \infty$.
\item Near $0$, $u_{K,i} \sim 2 \alpha_{K,i}\log \vert z \vert$.
\end{itemize}
Letting  $\mathbf{u}(|z|) = \left(u_{K,1}(|z|), \dots, u_{K,K}(|z|)\right)$, 
the function $\norm{\mathbf{u}(|z|)}^2$ is decreasing and exhibits exponential decay at $\infty$. 
More precisely, for $\eps>0$, take $R_\eps>0$ such that $\|\mathbf{u}(R_\eps)\| < \eps$.
Then, there is a constant $c>0$ (depending explicitly on $\eps$ and $K$) such that
\begin{equation}\label{eq:85b}
\|\mathbf{u}\|^2 (\rho)  \leq \eps^2 \frac{ K_0(c \zeta(\rho))}{K_0(c \zeta(R_\eps))} \qquad \mbox{for $\rho>R_\eps$},
\end{equation}
where $K_0$ is the modified Bessel function of first kind and $\zeta(|z|)=\frac{2K}{K+1}|z|^{\frac{(K+1)}{K}}$.
\end{prop}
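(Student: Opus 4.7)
The plan is to (i) reduce Hitchin's equations under the diagonal radial ansatz to the stated Toda-type ODE system, (ii) invoke wild harmonic bundle theory for existence, uniqueness, and boundary conditions, and (iii) establish subharmonicity and Bessel-type decay of $\|\mathbf{u}\|^2$ via an Abel summation identity and a super-solution comparison. The holomorphicity $\delbar_E \varphi = 0$ is automatic; all content lies in the Hitchin equation $F^\perp_D + [\varphi, \varphi^{\dagger_h}] = 0$.

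For (i), the key observation is that with $h = \mathrm{diag}(|z|^{-2\alpha_{K,i}} \e^{u_{K,i}})$ diagonal in the companion-matrix holomorphic frame for $\varphi$, both $F_D$ and $[\varphi, \varphi^{\dagger_h}]$ are diagonal, so the off-diagonal equations hold trivially. A direct calculation using $\alpha_{K,i+1} - \alpha_{K,i} = 1/K$ gives $h_i/h_{i+1} = |z|^{2/K} \e^{u_{K,i}-u_{K,i+1}}$ and the cyclic closure $|z|^2 h_K/h_1 = |z|^{2/K} \e^{u_{K,K}-u_{K,1}}$. Because $\sum_i \alpha_{K,i} = 0$ and $\sum_i u_{K,i} = 0$ (forced by the antipodal symmetry), $F^\perp_D = F_D$; since $\partial_{\bar z}\partial_z \log|z|^2 = 0$ off $z=0$ and the $u_{K,i}$ are radial, the diagonal entries of Hitchin's equation collapse to the stated ODE system.

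For (ii), the boundary conditions are dictated by the underlying Higgs bundle. Near $z=0$ we are at a regular cluster of size $K$ in the sense of Proposition \ref{prop:reghol}, so smoothness of $h$ forces $|z|^{-2\alpha_{K,i}}\e^{u_{K,i}}$ to be continuous, i.e.\ $u_{K,i} \sim 2\alpha_{K,i} \log|z|$. Near $z = \infty$ the Higgs bundle has an irregular singularity whose leading behavior is $z^{1/K}\de z$ on each sheet of the spectral cover, and matching to the limiting Hermitian-Einstein metric on $\mathcal{L} \to \Sigma$ in the spirit of Construction \ref{construction:limiting} forces $u_{K,i} \to 0$. Existence and uniqueness of the harmonic metric satisfying these conditions follow from the Biquard--Boalch / Mochizuki theory for wild harmonic bundles on $\CP^1$. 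The antipodal symmetry $u_{K,i} = -u_{K,K+1-i}$ then follows from the $\Z_K$-Dynkin symmetry of the Toda system combined with uniqueness.

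For (iii), setting $v_i := u_{K,i} - u_{K,i+1}$ with cyclic conventions (so $\sum_i v_i = 0$), I multiply the $i$-th ODE by $u_{K,i}$ and sum. An Abel summation gives
\begin{equation*}
\tfrac{1}{4}\sum_i u_{K,i}\,\Delta u_{K,i} = |z|^{2/K}\sum_i v_i \e^{v_i} = |z|^{2/K}\sum_i v_i(\e^{v_i}-1) \geq 0,
\end{equation*}
so $\Delta \|\mathbf{u}\|^2 \geq 2|\nabla\mathbf{u}|^2 \geq 0$, i.e.\ $\|\mathbf{u}\|^2$ is subharmonic; since it is radial and vanishes at infinity, it is monotone non-increasing. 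For the exponential decay, linearizing at $\mathbf{u}=0$ gives $\tfrac{1}{4}\Delta u_i = |z|^{2/K}(2u_i - u_{i+1} - u_{i-1})$; diagonalizing the cyclic Laplacian produces scalar equations $\tfrac{1}{4}\Delta u = \mu |z|^{2/K} u$ with $\mu \in \{2-2\cos(2\pi k/K)\}$. Under the change of variable $\zeta(\rho) = \tfrac{2K}{K+1}\rho^{(K+1)/K}$ this becomes exactly the modified Bessel equation $u_{\zeta\zeta} + \zeta^{-1}u_\zeta - \mu u = 0$, whose decaying solution is $K_0(\sqrt{\mu}\zeta)$. I expect the principal obstacle to be passing from this linearization to a genuine nonlinear bound: the plan is to build the barrier $w(\rho) := \eps^2 K_0(c\zeta(\rho))/K_0(c\zeta(R_\eps))$ with $c$ chosen slightly smaller than $\sqrt{\mu_1}$ where $\mu_1 = 2 - 2\cos(2\pi/K)$, use the convexity $v(\e^v-1) \geq c_0 v^2$ valid for small $|v|$ together with the smallness $\|\mathbf{u}(R_\eps)\|<\eps$ to ensure $w$ dominates $\|\mathbf{u}\|^2$ on $\{\rho = R_\eps\}$ and provides a super-solution to the scalar inequality satisfied by $\|\mathbf{u}\|^2$ on $\{\rho > R_\eps\}$, and conclude by the maximum principle.
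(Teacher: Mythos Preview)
The paper does not prove this proposition; it is quoted from \cite[Proposition~3.9 \& Lemma~3.13]{FredricksonNeitzke}, so there is nothing to compare your argument against directly in this paper. That said, the remark immediately following the statement records the explicit constant $c=(2C_\eps C_K)^{-1/2}$ with $C_K = 2-2\cos(2\pi/K)$, which is exactly the smallest nonzero eigenvalue $\mu_1$ of the cyclic discrete Laplacian. This confirms that the cited proof, like yours, passes through the spectral gap of the cyclic Laplacian and a Bessel-function comparison, so your strategy is the intended one.

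Your three steps are all sound. In (iii), the Abel summation
\[
\sum_i u_i\bigl(\e^{v_i}-\e^{v_{i-1}}\bigr)=\sum_i v_i\,\e^{v_i}=\sum_i v_i(\e^{v_i}-1)\ge 0
\]
is correct and gives subharmonicity of $\|\mathbf u\|^2$; combined with radial symmetry and decay at infinity, monotonicity follows from $(r\partial_r\|\mathbf u\|^2)'\ge 0$. For the barrier, note that after the change of variables the inequality becomes $\Delta_\zeta\|\mathbf u\|^2 \ge 2c_0\mu_1\|\mathbf u\|^2$ (the factor of $2$ comes from $\tfrac14\Delta_\rho=\rho^{2/K}\Delta_\zeta$ applied to $\tfrac18\Delta_\rho\|\mathbf u\|^2$), so the maximum principle allows any $c\le\sqrt{2c_0\mu_1}$, not just $c<\sqrt{\mu_1}$. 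Your smaller choice still works, and in any case the paper's remark explicitly notes the resulting bound is not sharp.

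One small point: in (ii) the antipodal symmetry $u_{K,i}=-u_{K,K+1-i}$ is more directly seen as a consequence of the fact that the Higgs bundle is isomorphic to its dual (equivalently, $h\mapsto (h^{-1})^T$ preserves the equations and boundary data), rather than the $\Z_K$ cyclic symmetry; the latter only permutes the $u_{K,i}$ cyclically.
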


\begin{rem} The bound in \eqref{eq:85b} is not sharp. 
The constant $c=(2 C_\eps C_K)^{-1/2}$ where $C_\eps>1$ with $\lim_{\eps \searrow 0} C_\eps=1$; the first few values $C_K$ are $C_2=4$, $C_3=3$, $C_4=2$, $C_5=\frac{5 - \sqrt{5}}{2}$. In the case where $K=2$, $u_1=-u_2 \sim K_0( \frac{8}{3} \rho^{3/2})$; consequently the constant $\frac{4}{3}c$ at best approaches $\frac{\sqrt{8}}{3}$---considerably worse than the optimal constant $\frac{16}{3}$.
\end{rem}

\begin{rem}
With the change of variables above, $u_{K,i}(\zeta)$ solve the system of equations
\begin{equation}\label{eq:toda}
\left(\frac{\de^2}{\de \zeta^2}+ \frac{1}{\zeta} \frac{\de}{\de\zeta} \right) u_{K,i} =
\e^{u_{K,i}-u_{K,i+1}} - \e^{u_{K,i-1}-u_{K,i}}.
\end{equation}
This is the radial
version of the coupled system of PDE known as ``2d cyclic affine Toda lattice with opposite sign.'' 
Because of the symmetry, this is actually a coupled system of $\lfloor \frac{K-1}{2} \rfloor$ ODEs.

The solution of Hitchin's equations in Proposition \eqref{prop:MK1model} appears earlier in the literature, where it is called a solution of the ``$tt^*$-Toda equations.'' The $tt^*$-Toda equations are a special case of the
$tt^*$-equations which were introduced by Cecotti and Vafa to describe
certain deformations of supersymmetric quantum field theories \cite{Cecotti:1991me, CecottiVafaExact}.
(Not every solution of the $tt^*$-equations is a solution of Hitchin's equations
on a Riemann surface, and conversely,  not every solution of Hitchin's equations gives a solution of the $tt^*$-equations. However, these coincide here roughly because
$\cM_{K,1}$ is a one-point moduli space fixed by a circle action and a real involution.)
These particular solutions were also studied in \cite{GuestLin, GuestLinIsomonodromy, MochizukiToda}. 
\end{rem}

\medskip

We now introduce the parameter $t \in \R^+$.
Define rescaled functions 
\begin{equation} \label{eq:regrescale}
u_{K,i, t} =\rho_{K,t}^* u_{K,i}, \qquad \mbox{where } \rho_{K,t}: r \rightarrow t^{\frac{K}{K+1}} r.
\end{equation}
The following triple $(\delbar_E, t \varphi, h_t^{(K), \mathrm{mod}})$ solves Hitchin's equations:
\begin{equation} \label{eq:MK1modelt}
\delbar_E = \delbar
  , \qquad t \varphi = t\begin{pmatrix}
               0 & 1 & &\\ & 0&  \ddots & \\
                &  & \ddots& 1\\
                z & & & 0
              \end{pmatrix} \mathrm{d} z, \qquad
                h_t^{(K), \mathrm{mod}} = \begin{pmatrix}
     |z|^{-2 \alpha_1} \mathrm{e}^{u_{K,1,t}} & & \\
      & \ddots & \\
      & & |z|^{-2 \alpha_K} \mathrm{e}^{u_{K,K,t}} \end{pmatrix}.\end{equation}

  \subsubsection{Family of model solutions for $(\delbar_E, t \varphi)$} \label{subsec:modelsolutions}
We will use the the following family of model solutions to desingularize $h_\ell$.
  
\begin{defn} 
Let $(\delbar_E, \varphi,h_\ell)$ be as in \eqref{eq:reghol} \& \eqref{eq:reggoodgaugehinfty}.
Define a hermitian metric
   \begin{eqnarray}
    h_t^{\mathrm{mod}} &=& \bigoplus_{j=1}^{m_p}  \begin{pmatrix}
     |z_j|^{-2 \alpha_{K_j,1}} \, \mathrm{e}^{u_{K_j,1,t}(|z_j|)} & & \\
      & \ddots & \\
      & & |z_j|^{-2 \alpha_{K_j,K_j}} \, \mathrm{e}^{u_{K_j, K_j, t}(|z_j|)} \end{pmatrix}_{K_j \times K_j}.
  \end{eqnarray}
Call the $t$-family $(\delbar_E, t \varphi, h_t^{\mathrm{mod}})$ the \emph{family of model solutions} of Hitchin's equations.
\end{defn}

\begin{rem}
In unitary gauge, this is
 \begin{eqnarray} \label{eq:regmodunitary}
 A^{\mathrm{mod}}_t &=&  \bigoplus_{j=1}^{m_p}  \begin{pmatrix}
   - \frac{\alpha_{K_j,1}}{2} + \frac{|z_j|}{4} \frac{\de u_{K_j,1,t}}{\de |z_j|} & & \\
      & \ddots & \\
      & &  -\frac{\alpha_{K_j,K_j}}{2} +\frac{|z_j|}{4} \frac{\de u_{K_j,K_j,t}}{\de |z_j|} \end{pmatrix}_{K_j \times K_j} \hspace{-.4in}\left( \frac{\de z_j}{z_j} - \frac{\de \zbar_j}{\de \zbar_j} \right)\\ \nonumber 
\Phi^{\mathrm{mod}}_t &=& \bigoplus_{j=1}^{m_p} \left(\lambda_{(j)}\mathbf{1}_{K_j} + \begin{pmatrix}
               0 & |z_j|^{\frac{1}{K_j}}\e^{\frac{u_{K_j,1, t} -u_{K_j,2,t}}{2}} & &\\ & 0&  \ddots & \\
                &  & \ddots& |z_j|^{\frac{1}{K_j}} \e^{\frac{u_{K_j,K_j-1, t} -u_{K_j,K_j,t}}{2}}\\
                z_j |z_j|^{-\frac{K_j-1}{K_j}} \e^{\frac{u_{K_j,K_j, t} -u_{K_j,1,t}}{2}}& & & 0
              \end{pmatrix}  \mathrm{d} z_j \right).\hspace{-1.0in}
\end{eqnarray}
Note that if $K_j=1$, the $1 \times 1$ block in $A^{\mathrm{mod}}_t$ is $(0)$ and the block in $\Phi_t^{\mathrm{mod}}$ is the eigenvalue $(\lambda_{(j)})$.
 \end{rem}

\subsection{Description of approximate solutions}\label{sec:defnapprox}
The following non-linear operator measures the
the failure of $(\delbar_E, \varphi, h)$ to be a solution of Hitchin's equations:
\begin{equation}\label{eq:F1}\mathbf{F}(\delbar_E, \varphi, h)
:=H^{1/2} \left( F_{D(\delbar_E, h)}^\perp + [ \varphi ,  \varphi^{\dagger_h} ] \right) H^{-1/2}.
\end{equation}
Observe that we conjugate by the $\End(E)$-valued section $H^{1/2}$ (discussed at the end of \S\ref{sec:fixeddata}) which satisfies $h(v, w)=h_0(H^{1/2} v, H^{1/2}w)$. By doing this, the output $\mathbf{F}(\delbar_E, \varphi, h)$ is an $h_0$-unitary section of $\Omega^{1,1}(C, \mathfrak{su}(E))$.
(Equivalently in the unitary formulation of Hitchin's equations, this operator $\mathbf{F}$ is equal to 
 $\mathbf{F}(\de_A, \Phi) = F^\perp_A + [\Phi, \Phi^{\dagger_{h_0}}].$)

\begin{defprop}\label{defpropreg}
Choose a smooth cutoff function 
$\chi: [0,\infty) \rightarrow [0,1]$ such that
\begin{eqnarray}
\chi\Big|_{[0, \frac{1}{2}]}=1 &\mbox{and}& \chi\Big|_{[1, \infty)}=0. 
\end{eqnarray}
On $\bD_p$, in the local gauge of Proposition \ref{prop:reggoodgaugehinfty}, define $h_t^{\app}$ by
  \begin{eqnarray} \label{eq:regHapp}
    h_t^\app &=& \bigoplus_{j=1}^{m_p}  \begin{pmatrix}
     |z_j|^{-2 \alpha_{K_j,1}} \, \mathrm{e}^{\chi(|z_j|) u_{K_j,1,t}(|z_j|)} & & \\
      & \ddots & \\
      & & |z_j|^{-2 \alpha_{K_j,K_j}} \, \mathrm{e}^{\chi(|z_j|) u_{K_j,K_j,t}(|z_j|)} \end{pmatrix}_{K_j \times K_j}.
  \end{eqnarray}
On  $C^\rext=C- \overline{ \cup_p \bD_p}$, define $h_t^{\app}=h_\ell$.

For $t_0>0$ sufficiently large, 
there exists positive constants $c, \delta$ such that for $t>t_0$
 \begin{equation} \Big\|  \mathbf{F}(\delbar_E, t\varphi, h_t^{\app}) \Big\| 
 _{L^2(C)} \leq c e^{-\delta t},\end{equation}
for $\mathbf{F}$ defined in \eqref{eq:F1}.
 Because of the exponential decay in $t$, call the family $\{(\delbar_E, t \varphi, h_t^\app)\}_{t>t_0}$ a 
 \emph{family of approximate solutions}.
 \end{defprop}

  \begin{rem} \label{rem:regunitarygauge}
  In unitary gauge, the family $(A_t^\app, \Phi_t^\app)$ is given by inserting the cutoff function $\chi$ into the expressions $(A_t^{\mathrm{mod}}, \Phi_t^{\mathrm{mod}})$ in \eqref{eq:regmodunitary}
 \end{rem}

\begin{proof}[Proof of Proposition \ref{defpropreg}]
On $C^\rext$, $h_t^\app=h_\ell$.
Because $(\delbar_E, \varphi, h_\ell)$ solves the decoupled Hitchin's equations, 
$\mathbf{F}(\delbar_E, t\varphi, h_t^{\app})$ vanishes on $C^\rext$.
The $L^2(C)$-norm is simply the sum of the $L^2(\mathbb{D}_p)$-norms
of each of the individual $K_j \times K_j$ blocks.
Dropping indices, the relevant $K \times K$ piece is
\begin{equation}
 \delbar_E =\delbar \quad t \varphi = t \begin{pmatrix}
               \lambda & 1 & &\\ & \lambda &  \ddots & \\
                &   & \ddots& 1\\
                z & & & \lambda
              \end{pmatrix} \mathrm{d} z, \quad
                h_t^\app = \begin{pmatrix}
     |z|^{-2 \alpha_{K,1}} \mathrm{e}^{\chi u_{K,1,t}} & & \\
      & \ddots & \\
      & & |z|^{-2 \alpha_{K,K}} \mathrm{e}^{\chi u_{K,K,t}} \end{pmatrix}.\end{equation}
       On the $K \times K$ block, the value of $F_{D(\delbar_E, \varphi)}+t^2[\varphi, \varphi^{\dagger_{h_t^\app}}]=0$ is the diagonal matrix whose $(i,i)$ entry is 
   \begin{equation} \label{eq:iientry}
\left(-\frac{1}{4} \left(\frac{\mathrm{d}^2}{\mathrm{d} |z|^2}+ \frac{1}{|z|} \frac{\mathrm{d}}{\mathrm{d}|z|} \right) \chi u_{K,i,t} +
t^2|z|^{\frac{2}{K}} \left(\mathrm{e}^{\chi u_{K,i,t}-\chi u_{K,i+1,t}} - \mathrm{e}^{\chi u_{K,i-1,t}-\chi u_{K,i,t}} \right) \right) \de z \wedge \de \zbar.
 \end{equation}
(Note that without the cutoff function $\chi$, this vanishes.)
From the exponential decay of $u_{K,i}(|z|)|$ in $|z|$ 
like $\e^{-c |z|^{\frac{K+1}{K}}}$ (see Proposition \ref{prop:MK1model}), we 
see that---fixing $|z|$--- $u_{K, i, t}(|z|)$ decays in $t$ like $\e^{-ct}$. 
To see that the expression in \eqref{eq:iientry} is exponentially decaying, we break it into pieces. For $t\gg0$, there is a constant $C_1$ close to $1$ such that 
\begin{eqnarray}
 \left|\mathrm{e}^{\chi u_{K,i,t}-\chi u_{K,i+1,t}} - 1 \right| &\leq& C_1 \left(\chi u_{K,i,t}-\chi u_{K,i+1,t} \right) \leq C_1 \left( |u_{K,i,t}| + |u_{K,i+1,t}| \right)  \\ \nonumber 
  \left|\mathrm{e}^{\chi u_{K,i-1,t}-\chi u_{K,i,t}} - 1 \right| &\leq& C_1 \left(\chi u_{K,i-1,t}-\chi u_{K,i,t} \right) \leq C_1 \left( |u_{K,i-1,t}| + |u_{K,i,t}| \right).
  \end{eqnarray}
  Additionally, because $u_{K,i,t}$ and its derivatives in $|z|$ are all exponentially decaying in $t$, there is a constant $C_2$ depending on the maximum of  $|\chi'|$ and $|\chi''|$
  such that
  \begin{eqnarray}
\left|\left(\frac{\mathrm{d}^2}{\mathrm{d} |z|^2}+ \frac{1}{|z|} \frac{\mathrm{d}}{\mathrm{d}|z|} \right) \chi u_{K,i,t}  \right| &\leq& C_2 \e^{-ct}.
\end{eqnarray}
The exponential decay of \eqref{eq:iientry} follows.
\end{proof}

\section{Properties of the linearization}\label{sec:linearization}
In Proposition \ref{defpropreg}, we
proved that the family  $h_t^\app$ of approximate solutions was close to solving Hitchin's equations.
The metrics $h_t^\app$ failed to solve Hitchin's equations
only on the union of the gluing annuli around $p \in Z$ where the value of the cutoff function $\chi(|z_i|)$
differed from $0$ or $1$---and on those gluing annuli, the error was exponentially decaying in $t$.

Looking forward, the main  theorem (Theorem \ref{thm:main}) states something much stronger: for $t \gg 0$, 
the approximate metric $h_t^\app$
is close to the actual harmonic $h_t$ solving Hitchin's equations in the sense that 
\begin{equation} \label{eq:formulation}
h_t(v,w)=h_t^\app(\e^{-\gamma_t}\; v, \e^{-\gamma_t}\; w),
\end{equation}
for $\gamma_t$ ``small.''

 \medskip
Following the conventions of Mazzeo-Swoboda-Weiss-Witt, 
we do the analysis using the unitary formulation of Hitchin's equations discussed in \S\ref{sec:fixeddata}. We fix a hermitian metric $h_0$ on $E$.  We replace the triple $(\delbar_E, t\varphi, h_t^\app)$ with a pair $(\de_{A_t}, t \Phi_t)$ such that $(\delbar_E, t \varphi, h_t^\app)$ and $(\delbar_{A_t}, t \Phi_t, h_0)$ are complex gauge equivalent with respect to the action in \eqref{eq:complexgaugeequiv}, i.e. $[(\delbar_E, t \varphi, h_t^\app)]$ and $[(\delbar_{A_t}, t \Phi_t, h_0)]$ define the same point in the Hitchin moduli space $\cM$.

There are two other interesting actions of the complex gauge group on the space of triples $(\delbar_E, \varphi, h)$. For these, the equation $\delbar_E \varphi=0$ is preserved by the action of the complex gauge group; however, the equation $F^\perp_{D(\delbar_E, h)} + [\varphi, \varphi^{\dagger_h}]=0$ is not preserved.
In the first action,
the complex gauge group acts transitively on the space of hermitian metrics by 
\begin{equation}
 g \cdot_{1} (\delbar_E, \varphi, h) = (\delbar_E, \varphi, g \cdot h) \qquad \mbox{where $(g\cdot h)(v,w) = h(g v, g w)$}.
\end{equation}
If $(\delbar_E, \varphi)$ is polystable, then in the complex gauge orbit, there is a hermitian metric $g \cdot h$ solving Hitchin's equations.
In the second action, we fix the hermitian metric and take the action
\begin{equation}
 g \cdot_2 (\delbar_E, \varphi, h) = (g \circ  \delbar_E \circ g^{-1}, g \varphi g^{-1},  h).
\end{equation}
This second action induces  a complex gauge action on the space of pairs $(\de_A, \Phi)$
in the unitary formulation of Hitchin's equations:
\begin{equation} \label{eq:action2un}
g \cdot (\de_A, \Phi) = (D(g \circ \delbar_E \circ g^{-1}, h_0), g\Phi g^{-1} ),
\end{equation}
 where $D$ is the Chern connection associated to the pair.
 Note that these two actions 
 of the complex gauge transformation satisfy
 \begin{equation} \label{eq:actionrelation}
  g \cdot \left( g \cdot_2 (\delbar_E, \varphi, h) \right) = g \cdot_1 (\delbar_E, \varphi, h),
 \end{equation}
where $g \cdot$ is the action of the complex gauge transformation in \eqref{eq:complexgaugeequiv}.

\medskip

We are interested in finding the complex gauge transformation $g$ such that $g \cdot(\de_{A_t}, t\Phi_t)$ (defined in \eqref{eq:action2un})
solves Hitchin's equations.
Since Hitchin's equations are invariant under $h_0$-unitary gauge transformations,
we take the standard
slice of the complex gauge transformations modulo $h_0$-unitary gauge transformations by assuming that $g=\e^{-\gamma}$ is $h_0$-hermitian.
Define the operator 
\begin{equation}\label{eq:Fapp}
\mathbf{F}^{\app}_t(\gamma)
:=-i \star \left(F_{A_t^{\exp(-\gamma)}} + t^2[\e^{-\gamma}\Phi_t \e^{\gamma},\e^{\gamma} \Phi_t^{\dagger_{h_0}} \e^{-\gamma}] \right).
\end{equation}
(We add the superscript to clarify that this expression
is based at the approximate solution $(\delbar_E, t\varphi, h_t^\app)$.)
We are interested in the family $\gamma_t$ satisfying $\mathbf{F}^{\app}_t(\gamma_t)=0$.
Note that this is equivalent 
to finding an $h_0$-unitary $\gamma_t$ satisfying \eqref{eq:formulation}.

\bigskip

In this section, we study the linearization of $\mathbf{F}^\app_t$
and prove bounds on its inverse (Proposition \ref{prop:boundoninverse}).
The linearization of $\mathbf{F}^\app_t$ at $0$ is 
\begin{eqnarray} \label{eq:Lt}
 L_t\gamma:=D\mathbf{F}^\app_t(0)[\gamma]&=&\frac{d}{d\eps}\Big|_{\eps=0} \mathbf{F}^\app_t(\eps \gamma)\\ \nonumber
 &=& \Delta_{A_t} \gamma - i \star t^2 M_{\Phi_t} \gamma 
\end{eqnarray}
where 
\begin{eqnarray}
\Delta_{A_t} &:=& \de_{A_t}^* \de_{A_t} \gamma \\ \nonumber
 M_{\Phi_t}\gamma &:=& \left[\Phi_t^* \wedge [\Phi_t,  \gamma ] \right]
-\left[\Phi_t  \wedge [\Phi_t^*, \gamma] \right].
\end{eqnarray}

First note that $L_t$ is a positive operator.
\begin{prop}\label{prop:positivity}\cite[Proposition 5.1]{MSWW14}
 If $\gamma \in \Omega^0(\mathfrak{sl}(E))$, then
 \begin{equation}
  \IP{L_t \gamma, \gamma}_{L^2} = \norm{\de_A \gamma}_{L^2}^2 + 2t^2\norm{[\Phi, \gamma]}^2_{L^2} + 2t^2\norm{[\Phi^*, \gamma]}^2_{L^2} \geq 0.
 \end{equation}
 Consequently, restricted to $\Omega^0(\I \mathfrak{su}(E))$, $L_t$ has no kernel.
\end{prop}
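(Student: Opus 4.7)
The identity is a Weitzenbock-type formula obtained by combining an integration by parts with a pointwise algebraic identity for the curvature term $M_{\Phi_t}$. The kernel statement then follows from positivity and simplicity of the underlying Higgs bundle.

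\textbf{Step 1 (the Laplacian term).} Split
\[
\IP{L_t\gamma,\gamma}_{L^2}
\;=\;\IP{\Delta_{A_t}\gamma,\gamma}_{L^2}
\;-\;it^2\,\IP{\star M_{\Phi_t}\gamma,\,\gamma}_{L^2}.
\]
Since $C$ is closed and $\Delta_{A_t}=\de_{A_t}^*\de_{A_t}$, an immediate integration by parts yields $\IP{\Delta_{A_t}\gamma,\gamma}_{L^2}=\norm{\de_{A_t}\gamma}_{L^2}^2$. This step is completely routine.

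\textbf{Step 2 (the Higgs term).} I would prove the pointwise identity
\[
-\,i\star M_{\Phi_t}\gamma
\;=\;2\bigl([\Phi_t^{*},[\Phi_t,\gamma]]+[\Phi_t,[\Phi_t^{*},\gamma]]\bigr)\big/\bigl(\tfrac{i}{2}dz\wedge d\bar z\bigr),
\]
i.e.\ write $\Phi_t=\phi\,dz$ locally, expand the graded brackets
$[\Phi^{*}\wedge[\Phi,\gamma]]=\Phi^{*}\wedge[\Phi,\gamma]+[\Phi,\gamma]\wedge\Phi^{*}=[\phi^{*},[\phi,\gamma]]\,d\bar z\wedge dz$ and similarly for $[\Phi\wedge[\Phi^{*},\gamma]]$, and apply $-i\star$ using $\star(d\bar z\wedge dz)=2i$ (with appropriate conformal factor against $g_C'$). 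Pairing against $\gamma$ via $\IP{A,B}=\tr(AB^{*_{h_0}})$ and using cyclicity of the trace together with the adjoint relations $[\phi,\gamma]^{*}=-[\phi^{*},\gamma^{*}]$ and $\gamma^{*}=\gamma$ (for $\gamma\in i\mathfrak{su}$; the general $\mathfrak{sl}$ case works identically) gives the key pointwise simplifications
\[
\IP{[\phi^{*},[\phi,\gamma]],\gamma}=\norm{[\phi,\gamma]}^2,\qquad
\IP{[\phi,[\phi^{*},\gamma]],\gamma}=\norm{[\phi^{*},\gamma]}^2.
\]
Assembling and integrating over $C$ produces the $2t^2\norm{[\Phi,\gamma]}_{L^2}^2+2t^2\norm{[\Phi^{*},\gamma]}_{L^2}^2$ contribution, completing the identity. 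Each summand is manifestly nonnegative.

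\textbf{Step 3 (no kernel on $i\mathfrak{su}(E)$).} Suppose $\gamma\in\Omega^0(i\mathfrak{su}(E))$ lies in $\ker L_t$. By the identity, each of the three terms on the right-hand side vanishes, so $\de_{A_t}\gamma=0$, $[\Phi_t,\gamma]=0$, and $[\Phi_t^{*},\gamma]=0$. In particular $\bar\partial_{A_t}\gamma=0$, meaning $\gamma$ is a holomorphic endomorphism of $(E,\bar\partial_{A_t})$ commuting with the Higgs field $\Phi_t$. Since $(\bar\partial_{A_t},\Phi_t)$ is complex-gauge equivalent to the regular polystable Higgs bundle $(\bar\partial_E,t\varphi)$, and for a regular Higgs bundle the spectral data is a line bundle on a smooth spectral curve $\Sigma$, its endomorphism ring is $\C\cdot\mathrm{Id}$ (applied componentwise if $\Sigma$ is disconnected, combined with tracelessness across all factors). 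Because $\gamma\in\mathfrak{sl}(E)$ is traceless, this forces $\gamma=0$.

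\textbf{Expected obstacle.} The substantive content is Step 2: a careful bookkeeping of signs from the graded commutator of $\mathfrak{gl}(E)$-valued forms, the convention for $\star$ on $(1,1)$-forms, and the conformal factor of $g_C'$ must conspire to produce exactly the coefficient $2$ on each Higgs-term with the correct sign. Step 1 is trivial and Step 3 is just an application of simplicity, so the pointwise algebraic identity is where care is required.
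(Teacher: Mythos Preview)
The paper does not supply its own proof of this proposition; it is quoted directly from \cite[Proposition~5.1]{MSWW14} and used as a black box. Your reconstruction is correct and is the standard argument: integrate by parts for the $\Delta_{A_t}$ term, carry out the pointwise graded-bracket computation for the $M_{\Phi_t}$ term, and then invoke simplicity of the Higgs bundle for the kernel statement.

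One small remark on Step~3: the parenthetical about disconnected $\Sigma$ is unnecessary. Since $(\bar\partial_E,\varphi)\in\mathcal M'$, the spectral curve $\Sigma$ is smooth, and a smooth $SL(n,\C)$ spectral curve is automatically irreducible: were $\Sigma=\Sigma_1\cup\Sigma_2$ a decomposition into components of degrees $k$ and $n-k$ over $C$, their intersection in $\mathrm{Tot}(K_C)$ is the zero locus of the resultant, a nonzero section of $K_C^{k(n-k)}$, which is nonempty since $g\ge 2$; thus $\Sigma$ would be singular there. Hence the Higgs bundle is in fact stable (not merely polystable), its endomorphism ring is exactly $\C\cdot\mathrm{Id}$, and tracelessness alone forces $\gamma=0$.
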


We now prove that its inverse $L_t^{-1}: L^2(i \mathfrak{su}(E)) \rightarrow H^2(i \mathfrak{su}(E))$ is bounded.
\begin{prop}\label{prop:boundoninverse} 
For $t_0$ sufficiently large, there
is are constants $\tilde{C}_1,\tilde{C}_2>0$ such that
\begin{itemize}
\item[(a)] \hspace{1.5in} $ \|L_t^{-1}\|_{\cL(L^2, L^2)} \leq \tilde{C_1},$
\item[(b)] \hspace{1.5in} $\|L_t^{-1}\|_{\cL(L^2, H^2)} \leq \tilde{C_2}t^2.$
\end{itemize}
\end{prop}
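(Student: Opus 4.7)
The plan is to prove (a) by a blow-up/contradiction argument adapted to the cluster stratification of Section \ref{sec:stratification}, and then to deduce (b) from (a) via elliptic regularity. For (a), suppose the bound fails: there is a sequence $t_k \to \infty$ and $\gamma_k \in \Omega^0(\I\mathfrak{su}(E))$ with $\|\gamma_k\|_{L^2} = 1$ and $\|L_{t_k}\gamma_k\|_{L^2} \to 0$. Pairing $L_{t_k}\gamma_k$ against $\gamma_k$ in Proposition \ref{prop:positivity} and using Cauchy--Schwarz yields
\begin{equation*}
\|\de_{A_{t_k}}\gamma_k\|_{L^2} \to 0, \qquad t_k\|[\Phi_{t_k}, \gamma_k]\|_{L^2} \to 0, \qquad t_k\|[\Phi_{t_k}^*, \gamma_k]\|_{L^2} \to 0,
\end{equation*}
so that any reasonable limit of $\gamma_k$ must be parallel \emph{and} commute with both $\Phi_{t_k}$ and $\Phi_{t_k}^*$.

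I extract such a limit on two complementary regions. Partition $C$ into the exterior region $C^\rext$ of Proposition \ref{defpropreg} on which the averaged eigenvalues of $\varphi$ are separated by $\eps_\lambda>0$ (Remark \ref{rem:eps}), together with, for each $p \in Z$ with partition $n = K_{1,p} + \cdots + K_{m_p, p}$, disjoint model neighborhoods $U_{p,j}$ around each cluster. Each $U_{p,j}$ is dilated by $t_k^{K_j/(K_j+1)}$, the natural rescaling of the $\cM_{K_j,1}$ model dictated by \eqref{eq:regrescale}; under this dilation the pair $(\de_{A_{t_k}}, t_k \Phi_{t_k})$ restricted to $U_{p,j}$ converges on compact sets to the $\cM_{K_j,1}$ model pair on $\C$ produced by Proposition \ref{prop:MK1model}. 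On $C^\rext$, the $t_k$ in front of the commutator terms together with the eigenvalue gap forces any weak limit $\gamma_\infty$ to be diagonal in the $\varphi$-eigenbasis; then $\de_{A_\infty}\gamma_\infty = 0$ combined with the connectedness of the smooth spectral curve $\Sigma$ and tracelessness of $\gamma_\infty \in \I\mathfrak{su}(E)$ forces $\gamma_\infty \equiv 0$ on $C^\rext$. On each $U_{p,j}$, the rescaled $\gamma_k$ converge to a finite-energy element of the $L^2$-kernel of the linearization of $\mathbf{F}$ at the model solution $(\delbar, \varphi, h^{(K_j),\model})$, which must vanish. A standard concentration-compactness/partition-of-unity argument then forces $\|\gamma_k\|_{L^2} \to 0$, contradicting the normalization.

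The hard part will be this last injectivity: unlike in the $n=2$ situation of \cite{MSWW14}, one must handle model linearizations of every cluster rank $K = 2, \ldots, n$, each on its own noncompact geometry, with its own weighted $L^2$ setup, and its own $tt^*$-Toda background. The block-diagonal form of $h_t^\app$ in \eqref{eq:regHapp} decouples the $K_j \times K_j$ blocks on $\bD_p$ up to exponentially small off-diagonal coupling, reducing the interior analysis to $n-1$ independent injectivity statements (one per cluster rank), each proved by a Weitzenb\"ock/integration-by-parts argument on $\C$ in the spirit of \cite[\S5]{MSWW14} but using the exponential decay in \eqref{eq:85b} to control the relevant Hardy-type error terms. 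Once (a) is established, (b) is immediate: writing $\Delta_{A_t}\gamma = L_t\gamma + \I t^2 \star M_{\Phi_t}\gamma$ and using the uniform pointwise bound $|M_{\Phi_t}\gamma| \leq C|\Phi_t|_{h_0}^2|\gamma|$ with $|\Phi_t|_{h_0}$ bounded on $C$, standard elliptic regularity for the Bochner Laplacian $\Delta_{A_t}$ yields $\|\gamma\|_{H^2} \leq C(\|L_t\gamma\|_{L^2} + t^2\|\gamma\|_{L^2})$; applying (a) to the $L^2$ term gives $\|\gamma\|_{H^2} \leq \tilde{C}_2 t^2 \|L_t\gamma\|_{L^2}$, as required.
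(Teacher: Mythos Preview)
Your argument for (b) is essentially the paper's: both reduce to controlling $\Delta_{A}\gamma$ in terms of $L_t\gamma$ and $t^2\|\gamma\|_{L^2}$ via the bound on $M_{\Phi}$, then invoke (a). The paper uses the fixed operator $\Delta_{A_\infty}$ (whose graph norm is equivalent to $H^2$ by a $t$-independent constant, cf.\ \cite[Lemma 6.5]{MSWW14}) rather than your $\Delta_{A_t}$; your version needs the additional observation that the elliptic constant for $\Delta_{A_t}$ degenerates no worse than $t^2$, which follows from $\|A_t\|_{C^1}\le Ct$ (Lemma \ref{lem:MSWW68}) and interpolation, so this is fine.

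For (a), however, your approach is genuinely different from the paper's and has a real gap. The paper does \emph{not} do an $L^2$-normalized blow-up with a separate rescaling per cluster; it uses a weighted $L^\infty$ normalization $\sup_C \mu_t^\delta|\psi_t|=1$ and lets the location of the supremum pick out a \emph{single} scale $K_0$, so that the rescaled operator limits to the mixed object $\Delta_{\widetilde A}$ of \eqref{eq:Atilde} with $A_\infty/A_{\mathrm{mod}}/A_0$ blocks. The contradiction then comes from the indicial-root computation and Proposition \ref{prop:nosoln}. The paper adopts this global strategy precisely because of the obstruction flagged in Remark \ref{rem:domaindecomp}: on each disk $\bD_p$ the local problem has a nontrivial kernel consisting of the block-scalar sections $\gamma=\bigoplus_j \gamma_{(j)}\mathbf 1_{K_j}$.

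Your scheme runs directly into this obstruction. After rescaling the $K_j$-block by $t_k^{K_j/(K_j+1)}$, the $L^2$ norm is \emph{not} preserved (the rescaled $L^2$ norm blows up like $t_k^{K_j/(K_j+1)}$), so you have no right to assert that the blow-up limit is a ``finite-energy element of the $L^2$-kernel'' on $\C$. The limit is only in $L^2_{\mathrm{loc}}$, and the block-scalar $c_j\mathbf 1_{K_j}$ survives: it is parallel for the model connection, commutes with the model Higgs field, yet is not in $L^2(\C)$, so your injectivity statement does not exclude it. These soft modes are killed only by the \emph{global} topology of $\Sigma$ via the exterior region, but your partition into $C^{\mathrm{ext}}$ and the rescaled interiors leaves uncontrolled neck annuli $\{Rt_k^{-K_j/(K_j+1)}\le |z|\le \tfrac12\}$ (and, when two clusters at the same $p$ have $K_a\neq K_b$, the off-block entries of $\gamma$ live at incompatible scales). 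A sequence with $\gamma_k$ close to $\bigoplus_j c_j\mathbf 1_{K_j}$ on $\bD_p$, tapering to $0$ across $C^{\mathrm{ext}}$, is compatible with all of your a priori bounds, and ``standard concentration-compactness'' does not by itself rule it out. To salvage the argument you would need an explicit Poincar\'e-type estimate propagating the vanishing of $\gamma_\infty$ on $C^{\mathrm{ext}}$ across the necks to force $\|c_{j,k}\|_{L^2(\bD_p)}\to 0$; this is exactly the coupling the paper's weighted-$L^\infty$ method is designed to capture in one stroke.
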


\begin{rem}\label{rem:domaindecomp}
For the $SU(2)$ case, the analog of 
Proposition \ref{prop:boundoninverse}a is 
stated in \cite[Lemma 6.3]{MSWW14}.
An important ingredient of their strategy is the
the domain decomposition principle in \cite{Bar}.
They decompose $C$ into disjoint pieces: neighborhoods $\bD_p$ around
each point $p \in Z$, plus the remaining piece $C^{\rext}=C- \bigcup_p \bD_p$.
On each piece, 
they find a lower bound for the first Neumann eigenvalue.
Then,
the domain decomposition principle gives a
lower bound on the first global eigenvalue.

One might hope that this method of proof
works for $SU(n)$ when $n>2$.  However, this does
not work because the Neumann boundary problem
on each disk $\bD$ has kernel. 
By explicit computation of $L_t$ 
in the basis of \eqref{eq:regHapp}
on $\bD$ (see \ref{eq:decoupledtilde}),
one can compute
that the Neumann kernel consists
of constant traceless diagonal 
matrices
with the shape
\begin{equation}
 \gamma = \bigoplus_{j=1}^{m_p}  \gamma_{(j)}\mathbf{1}_{K_j} 
\end{equation}
Consequently, we pursue a global strategy that does not use the domain decomposition
principle. 
\end{rem}

\begin{proof}[Proof of Proposition \ref{prop:boundoninverse}a] 
\emph{(Necessary lemmata appear in \S\ref{app:localLt}.)}
Define
\begin{equation}
 \mathbb{L}_t\gamma := \Delta_{A_t} \gamma - i \star M_{\Phi_t} \gamma.
\end{equation}
Since $M_{\Phi_t}$ is a semi-positive operator, $L_t \geq \mathbb{L}_t$.  
Consider the eigenvalues $\{\lambda_\ell^t\}$ of $\mathbb{L}_t$. These are all positive by Proposition \ref{prop:positivity}.
 We will prove that the lowest eigenvalue $\lambda_0^t$ of $\mathbb{L}_t$ is bounded  below by some constant $\kappa>0$  as $t \rightarrow \infty$. Suppose to the contrary that $\lambda_0^t \rightarrow 0$.

We define a family of weight functions $\mu_t: C \rightarrow \R^{+}$ as follows:
Around each point $p \in Z$, work in the gauge from Proposition \ref{prop:reghol} \& \ref{prop:reggoodgaugehinfty} and let $z$ be some holomorphic coordinate centered at $p$.  (The coordinate $z$ need not be any of the holomorphic coordinates $z_i$ appearing in Proposition \ref{prop:reghol}.)
Order the elements of the partition of $n$ so 
that $K_1 \geq K_2 \geq \cdots \geq K_{m_p}$. 
\begin{figure}[h!]
 \begin{center}
  \includegraphics[height=1in]{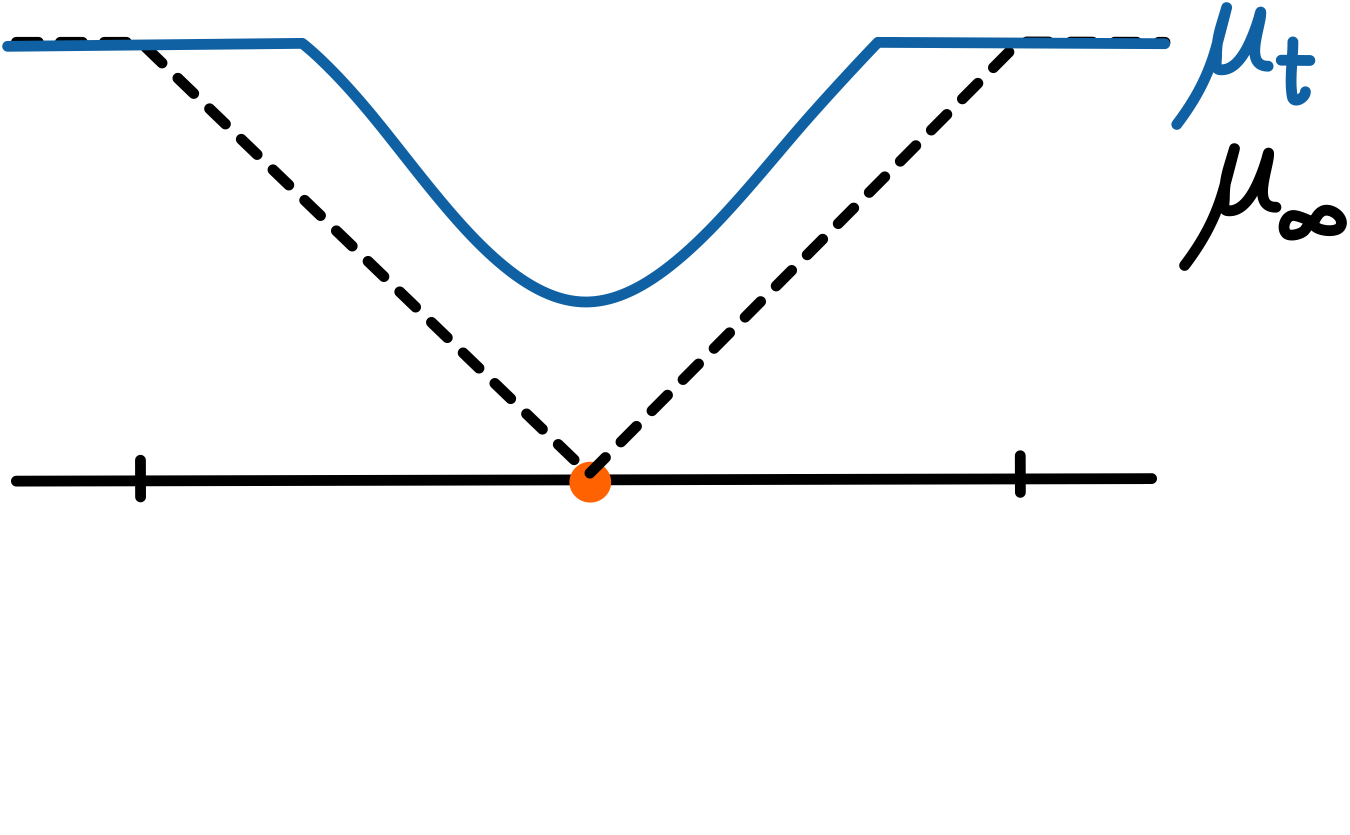}\\
  \vspace{-.5in}
  \caption{\label{fig:weight} Weight function $\mu_t$}
 \end{center}
\end{figure}
In the unit disk $\mathbb{D}_{p_j}=\{|z_j|\leq 1\}$ around $p_j$, define the weight function by
\begin{equation}
 \mu_t(z) :=  \min\left(  \left( t^{-\frac{2K_1}{K_1+1}} + |z|^2 \right)^{1/2}, 1  \right).
 \end{equation}
 On the rest of the surface, define 
 \begin{equation}
 \mu_t(x) :=  1 \qquad x \in C\setminus \{\mathbb{D}_{p_j}\}_{p_j \in Z}\end{equation}
  The weight function $\mu_t$ (shown in Figure \ref{fig:weight}) is continuous. (Its lack of regularity is immaterial, and we could easily introduce a smoothed version.)
  Note that $\mu_t$ increases in $|z|$ with minimum $\mu_t(0)= t^{-\frac{K_1}{K_1+1}}$.
 The family $\{\mu_t\}$ is uniformly bounded above by $1$, and the family is also bounded away from $0$ on any set where $|z|>\eps>0$.

 Let $\psi_t$ denote an eigensection of the first eigenvalue $\lambda_0^t$ . Fix some constant $\delta>0$. (We will choose a good value of $\delta$ later in the proof.)  We normalize $\psi_t$---multiplying it by a constant--- so that
\begin{equation} \label{eq:regnormalization}
 \sup_C \mu_t^\delta |\psi_t| =1.
\end{equation}
In what follows, we show the supremum of $\mu_t^\delta |\psi_t|$ cannot be achieved at any point of $C$---contradicting our initial assumption that $\lambda_0^t \rightarrow 0$.
 
\medskip
\noindent\textsc{Claim:} There is no value of $\eps \in (0,1]$ for which
there exists a constant $\eta$ such that for all $t$---or rather for some unbounded subsequence $\{t_k\}$---
\begin{equation} \label{eq:regclaim1}
 \sup_{C-\bigcup\limits_{p_j \in Z} \{|z_j| \leq \eps\}} \mu_t^\delta |\psi_t| \geq \eta >0.
\end{equation}\\
\emph{Proof of Claim:} $\triangleright$ 
Since \eqref{eq:regnormalization} holds, then for any choice of $\eps>0$
$|\psi_t| \leq  \mu_t^{-\delta}  \leq \eps^{-\delta}$ on ${C-\bigcup\limits_{p_j \in Z} \{|z_j| \leq \eps\}}$.
Because the eigensections $\{\psi_t\}$
are uniformly bounded in $L^\infty$
on $C-\bigcup_{p_j \in Z}\{|z_j|<\eps\}$, by compactness,
we may obtain a subsequence of $\psi_t$ which converges
in $L^\infty$ on the punctured surface $C-Z$ to a section
$\psi_\infty$; moreover,
by elliptic regularity, the $\{\psi_t\}$ and limiting 
$\psi_\infty$ are in $C^\infty$. 
In the region $C-\cup_Z \{|z_j| < \eps\}$,
the coefficients of $\mathbb{L}_t$ are converging smoothly;
thus $\psi_\infty$ satisfies
\begin{equation}
 \mathbb{L}_\infty \psi_\infty =0 \qquad \mbox{on $C- Z$}.
\end{equation}
Furthermore, $|\psi_\infty|$ is non-zero from our assumption in \eqref{eq:regclaim1}.

Using the local conic regularity theory at $p \in Z$ (see \cite{MazzeoEdge1, MazzeoWeiss}), $\psi_\infty$
has an asymptotic expansion in powers of $r=|z|$ with coefficients which are trigonometric functions of the
angular variable $\theta$.  
Because $\psi_\infty$ is bounded at $p \in Z$, all of the powers of $r$ in the expansion of $\psi_\infty$ at $p$ are nonnegative. 
Now, 
\begin{equation}
 \IP{\mathbb{L}_\infty \psi_\infty, \psi_\infty} =- \star  \de \star \IP{\de_{A_\infty} \psi_\infty, \psi_\infty} + \norm{\de_{A_\infty} \psi_\infty}^2  +2 \norm{[\Phi_\infty, \psi_\infty}^2.
\end{equation}
Note that $\star \IP{\de_{A_\infty} \psi_\infty, \psi_\infty}$ vanishes at each point $p \in Z$ (These reasons are elaborated in a more general setting in \eqref{eq:samereason}.);
hence, doing integration by parts, 
\begin{equation}
 0=\IP{L_\infty \psi_\infty, \psi_\infty}_{L^2(C)} = \norm{\de_{A_\infty} \psi_\infty}^2_{L^2(C)} + 2 \norm{[\Phi_\infty, \psi_\infty]}^2_{L^2(C)}.
\end{equation}
From Proposition \ref{prop:positivity}, there is no global non-zero solution satisfying both $\de_{A_\infty} \psi_\infty =0$ and $[\Phi_\infty, \psi_\infty]=0$, hence $\psi_\infty=0$.

Now, suppose the claim is false, i.e. suppose there is a choice of  $\eps \in (0,1]$ and $\eta>0$ such that for all $t$
\begin{equation} \label{eq:supposenot}
 \sup_{C-\bigcup\limits_{p_j \in Z} \{|z_j| \leq \eps\}} \mu_t^\delta |\psi_t| \geq \eta.
\end{equation}
Then, $\sup_{C-\bigcup\limits_{p_j \in Z}\{|z_j| \leq \eps\}} |\psi_t| \geq  \sup_{C-\bigcup\limits_{p_j \in Z}\{|z_j| \leq \eps\}} \eta \mu_t^{-\delta} \geq \eta,$ so $\psi_\infty$ is non-zero, a contradiction. 
 $\triangleleft$

\bigskip

Let  $\{q_t\}\rightarrow \overline{q}$ be a convergent sequence of  points at which the supremum of $\mu_t^\delta |\psi_t|$ in \eqref{eq:regnormalization} is achieved. From the \textsc{Claim}, we see that $\overline{q}=p \in Z$ and that $\mu_t^\delta |\psi_t|$ tends to zero pointwise (and in fact uniformly in compact subsets) on $C-Z$ . 
Let $z:\bD \rightarrow \C$ be the chosen holomorphic coordinate centered at $p$.
Define $z_t:=z(q_t)$. Note that $\{z_t\}$ converge to zero because $\{q_t\}$ converge
to $p$. Let $K_0$ be the largest integer less than or equal to $K_1$ for which
$ t^{\frac{K_0}{K_0+1}} z_t$
is bounded above by some constant $R$.
Note that $K_0$ is automatically nonnegative.
We will now show that if we assume that $\lambda_0^t \rightarrow 0$, then the supremum of \eqref{eq:regnormalization} also can't be achieved at a point $p \in Z$, by separately considering two cases, depending on whether
$z_t$ converge to zero more quickly (\textsc{Case A}, $K_0>0$) or more slowly (\textsc{Case B}, $K_0=0$).

\medskip

\noindent{\textsc{Case A. $K_0>0$}}: 
Define a rescaling
\begin{equation}
 \rho_{K_0,t}:z \rightarrow t^{\frac{K_0}{K_0+1}} z=:w.
\end{equation}
Let $w_t=\rho_{K_0,t}(z_t)$, and note that $|w_t| \leq R$.
Now, pullback and rescale the eigensections $\psi_t$, taking 
\begin{equation}\Psi_t := t^{-\frac{\delta K_0}{K_0+1}}(\rho_{K_0,t}^{-1})^* \psi_t.\end{equation}
On the disk, the bound in \eqref{eq:regnormalization} is
\begin{equation}
 \left( t^{-\frac{2K_1}{K_1+1}} + |z|^2 \right)^{\delta/2} |\psi_t(z)| \leq 1.
\end{equation}
This implies that 
\begin{equation}
 \left(t^{-\frac{2K_1}{K_1+1}+\frac{2K_0}{K_0+1}}+ |w|^2\right)^{\delta/2} |\Psi_t(w)| \leq 1, 
\end{equation}
with equality attained at $w_t$. 
Since $K_0 \leq K_1$, $-\frac{2K_1}{K_1+1}+\frac{2K_0}{K_0+1} \leq 0$.
Consequently,
\begin{equation}\label{eq:regbound} 
|\Psi_\infty(w)| \leq
\begin{cases}\left(1+ |w|^2 \right)^{-\delta/2} \qquad &\mbox{if }K_0 = K_1\\
                        |w|^{-\delta} \qquad &\mbox{if }K_0  < K_1.\\
                       \end{cases}
\end{equation}
Since the disk $\{|w| \leq R\}$ is compact,
a subsequence of $w_t$ converges to some $\overline{w}$, hence $|\Psi_\infty(\overline{w})| \neq 0$.
By Lemma \ref{lem:3laplacians}, 
\begin{equation}\label{eq:operatorlimit}
 \hspace{-.2in} \lim_{t \rightarrow \infty} t^{-\frac{2K_0}{K_0+1}} (\rho_{K_0,t}^{-1})^*\mathbb{L}_t =\Delta_{\widetilde{A}}= \left(\bigoplus\limits_{j: \; K_j > K_0} A_\infty \right) \oplus \left(\bigoplus\limits_{j: \; K_j = K_0} A_{\mathrm{mod}}\right) \oplus \left(\bigoplus\limits_{j:\; K_j < K_0} A_0\right).
\end{equation}
The expressions for $A_\infty, A_{\mathrm{mod}}$, and $A_0$ are given in \eqref{eq:lapshape}.
Because the coefficients of the operators in \eqref{eq:operatorlimit} converge smoothly to $\Delta_{\widetilde{A}}$, the non-zero $\Psi_\infty$ 
satisfies the bound in \eqref{eq:regbound} and 
\begin{equation}\label{eq:reg2a}
 \Delta_{\widetilde{A}} \Psi_\infty =0. 
\end{equation}
By Proposition \ref{prop:nosoln}, for $\delta>0$ sufficiently small, there is no non-zero solution.
But $\Psi_\infty$ is non-zero! Thus, \textsc{Case A} cannot hold.

\medskip

\noindent\textsc{Case B. $K_0=0$:} Lastly,
 suppose that $|\rho_{1,t}(z_t)|=|t^{1/2}z_t|$ is unbounded.
Define a rescaling 
\begin{equation}
 \sigma_t:z \rightarrow  |z_t|^{-1} z=:\widetilde{w}
\end{equation}
Let $\widetilde{w}_t = \sigma_t(z_t)$, and note that $|\widetilde{w}_t|=1$.
Now, pullback the eigensection $\psi_t$ and rescale it by an aptly chosen constant
\begin{equation}
 \widetilde{\Psi}_t:= |z_t|^\delta  (\sigma_t^{-1})^* \psi_t 
\end{equation}
The constant  is chosen so that the bound in \eqref{eq:regnormalization} implies that
\begin{equation}\label{eq:regbound2b}
 (|z_t|^{-2}t^{-\frac{2 K_1}{K_1+1}}+|\widetilde{w}|^2)^{\delta/2} |\widetilde{\Psi}_t(\widetilde{w})| \leq 1.
\end{equation}
 Taking the limit of the bounds in \eqref{eq:regbound2b}, we see that 
 $\widetilde{\Psi}_\infty$ satisfies
\begin{equation}\label{eq:regbound3}
 |\widetilde{\Psi}_\infty(\widetilde{w})| \leq |\widetilde{w}|^{-\delta}.
\end{equation}
Since $\widetilde{\Psi}_t$ achieves the bound in \eqref{eq:regbound2b} at $\widetilde{w}_t$ which has norm one,
$\widetilde{\Psi}_\infty$ also achieves the bound in \eqref{eq:regbound3} on the unit circle; hence $\widetilde{\Psi}_\infty$ is non-zero.

In the rescaling limit,
\begin{equation}
  \lim_{t \rightarrow \infty} (\sigma_t^{-1})^* M_{\Phi_t} = M_{\Phi_{\infty}} \qquad 
  \lim_{t \rightarrow \infty} |z_t|^{-2} (\sigma_t^{-1})^*\Delta_{A_t} =\Delta_{A_\infty};
\end{equation}
consequently, 
\begin{equation}
 \lim_{t \rightarrow \infty} |z_t|^{-2} (\sigma_t^{-1})^*\mathbb{L}_t =\Delta_{A_\infty},
\end{equation}
where $A_\infty$ is defined in \eqref{eq:lapshape}.
Thus, $\widetilde{\Psi}_\infty$ is non-zero and satisfies
\begin{equation}
 \Delta_{A_\infty} \widetilde{\Psi}_\infty(\widetilde{w})=0, \qquad |\widetilde{\Psi}_\infty(\widetilde{w})| \leq |\widetilde{w}|^{-\delta}.
\end{equation}
By Proposition \ref{prop:nosoln}, for $\delta>0$ sufficiently small, there is no non-zero solution.
But $\widetilde{\Psi}_\infty$ is non-zero. 
Thus, \textsc{Case B} too is impossible.

\medskip

In summary, we have shown that it is impossible that $ \lambda_0^t \rightarrow 0$.
\end{proof}

\begin{proof}[Proof of Proposition \ref{prop:boundoninverse}b]

The proof that $\|L_t^{-1}\|_{\cL(L^2, H^2)} \leq \tilde{C} t^2$
is a direct adaption of the proof of in the $SU(2)$ case
\cite[Lemma 6.5]{MSWW14}.
The
graph norm of $\Delta_{A_\infty}$ is equivalent to the
standard Sobolev $H^2$-norm \cite[Lemma 6.5]{MSWW14}.
Consequently, we will prove that there is a constant $\tilde{C}'$ such that
\begin{equation}\label{eq:goal1}
 \sqrt{\|L_t^{-1} u\|^2_{L^2} + \|\Delta_{A_\infty} L_t^{-1} u\|^2_{L^2}}  \leq \tilde{C}'t^2\|u\|_{L^2}.
\end{equation}
Define
\begin{equation}
 \widetilde{L}_t = \Delta_{A_\infty} - \I \star t^2 M_{\Phi_\infty}.
\end{equation}
(For comparison,
recall from \eqref{eq:Lt} that
$ L_t=\Delta_{A_t} - \I \star t^2 M_{\Phi_t}. $)
Then, note that
\begin{eqnarray}
 \|\Delta_{A_\infty} L^{-1}_t u\|_{L^2} &\leq& \|u\|_{L^2} +  
 \|(\Delta_{A_\infty}-L_t) L_t^{-1}  u\|_{L^2} \\ \nonumber
 &\leq&  \|u\|_{L^2} +   \left(t^2
 \|M_{\Phi_\infty}\|_{\cL(L^2,L^2)}+\|(\widetilde{L_t}-L_t)\|_{\cL(L^2,L^2)} \right)\|L_t^{-1}\|_{\cL(L^2,L^2)} \|u\|_{L^2}.
\end{eqnarray}
The bound
 $\norm{M_{\Phi_\infty}}_{\cL(L^2, L^2)} \leq c_M$ follows in Lemma \ref{lem:collected}.
 The bound
$\norm{\widetilde{L_t}-L_t}_{\cL(L^2, L^2)} \leq C \e^{-\delta t}$ follows because $\Phi_t$ converges to $\Phi_\infty$ exponentially in $t$ and $A_t$ converges to $A_\infty$ exponentially in $t$. (See \cite[Lemma 6.5]{MSWW14} for the case of $K\times K =2 \times 2$ blocks.)
 The bound $\|L_t^{-1}\|_{\cL(L^2,L^2)} \leq \widetilde{C}_1$ is from Proposition \ref{prop:boundoninverse}a. Thus we obtain the
desired bound in \eqref{eq:goal1}.
\end{proof}

In the proof of Proposition \ref{prop:boundoninverse}b, we used the following bound
on $M_{\Phi_\infty}$. 
\begin{lem}\label{lem:collected}
 There is a constant $c_M$ such that at any point of $C$
 \begin{equation} \label{eq:Mbound}
  |M_{\Phi_\infty}|_{g_C, h_\ell} \leq c_M.
 \end{equation}
\end{lem}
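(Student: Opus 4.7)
The plan is to reduce the claim to a pointwise bound on $|\Phi_\infty|^2$ and then estimate the Higgs field directly using the local model of Proposition \ref{prop:reggoodgaugehinfty}.

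First I would observe that $M_{\Phi_\infty}$ is a zero-order (algebraic) operator on $\gamma$: each of its two summands is a nested commutator with $\Phi_\infty$ or its hermitian adjoint, wedged together. Consequently the pointwise operator norm obeys a universal bound
\[
 |M_{\Phi_\infty}|_{g_C, h_\ell} \leq C_0 \, |\Phi_\infty|^2_{g_C, h_\ell}
\]
for some numerical constant $C_0$, and this bound is gauge invariant. It therefore suffices to bound $|\Phi_\infty|^2$ uniformly on $C$.

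On the exterior $C^\rext = C \setminus \bigcup_{p \in Z} \bD_p$, both the Higgs field and the limiting metric are smooth, so compactness yields the bound immediately. Near a ramification point $p \in Z$, I would work in the holomorphic gauge of Proposition \ref{prop:reggoodgaugehinfty}, in which $\varphi$ is block-diagonal with a companion-matrix block (plus the scalar $\lambda_{(j)}\mathbf{1}_{K_j}$) on each of the $m_p$ subblocks, while $h_\ell$ is diagonal with entries $|z_j|^{-2\alpha_{K_j, i}}$. A direct matrix computation on each $K_j \times K_j$ block should give the identity
\[
 \varphi \, \varphi^{\dagger_{h_\ell}} = |z_j|^{2/K_j} \, \mathbf{1}_{K_j} \otimes \de z_j \wedge \de \bar z_j,
\]
so that $|\varphi|^2_{g_C', h_\ell} = K_j |z_j|^{2/K_j}$ on the $K_j$-block in the flat metric $g_C'$ of Remark \ref{rem:metric}. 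Since $|z_j| \leq 1$ on $\bD_p$ and the scalar contributions $\lambda_{(j)}\mathbf{1}_{K_j}$ are holomorphic and hence bounded, $|\varphi|_{g_C', h_\ell}$ is uniformly bounded on each disk.

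The hard part, modest as it is, lies in verifying the cancellation producing the identity above: the $(1, K_j)$ entry of $\varphi^{\dagger_{h_\ell}}$ equals $|z_j|^{-2(K_j-1)/K_j}\bar z_j$, which is singular for $K_j \geq 3$; it is only when combined with the entry $z_j$ of $\varphi$ at position $(K_j, 1)$ that one obtains the bounded contribution $|z_j|^{2/K_j}$ on the diagonal of the product. Equivalently, one could pass to the unitary gauge $g = h_\ell^{-1/2}$, where all entries of $\Phi_\infty = h_\ell^{1/2} \varphi h_\ell^{-1/2}$ are already individually of magnitude $|z_j|^{1/K_j}$; either computation suffices, and combined with the trivial exterior estimate yields the lemma.
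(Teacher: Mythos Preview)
Your argument is correct, but it takes a different route from the paper's. You bound the operator norm of $M_{\Phi_\infty}$ crudely by a multiple of $|\Phi_\infty|^2$ and then compute $|\Phi_\infty|^2$ block by block in the local frame of Proposition~\ref{prop:reggoodgaugehinfty}, observing that the singular-looking entries of $\varphi^{\dagger_{h_\ell}}$ combine with entries of $\varphi$ to give bounded products (equivalently, that in unitary gauge every nonzero entry of $\Phi_\infty$ has magnitude $|z_j|^{1/K_j}$). The paper instead exploits the fact that $[\varphi,\varphi^{\dagger_{h_\ell}}]=0$, so $\varphi$ is normal with $h_\ell$-orthogonal eigenlines; pulling back to the spectral cover and decomposing $\pi^*\End E=\bigoplus \Hom(\cL_i,\cL_j)$, one computes directly that $M_{\Phi_\infty}$ acts on the $(i,j)$ component by multiplication by $2|\lambda_i-\lambda_j|^2$, and the bound follows because the eigenvalue differences are bounded on $C$.

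Your approach is more hands-on and does not use normality at all, which makes it self-contained but costs you a constant and some structural insight. The paper's approach is shorter, gives the exact eigenvalues of $M_{\Phi_\infty}$ rather than just an upper bound, and makes manifest why the operator is bounded even though $h_\ell$ is singular: the obstruction is measured entirely by differences of eigenvalues of $\varphi$, which are globally bounded holomorphic data independent of the metric.
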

\begin{proof}
Over $\Sigma$, $\pi^* \cE$ decomposes as the sum
of eigenline bundles of $\pi^*\varphi$. 
Let $\cL_i$
be the line bundle corresponding to globally-defined eigenvalue $\pi^*\lambda_i$.
 To see the bound on $M_{\Phi_\infty}$, note
 that, pulled-back from $C$ to $\Sigma$, $\pi^*\End\; \cE=\oplus \mathrm{Hom}(\cL_i, \cL_j)$ and the $(i,j)$-entry of $\pi^*M_{\Phi_\infty}$ is
 \begin{equation} 
 \left( \pi^*M_{\Phi_\infty}\gamma \right)_{ij}= 2|\lambda_i-\lambda_j|^2 \gamma_{ij}.
 \end{equation}
The difference between the eigenvalues of $\varphi$ are bounded above,
hence $|M_{\Phi_\infty}|$ is bounded.
\end{proof}

\subsection{Lemmata for Proposition \ref{prop:boundoninverse}a: Local analysis \texorpdfstring{$\mbox{of } \Delta_{\widetilde{A}}$}{ }}
\label{app:localLt}

Take $p \in Z$ with associated partition $n=K_1+ \cdots K_{m_p}$ and holomorphic coordinates $z_1, \cdots, z_{m_p}$ centered at $p$ in Proposition \ref{prop:reghol}. Given a holomorphic coordinate
$z$ centered at $p$, define biholomorphic functions $f_i = z_i \circ z^{-1}$ such that $f_i(0)=0$.  Given a choice of positive number $J$, define 
\begin{equation} \label{eq:Atilde}
\widetilde{A}:= \left(\bigoplus\limits_{j: \; K_j > J} A_\infty \right) \oplus \left(\bigoplus\limits_{j: \; K_j = J} A_{\mathrm{mod}} \right)  \oplus \left(\bigoplus\limits_{j: \; K_j < J} A_{0} \right),
 \end{equation}
 where the $K\times K$ blocks are 
\begin{eqnarray}\label{eq:lapshape}
A_\infty&=& \begin{pmatrix}
   - \frac{\alpha_{K,1}}{2}  & & \\
      & \ddots & \\
      & &  -\frac{\alpha_{K,K}}{2} \end{pmatrix}_{K \times K} \hspace{-.4in}\left( \frac{\de z}{z} - \frac{\de \zbar}{ \zbar} \right)\\ \nonumber
A_{\mathrm{mod}}
   &=&\begin{pmatrix}
   - \frac{\alpha_{K,1}}{2} + \frac{|z|}{4} \frac{\de u_{K,1,1}\left(|f'(0)| |z|\right)}{\de |z|} & & \\
      & \ddots & \\
      & &  -\frac{\alpha_{K,K}}{2} +\frac{|z|}{4} \frac{\de u_{K,K,1}\left(|f'(0)| |z|\right)}{\de |z|} \end{pmatrix}_{K \times K} \hspace{-.4in}\left( \frac{\de z}{z} - \frac{\de \zbar}{\zbar} \right)\\ \nonumber
A_0&=& \mathbf{0}_{K \times K}.
\end{eqnarray}
   \begin{figure}[ht]
  \begin{center}
   \includegraphics[height=.75in]{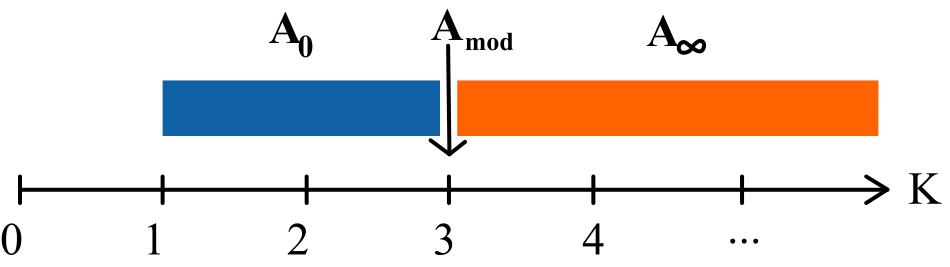} 
   \caption{\label{fig:laplacian} The values $K_1, \cdots, K_{m_p}$ are separated into three different categories: less than, equal to, or greater than some critical integer (here, 3). 
   The limiting $\widetilde{A}$  consequently features three types of blocks: $A_0, A_{\mathrm{mod}}, A_\infty$.}
  \end{center}
 \end{figure}
 
\begin{ex}
 For example, if $n=2+1+1$,
 \begin{equation}
\widetilde{A} = \left(\frac{1}{8} +\frac{|z|}{4} \frac{du_1}{d|z|} \right) \begin{pmatrix}1 & 0 &  0 &  0 \\ 0 & -1 & 0 & 0 \\  0 & 0 & 0 & 0 \\ 0 & 0 & 0  &0 \end{pmatrix} 
\left( \frac{\de z}{z} - \frac{\de \zbar}{\zbar} \right), 
 \end{equation}
where $u_1=u_{2,1,t=1}$ is the function in \eqref{eq:Painleve}.
\end{ex}

In this section, we prove that $\widetilde{A}$ appears naturally in a rescaling limit (Lemma \ref{lem:3laplacians}). This is used in the proof of Proposition \ref{prop:boundoninverse}a in \eqref{eq:operator}.
We then prove in Proposition \ref{prop:nosoln}  that for $\delta>0$ sufficiently small, there are no solutions of
\begin{equation}
 \Delta_{\widetilde{A}} \Psi=0 \qquad |\Psi| \leq |z|^{-\delta}.
\end{equation}

 \begin{lem}\label{lem:3laplacians}
 Define a rescaling
 \begin{equation}
 \rho_{J,t}: z \mapsto t^{\frac{J}{J+1}}z.
\end{equation}
Then, for the approximate solution $(A_t, \Phi_t)$
on $\bD$ in the unitary gauge of Remark \ref{rem:regunitarygauge}, we have
 \begin{equation}\label{eq:operatorlimitlem}
 \lim_{t \rightarrow \infty} t^{-\frac{2J}{J+1}} (\rho_{J,t}^{-1})^* \Delta_{A_t} =\Delta_{\widetilde{A}}.
\end{equation}
\end{lem}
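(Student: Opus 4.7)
The plan is to reduce the problem to a block-by-block analysis. In the unitary gauge of Remark \ref{rem:regunitarygauge}, the connection $A_t$ near $p$ is block-diagonal according to the partition $n = K_1 + \cdots + K_{m_p}$, so $\Delta_{A_t}$ splits as a direct sum of operators $\Delta_{A_t^{(j)}}$ acting on the $K_j \times K_j$ matrix-valued sections of each block. It therefore suffices to compute the rescaling limit of each block separately; the three cases $K_j > J$, $K_j = J$, and $K_j < J$ should match $A_\infty$, $A_{\mathrm{mod}}$, $A_0$ respectively as defined in \eqref{eq:lapshape}.

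The core computation combines the scaling identity $u_{K_j,i,t}(|z_j|) = u_{K_j,i,1}\bigl(t^{K_j/(K_j+1)}|z_j|\bigr)$ from \eqref{eq:regrescale} with the biholomorphic expansion $z_j = f_j(z) = f_j'(0) z + O(z^2)$. After substituting $z = t^{-J/(J+1)} w$, one obtains
\[
u_{K_j,i,t}(|z_j|) = u_{K_j,i,1}\Bigl(t^{\,K_j/(K_j+1) - J/(J+1)} |f_j'(0)||w| + O(t^{-J/(J+1)})\Bigr),
\]
and the three cases split by the sign of the exponent $K_j/(K_j+1) - J/(J+1)$. When $K_j > J$ the argument diverges, so by the exponential decay at infinity in Proposition \ref{prop:MK1model} both $u_{K_j,i,t}$ and the derivative term $|z_j|/4 \cdot du_{K_j,i,t}/d|z_j|$ vanish in the limit, leaving only the parabolic term $-\alpha_{K_j,i}/2$; this matches $A_\infty$. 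When $K_j = J$ the argument stabilizes at $|f_j'(0)||w|$, reproducing $A_{\mathrm{mod}}$ verbatim. When $K_j < J$ the argument shrinks to zero; writing $r = t^{K_j/(K_j+1)}|z_j|$ one has $|z_j|/4 \cdot du_{K_j,i,t}/d|z_j| = (r/4) \cdot du_{K_j,i,1}/dr$, and the near-origin asymptotic $u_{K_j,i,1}(r) \sim 2\alpha_{K_j,i}\log r$ of Proposition \ref{prop:MK1model} gives $r \cdot du_{K_j,i,1}/dr \to 2\alpha_{K_j,i}$, so this term converges to $\alpha_{K_j,i}/2$ and exactly cancels $-\alpha_{K_j,i}/2$, producing $A_0 = 0$. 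The cutoff function $\chi$ is inert throughout, since $\chi \equiv 1$ on a fixed neighborhood of $0$ and the rescaling localizes attention to $|z| \to 0$.

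The final step is to verify that $t^{-2J/(J+1)}$ is the correct normalization of the full Laplacian. The logarithmic 1-form $dz/z - d\bar z/\bar z$ appearing in each block of $A_t$ is invariant under the dilation $z \mapsto w = t^{J/(J+1)} z$, so $(\rho_{J,t}^{-1})^* A_t$ acquires no overall power of $t$ as a 1-form; only the second-order operator $d_{A_t}^* d_{A_t}$ does. Because $\partial_z = t^{J/(J+1)} \partial_w$ and the flat Euclidean metric satisfies $|dz|^2 = t^{-2J/(J+1)}|dw|^2$, a routine accounting shows $(\rho_{J,t}^{-1})^* \Delta_{A_t} = t^{2J/(J+1)} \Delta_{(\rho_{J,t}^{-1})^* A_t}$, so the prefactor exactly normalizes the differential part. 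The main bookkeeping obstacle is verifying that the subleading $O(z^2)$ terms from $f_j$ (and the corresponding correction to $f_j'$) contribute only lower-order errors after rescaling; these are $O(t^{-J/(J+1)})$ uniformly on compact sets in $w$, so the smooth convergence of the coefficients on compacta of $\mathbb{C}^\times$ follows, yielding the claimed identity $\lim_t t^{-2J/(J+1)} (\rho_{J,t}^{-1})^* \Delta_{A_t} = \Delta_{\widetilde{A}}$.
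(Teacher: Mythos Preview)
Your proposal is correct and follows essentially the same route as the paper: reduce to block-by-block convergence of the connection form via the scaling identity $u_{K,i,t}(r)=u_{K,i,1}(t^{K/(K+1)}r)$, then split into the three cases by the sign of $\tfrac{K_j}{K_j+1}-\tfrac{J}{J+1}$ and invoke the known asymptotics of $u_{K,i,1}$ at $0$ and $\infty$. One small imprecision: $\Delta_{A_t}$ does not literally split as a direct sum of operators on the $K_j\times K_j$ diagonal blocks (it acts on all of $\mathfrak{sl}(n)$, including off-block entries), but what you actually use---and what suffices---is that the diagonal connection form $A_t$ itself is block-diagonal, so showing each block of $A_t$ converges to the corresponding block of $\widetilde{A}$ yields $\Delta_{A_t}\to\Delta_{\widetilde{A}}$.
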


\begin{proof}
First note that for any holomorphic function $f$ such that $f(0)=0$, we have
\begin{equation}
 \lim_{t \to \infty} (f \circ \rho_{J,t}^{-1})^* \left( \frac{\de z}{z} - \frac{\de \zbar}{\zbar} \right) = \left( \frac{\de z}{z} - \frac{\de \zbar}{\zbar} \right).
\end{equation}
This follows from expanding the holomorphic---hence analytic---function $f$ as $f(x) = \sum_{i=0}^\infty \frac{1}{i!}f^{(i)}(0) x^i$
in the following expression:
\begin{equation}
\lim_{t \to \infty} \frac{\de(f (t^{-\frac{J}{J+1}}w))}{f (t^{-\frac{J}{J+1}}w)}=
\frac{\lim_{t \to \infty}f'(t^{-\frac{J}{J+1}}w)  \de w}{ t^{\frac{J}{J+1}}f(t^{-\frac{J}{J+1}}w)}= \frac{f'(0) \de w}{f'(0)w}= \frac{\de w}{w}.
\end{equation}
Secondly, note that for similar reasons
\begin{equation}
 \lim_{t \to \infty} (f \circ \rho_{J,t}^{-1})^*  \left(\frac{|z|}{4} \frac{\de}{\de |z|} \right) = \frac{|z|}{4} \frac{\de}{\de |z|}.
\end{equation}
(Here, it is convenient to use that $|z| \frac{\del}{\del |z|} = z \frac{\del}{\del z} + \zbar \frac{\del}{\del \zbar}$.)

We work separately in each $K \times K$ block.
The computation of the limits is based on the following observation:
\begin{equation}
(\rho_{J,t}^{-1})^* u_{K,i,t} (r) = u_{K,i,t} \left(t^{-\frac{J}{J+1}} r \right) = 
u_{K,i,t} \left(t^{-\frac{K}{K+1}} \; t^{\frac{K}{K+1}-\frac{J}{J+1}}r \right)=
u_{K,i,t=1} \left( t^{\frac{K}{K+1}-\frac{J}{J+1}}r \right)
\end{equation}

We see that if $J<K$, then $\frac{K}{K+1}-\frac{J}{J+1}>0$, so $\lim_{t \to \infty}  t^{\frac{K}{K+1}-\frac{J}{J+1}}r=\infty$. Each function $u_{K, i, t}$ decays to $0$ at $\infty$, hence
\begin{equation}
 \lim_{t \to \infty} (f \circ \rho_{J,t}^{-1})^* \left( \frac{|z|}{4} \frac{\de u_{K,i,t}(|z|)}{\de |z|} \right)= 0,
\end{equation}
and consequently, in the limit of \eqref{eq:operatorlimitlem},
the $K \times K$ block is $A_\infty$.

If $J=K$, then $\frac{K}{K+1}-\frac{J}{J+1}=0$, so $\lim_{t \to \infty}  t^{\frac{K}{K+1}-\frac{J}{J+1}}z=z$. 
Additionally, note that because $f$ is analytic 
\begin{eqnarray}
 \left(f \circ \rho_{K,t}^{-1} \right)^{*} u_{K,i,t}(|z|) &=&u_{K,i,t}\left(\left|f(t^{-\frac{K}{K+1}} z)\right|\right)\\ \nonumber &=& 
  u_t\left(\left|\sum_{k=0}^\infty \frac{1}{k!}\,f^{(k)}(0) \, (t^{-\frac{K}{K+1}} z)^k \right|\right)\\ \nonumber
  &=&  
  u_1\left(\left|\sum_{k=1}^\infty \frac{1}{k!}\,f^{(k)}(0)\, (t^{-\frac{K}{K+1}})^{k-1} \, z^k \right|\right)
\end{eqnarray}
In the last line, we use that $u_t(t^{-\frac{K}{K+1}} r) = u_1(r)$ and $f(0)=0$.
Consequently, taking the limit, we obtain
\begin{equation}
 \lim_{t \to \infty} ( u_t\left(\left|f(t^{-\frac{K}{K+1}} z)\right|\right) = \lim_{t \to \infty} 
 u_1\left(\left|\sum_{k=1}^\infty \frac{1}{k!}\,f^{(k)}(0)\, (t^{-\frac{K}{K+1}})^{k-1} \, z^k \right|\right) =  u_1\left(\left|f'(0)\right| \left| z \right|\right).
\end{equation}
Hence, 
\begin{equation}
 \lim_{t \to \infty} (f \circ \rho_{J,t}^{-1})^* \left( \frac{|z|}{4} \frac{\de u_{K,i,t}(|z|)}{\de |z|} \right)= \left( \frac{|z|}{4} \frac{\de u_{K,i,t=1}(|f'(0)|\;|z|)}{\de |z|} \right);
\end{equation}
in the limit of \eqref{eq:operatorlimitlem},
the $K \times K$ block is $A_\mathrm{mod}$.

Lastly, if $J>K$, then $\frac{K}{K+1}-\frac{J}{J+1}<0$, so $\lim_{t \to \infty}  t^{\frac{K}{K+1}-\frac{J}{J+1}}z=0$. Since $\frac{|z|}{4} \frac{\de u_{K,i,1}(0)}{\de |z|} = \frac{\alpha_{K,i}}{2}$, we have
 \begin{equation}
  \lim_{t \to \infty} (f \circ \rho_{J,t}^{-1})^* \left(- \frac{\alpha_{K,i}}{2} +  \frac{|z|}{4} \frac{\de u_{K,i,t}(|z|)}{\de |z|} \right)
  = 0;
 \end{equation}
in the limit of \eqref{eq:operatorlimitlem},
the $K \times K$ block is $A_0$.
\end{proof}

We now analyze the kernel of $\Delta_{\widetilde{A}}$.
The operator 
$\Delta_{\widetilde{A}}$ on the punctured plane $\C\setminus\{0\}$ is an differential edge operator\footnote{This is roughly because, in polar coordinates, the operator $r^2 \Delta_{\widetilde{A}}$ can be written in terms of $r \del_r$ and $\del_\theta$.}, and so the theory in \cite{MazzeoEdge1, MazzeoWeiss} applies.
We first compute the indicial roots of the operator.
   A number $\nu \in \C$ is called an indicial root for $\Delta_{\widetilde{A}}$
   if there exists some function $\zeta = \zeta(\theta)$ such that
   \begin{equation}
    \Delta_{\widetilde{A}}(r^\nu \zeta(\theta)) = O(r^{\nu-1}),
   \end{equation}
rather than the expected rate $O(r^{\nu-2})$ \cite[Definition 4.2]{MSWW14}.
If $\Delta_{\widetilde{A}} \psi=0$, we see that
$\psi$ has a inhomogeneous asymptotic development around $0$ (or $\infty$)
in terms of the indicial roots of $\Delta_{\widetilde{A}}$ at $0$ (or, respectively, $\infty$).

\begin{lem} \label{lem:indicialroots}
In this basis of $\mathfrak{sl}(n,\C)$,
 the operator $\Delta_{\widetilde{A}}$ fully decouples. On in $(i,j)$ block, $\Delta_{\widetilde{A}}$ acts as
 \begin{equation} \label{eq:decoupledtilde}
  \Delta_{\widetilde{A}} \gamma_{ij} = \left(\de + (\widetilde{A}_{ii} -\widetilde{A}_{jj}) \right)^* \left(\de + (\widetilde{A}_{ii} -\widetilde{A}_{jj}) \right) \gamma_{ij}.
 \end{equation}
The set indicial roots of 
\begin{equation}
 \Delta_{\widetilde{A}}: \Gamma(\I\mathfrak{su}(n)) \rightarrow  \Gamma(\I\mathfrak{su}(n))
\end{equation}
at $|z|=0$  is  $\Gamma(\Delta_{\widetilde{A}}, 0)=\Z \sqcup S_0$,
where $S_0$ is a discrete set, symmetric around the origin, with $S_0 \subset (-1, 1)$.
Similarly, the indicial roots at $|z|=\infty$ is $\Gamma(\Delta_{\widetilde{A}}, \infty)=\Z \sqcup S_\infty$,
where $S_\infty$ is a discrete set, symmetric around the origin, with $S_\infty \subset (-1, 1)$.
\end{lem}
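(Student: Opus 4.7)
The plan is to reduce the lemma to a scalar computation on each matrix entry. First observe that $\widetilde{A}$, as displayed in \eqref{eq:Atilde}--\eqref{eq:lapshape}, is a \emph{diagonal} (not merely block-diagonal) matrix of $1$-forms: each of the constituent summands $A_\infty$, $A_{\mathrm{mod}}$, $A_0$ is diagonal. Write $\widetilde{A}=\mathrm{diag}(\widetilde{A}_{11},\dots,\widetilde{A}_{nn})$. For any $\gamma \in \Gamma(\mathfrak{sl}(n))$ expressed in matrix-unit form $\gamma=\sum \gamma_{ij} E_{ij}$, we have $d_{\widetilde{A}}\gamma=d\gamma+[\widetilde{A},\gamma]$, and the commutator acts entry-wise, giving $(d_{\widetilde{A}}\gamma)_{ij}=d\gamma_{ij}+(\widetilde{A}_{ii}-\widetilde{A}_{jj})\gamma_{ij}$. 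The adjoint $d_{\widetilde{A}}^*$ decouples in the same way, so $\Delta_{\widetilde{A}}=d_{\widetilde{A}}^*d_{\widetilde{A}}$ decouples exactly as claimed in \eqref{eq:decoupledtilde}.

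For each $(i,j)$-block, write $\widetilde{A}_{ii}-\widetilde{A}_{jj}=2\I\,a_{ij}(r)\,d\theta$ using $\tfrac{dz}{z}-\tfrac{d\zbar}{\zbar}=2\I\,d\theta$; here $a_{ij}(r)=a_{ii}(r)-a_{jj}(r)$ is a real-valued radial function. In polar coordinates the relevant scalar Laplacian is $-\partial_r^2-\tfrac{1}{r}\partial_r-\tfrac{1}{r^2}(\partial_\theta+2\I a_{ij}(r))^2$, so plugging in $r^\nu e^{\I k\theta}$ gives
\begin{equation*}
\Delta_{\widetilde{A}}(r^\nu e^{\I k\theta})=r^{\nu-2}\bigl[-\nu^2+(k+2a_{ij}(r))^2\bigr]e^{\I k\theta}.
\end{equation*}
Replacing $a_{ij}(r)$ by $a_{ij}(0)$ in the bracket gives the indicial operator at $0$, and the indicial roots are $\nu=\pm(k+2a_{ij}(0))$ for $k\in\Z$. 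The analogous computation at infinity uses $a_{ij}(\infty)=\lim_{r\to\infty}a_{ij}(r)$ (after the conformal change $s=1/r$), giving $\nu=\pm(k+2a_{ij}(\infty))$.

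The key remaining step is to compute the boundary values $a_{ij}(0)$ and $a_{ij}(\infty)$ in each of the three block types and to show that in every case $|2a_{ij}(0)|<1$ and $|2a_{ij}(\infty)|<1$. For $A_\infty$-blocks, $a_{ii}(r)=-\alpha_{K,i}/2$ is constant, so $2a_{ij}(r)=\alpha_{K_j,j'}-\alpha_{K_i,i'}$; since each $\alpha_{K,i}=(2i-K-1)/(2K)\in(-\tfrac12,\tfrac12)$, the difference is in $(-1,1)$. For $A_0$-blocks, $a_{ii}\equiv 0$. For $A_{\mathrm{mod}}$-blocks one uses the boundary conditions in Proposition \ref{prop:MK1model}: near $0$, $u_{K,i,1}(s)\sim 2\alpha_{K,i}\log s$, so $(s/4)\,u_{K,i,1}'(s)\to \alpha_{K,i}/2$, cancelling against the $-\alpha_{K,i}/2$ in $A_{\mathrm{mod}}$ and yielding $a_{ii}(0)=0$; near $\infty$, $u_{K,i,1}$ decays exponentially so $(s/4)\,u_{K,i,1}'(s)\to 0$ and $a_{ii}(\infty)=-\alpha_{K,i}/2$ (matching $A_\infty$). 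Combining these, every off-block off-diagonal $2a_{ij}(0)$ or $2a_{ij}(\infty)$ lies in $(-1,1)$. Collecting the roots, the indicial set is $\{k+s:k\in\Z,\,s\in S_0\}$ where $S_0=\bigl\{\pm 2a_{ij}(0)\bigr\}_{i,j}\subset(-1,1)$ is discrete and symmetric about $0$, with the analogous conclusion at $\infty$.

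The main obstacle is bookkeeping: one must verify the asymptotics of $a_{ii}(r)$ simultaneously at $0$ and $\infty$ for all three block types, including the mixed entries $(i,j)$ where $i$ and $j$ lie in blocks of different types—this is where the rates $u_{K,i,1}(s)\sim 2\alpha_{K,i}\log s$ and the exponential decay of $u_{K,i,1}$ at $\infty$ (Proposition \ref{prop:MK1model}) are crucial, since they ensure the $(r/4)\,du/dr$ correction exactly cancels the $-\alpha_{K,i}/2$ at one end and vanishes at the other. Once these asymptotics are tabulated, the bound $|2a_{ij}|<1$ is immediate from the elementary estimate $|\alpha_{K,i}|<\tfrac12$, and the structure $\Gamma=\Z\sqcup S_0$ (respectively $\Z\sqcup S_\infty$) follows.
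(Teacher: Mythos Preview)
Your approach is essentially the same as the paper's: decouple $\Delta_{\widetilde{A}}$ using the diagonality of $\widetilde{A}$, write each entry's operator in polar coordinates, and read off the indicial roots $\nu=\pm(k+2a_{ij})$ from the scalar symbol; then bound the offsets by $|\alpha_{K,i}|<\tfrac12$. Your explicit tabulation of $a_{ii}(0)$ and $a_{ii}(\infty)$ for the three block types (using the asymptotics of $u_{K,i}$ from Proposition~\ref{prop:MK1model}) matches what the paper asserts more tersely when it writes $f_i(0)=0$ for $A_{\mathrm{mod}}$ and $A_0$, and $f_i(\infty)=-\alpha_{K,i}/2$ for $A_\infty$ and $A_{\mathrm{mod}}$.

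One point of departure: the paper explicitly restricts to $\I\mathfrak{su}(n)$ by pairing the $(i,j)$ and $(j,i)$ entries via $\gamma_{ji}=\overline{\gamma_{ij}}$, and uses this to argue that for $b_{ij}\neq 0$ the only surviving indicial roots are $\pm 2b_{ij}$ (rather than the full shifted lattice $\Z\pm 2b_{ij}$), which is how the paper arrives at the precise form $\Gamma=\Z\sqcup S_0$. You skip this pairing and instead write the indicial set as $\{k+s:k\in\Z,\ s\in S_0\}$, then assert that $\Gamma=\Z\sqcup S_0$ ``follows''; that last implication is not literally what you computed. For the downstream application (Proposition~\ref{prop:nosoln}) the distinction is immaterial---what matters is that the only indicial root in a small interval $(-\delta,\delta)$ is $0$, and that holds either way since $S_0\subset(-1,1)$ is discrete and avoids $0$---but if you want to match the lemma as stated, you should either carry out the $\I\mathfrak{su}(n)$ pairing as the paper does, or note that the statement is being interpreted modulo integer shifts.
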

\begin{proof}
The operator
$\Delta_{\widetilde{A}}$ decouples as in \eqref{eq:decoupledtilde} because $\widetilde{A}$ is diagonal. Because $\widetilde{A}_{jj} = 2\I f_i(r) \de \theta$, 
we can compute  that
\begin{eqnarray} \label{eq:decoupled1}
  \Delta_{\widetilde{A}} \gamma_{ij} &=& \left(\de + (\widetilde{A}_{ii} -\widetilde{A}_{jj}) \right)^* \left(\de + (\widetilde{A}_{ii} -\widetilde{A}_{jj}) \right) \gamma_{ij}\\ \nonumber
 &=&
 r^{-2} \left( (r \del_r)^2  + \left( \del_ \theta + 2\I (f_i-f_j) \right)^2 \right) \gamma. 
\end{eqnarray}

To compute the indicial roots at $0$ for $\Delta_{\widetilde{A}}$ acting on $\Gamma(\mathfrak{sl}(n,\C)$, we only need to look at the highest order part of $\Delta_{\widetilde{A}}$ at $0$. We evaluate the function $f_i(r)-f_j(r)$ appearing in \eqref{eq:decoupled1} at $r=0$.
In a $K \times K$ block of $A_\infty$, $f_i(0)=-\frac{\alpha_{K,i}}{2}$; for both $A_{\mathrm{mod}}$ and $A_0$, this constant is $0$.
Then, taking $b_{ij}=f_i(0)-f_j(0)$, we see the relevant operator is 
\begin{eqnarray} \label{eq:operator}
 \left( (r \del_r)^2+  \left( \del_ \theta + 2\I b_{ij} \right)^2 \right) \gamma.
\end{eqnarray}
 Suppose $\nu$ is an indicial root at $0$ in $(i,j)$ block. 
 Then there is some function 
 $\zeta(\theta) = \sum_{\ell \in \Z} a_\ell \e^{i \ell \theta}$ such that 
 $\left.\Delta_{\widetilde{A}}\right|_{ij} (r^\nu \zeta(\theta)) = O(r^{\nu-1})$. Taking $\zeta(\theta) = \e^{\I \ell \theta}$,
\begin{eqnarray}
 0
 &=& \left( (r \del_r)^2 + \left( \del_ \theta + 2\I b_{ij} \right)^2 \right)(r^\nu \e^{i \ell \theta})\\ \nonumber
  &=& r^\nu \e^{i \ell \theta} \left( \nu^2 - ( \ell + 2 b_{ij})^2 \right).
\end{eqnarray}
Consequently, since $\ell \in \Z$, $\nu \in \{\Z + 2b_{ij}\} \cup \{\Z - 2b_{ij}\}$.

Further restricting to $\I\mathfrak{su}(n)$, we note that $\gamma_{ji} = \overline{\gamma}_{ij}$. We compute the indicial roots at $0$ for the direct sum of the $(i,j)$-block with the $(j,i)$-block. Letting $\gamma_{ij} = r^\nu \zeta(\theta)$, and taking $\zeta(\theta) = a_\ell \e^{i \ell \theta} + a_{-\ell} \e^{-i \ell \theta}$ we have 
\begin{eqnarray}
 \left.\Delta_{\widetilde{A}}\right|_{ij \oplus ji} \begin{pmatrix} r^\nu \zeta(\theta) \\ r^\nu \overline{\zeta(\theta)}\end{pmatrix}  &=&
 \begin{pmatrix} 
  \left( r^2 \del_r^2  + r \del_r + \left( \del_ \theta + 2\I b_{ij} \right)^2 \right) r^\nu (a_\ell \e^{i \ell \theta} + a_{-\ell} \e^{-i \ell \theta}) \\ 
   \left( r^2 \del_r^2  + r \del_r + \left( \del_ \theta - 2\I b_{ij} \right)^2 \right) r^\nu (\overline{a}_\ell \e^{-i \ell \theta} + \overline{a}_{-\ell} \e^{i \ell \theta})
 \end{pmatrix}\\
 &=& \begin{pmatrix}
      r^\nu a_{\ell}\e^{i \ell \theta} \left( \nu^2 - ( \ell + 2 b_{ij})^2 \right)
      + r^\nu a_{-\ell}\e^{-i \ell \theta} \left( \nu^2 - (-\ell + 2 b_{ij})^2 \right)\\ \nonumber
            r^\nu \overline{a}_{\ell}\e^{-i \ell \theta} \left( \nu^2 - ( -\ell + 2 b_{ij})^2 \right)
      + r^\nu \overline{a}_{-\ell}\e^{i \ell \theta} \left( \nu^2 - (\ell + 2 b_{ij})^2 \right)
     \end{pmatrix}
\end{eqnarray}
Thus, we see that $\nu \in \{\ell + 2b_{ij}, -\ell - 2b_{ij}\} \cap  \{-\ell + 2b_{ij}, \ell - 2b_{ij}\}$. 
Thus, as claimed:
\begin{itemize}
 \item If $b_{ij}=0$, then $\nu \in \Z$; 
 the indicial root $\nu$ comes from
 $r^\nu \zeta(\theta)$ where $\zeta(\theta) = a_\nu \e^{\I  \nu \theta} + a_{-\nu} \e^{-\I \nu \theta}$. (Note that the $\nu=0$ indicial root comes from $a_0 + \widehat{a}_0 \log r$.)
 \item If $b_{ij} \neq 0$, then $\nu \in \{\pm 2b_{ij}\}$;
 the indicial root $\nu$ comes from
 $r^{\nu} \zeta(\theta)$
 where $\zeta(\theta)$ is constant. 
\end{itemize}
It is worth noting that because $\alpha_{K,i} \in (- \frac{1}{2}, \frac{1}{2})$, 
we automatically have that $b_{ij} \in (- \frac{1}{2}, \frac{1}{2})$ as well.
Hence, the indicial roots of $\Delta_{\widetilde{A}}$  at $0$ are  $\Z \sqcup S_0$,
where $S_0$ is a discrete set, symmetric around the origin, with $S_0 \subset (-1, 1)$.

\bigskip

The computation of the indicial roots at $\infty$ is similar.
We let $v=z^{-1}$ and introduce polar coordinates $v=s\e^{\I \vartheta}$. In these coordinates, the operator in \eqref{eq:decoupled1} is  
\begin{equation}
 r^2  \Delta_{\widetilde{A}} \gamma_{ij} =  
 \left( (s \del_s)^2  + \left( -\del_\vartheta + 2\I (f_i-f_j) \right)^2 \right) \gamma.
\end{equation}
(Here, we used that $r \del_r = -s \del_s$ and $\del_\theta = -\del_\vartheta$.)
The computation of the indicial roots is similar.
In a $K \times K$ block of $A_\infty$ or $A_{\mathrm{mod}}$, $f_i(\infty)=-\frac{\alpha_{K,i}}{2}$; for $A_0$, this constant is $0$.
Define the constant $c_{ij}=\left.f_i-f_j\right|_{r=\infty}$.
Thus, as claimed:
\begin{itemize}
 \item If $c_{ij}=0$, then $\nu \in \Z$; 
 the indicial root $\nu$ comes from
 $s^\nu \zeta(\vartheta)$ where $\zeta(\vartheta) = a_\nu \e^{\I  \nu \vartheta} + a_{-\nu} \e^{-\I \nu \vartheta}$.
 \item If $c_{ij} \neq 0$, then $\nu \in \{\pm 2c_{ij}\}$;
 the indicial root $\nu$ comes from
 $s^{\nu} \zeta(\vartheta)$ where $\zeta(\vartheta) = a_0.$
\end{itemize}
Hence, the indicial roots of $\Delta_{\widetilde{A}}$ at $r=\infty$ are $\Z \sqcup S_\infty$,
where $S_\infty$ is a discrete set, symmetric around the origin, with $S_\infty \subset (-1, 1)$.
\end{proof}

Using the computation of the indicial roots in Lemma \ref{lem:indicialroots}, we now prove:

\begin{prop}\label{prop:nosoln}
There exists a $\delta>0$ such that there are no non-zero solutions $\psi \in \Gamma(i \mathfrak{su}(n))$ solving
\begin{equation} \label{eq:nosoln}
 \Delta_{\widetilde{A}} \psi =0 \qquad |\psi | \leq (\eps +  |z|^2)^{-\delta/2}
\end{equation}
where $\eps =0,1$.
\end{prop}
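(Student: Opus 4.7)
The strategy is to show via integration by parts that any solution $\psi$ with the stated pointwise decay must satisfy $d_{\widetilde{A}}\psi \equiv 0$, and then to exploit the unitarity of $\widetilde{A}$ together with $|\psi| \to 0$ at infinity to conclude $\psi \equiv 0$. The block decoupling in \eqref{eq:decoupledtilde} reduces the problem to each $(i,j)$-entry $\psi_{ij}$, where $\Delta_{\widetilde{A}}|_{ij} = (d + B_{ij})^*(d + B_{ij})$ with $B_{ij} = \widetilde{A}_{ii} - \widetilde{A}_{jj}$ having only a $d\theta$-component.

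I would fix $\delta \in (0,1)$ strictly smaller than the distance from $0$ to the finite set $S_0 \cup S_\infty \subset (-1,1)\setminus\{0\}$ of non-integer indicial roots produced by Lemma \ref{lem:indicialroots}. Applying the polyhomogeneous expansion theory for edge operators \cite{MazzeoEdge1, MazzeoWeiss} gives asymptotic expansions of $\psi_{ij}$ at both $0$ and $\infty$ in terms of those indicial roots. In blocks where $b_{ij}(0) \neq 0$, the indicial roots at $0$ lie in $S_0$, and the bound $|\psi| \leq (\eps + |z|^2)^{-\delta/2}$ together with the choice of $\delta$ forces the leading term at $0$ to correspond to a strictly positive indicial root; similarly at $\infty$, when $b_{ij}(\infty) \neq 0$, the leading indicial root (measured in $s = |z|^{-1}$) is strictly positive. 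Hence $\psi_{ij}$ decays as a positive power of $r$ at $0$ and of $s$ at $\infty$ in these blocks.

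The residual blocks are those where $b_{ij}(0) = 0$, in which the indicial roots at $0$ are integers and potentially admit a $\log r$-mode. If $B_{ij} \equiv 0$, then $\Delta_{\widetilde{A}}|_{ij}$ reduces to the scalar Laplacian on $\C \setminus \{0\}$ and $\psi_{ij}$ is harmonic; a direct Laurent expansion, using $\psi_{ij} \to 0$ at infinity to kill the $r^n$, constant, and $\log r$ modes, and using $|\psi_{ij}| \leq |z|^{-\delta}$ with $\delta < 1$ to kill all $r^{-n}$ modes for $n \geq 1$, forces $\psi_{ij} \equiv 0$. In the remaining blocks where $B_{ij} \not\equiv 0$ but $b_{ij}(0) = 0$ (which can arise between mixed $A_{\mathrm{mod}}$ entries), a finer Fourier-mode-by-mode ODE analysis shows that any formal $\log r$-mode at $0$ propagates to a solution that fails to decay at $\infty$, contradicting the pointwise bound.

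For the blocks controlled by power decay at both ends, I would integrate by parts on the annulus $A_{\eta, R} = \{\eta \leq |z| \leq R\}$: using $\Delta_{\widetilde{A}} = d_{\widetilde{A}}^* d_{\widetilde{A}}$ and that $\widetilde{A}$ has no $dr$-component,
\begin{equation}
 \int_{A_{\eta, R}} |d_{\widetilde{A}}\psi|^2 \,dV = -\int_{|z|=\eta}\langle \partial_r\psi, \psi\rangle\,\eta\,d\theta + \int_{|z|=R}\langle \partial_r\psi, \psi\rangle\,R\,d\theta.
\end{equation}
The power decay makes both boundary integrands $o(1)$ as $\eta \to 0$ and $R \to \infty$ respectively, so $\|d_{\widetilde{A}}\psi\|^2_{L^2(\C)} = 0$ and $d_{\widetilde{A}}\psi \equiv 0$. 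Since $\widetilde{A}$ is a hermitian connection and $\psi\in\Gamma(i\mathfrak{su}(n))$, the identity $d|\psi|^2 = 2\Re\langle d_{\widetilde{A}}\psi, \psi\rangle = 0$ shows that $|\psi|$ is constant on $\C$, and $|\psi| \to 0$ at infinity then forces $\psi \equiv 0$. The main obstacle is precisely the $\log r$-mode issue described in the previous paragraph, which must be ruled out by either the Laurent argument or the Fourier-mode ODE analysis before the clean integration-by-parts conclusion applies.
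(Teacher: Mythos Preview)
Your overall strategy matches the paper's: choose $\delta$ strictly below the smallest element of $S_0\cup S_\infty$, use the polyhomogeneous expansion from the indicial-root computation to control the boundary terms in an integration by parts on annuli, reduce $\Delta_{\widetilde{A}}\psi=0$ to $d_{\widetilde{A}}\psi=0$, and then conclude $\psi\equiv 0$. The two proofs differ in how they organize the boundary-term analysis and in the endgame.

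For the endgame, your observation that $d_{\widetilde{A}}\psi=0$ with $\widetilde{A}$ unitary forces $|\psi|$ to be constant, hence zero by the decay at infinity, is slicker than what the paper does. The paper instead analyzes $(d_{\widetilde{A}}\psi)_{ij}=0$ entry by entry: writing $\widetilde{A}_{ii}-\widetilde{A}_{jj}=2i(f_i-f_j)\,d\theta$, it splits into the cases $f_i-f_j\equiv 0$ (then $\psi_{ij}$ is constant and killed by decay), $f_i-f_j$ non-constant (then $\partial_r((f_i-f_j)\psi_{ij})=0$ together with $\partial_r\psi_{ij}=0$ forces $\psi_{ij}=0$), and $f_i-f_j$ a non-zero constant (ruled out because $|f_i-f_j|<1$ so the candidate $e^{2i(f_j-f_i)\theta}$ is not single-valued). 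Your argument is shorter; the paper's buys an explicit reason in each case without invoking the metric compatibility of $\widetilde{A}$.

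For the boundary terms, you try to quarantine the blocks with $b_{ij}(0)=0$ and handle them separately---via a direct Laurent argument when $B_{ij}\equiv 0$, and via an unspecified ``Fourier-mode-by-mode ODE analysis'' otherwise---before running the clean integration by parts on the remaining blocks. The paper does not separate blocks: it integrates by parts uniformly and argues that the full boundary term $\int_{S^1_r}\star\langle d_{\widetilde{A}}\psi,\psi\rangle$ vanishes as $r\to 0,\infty$. The key observation there is that on $S^1_r$ only the $d\theta$-component $\langle\partial_r\psi,\psi\rangle\,r\,d\theta$ contributes, and the $dr$-component reduces (using $\psi\in\Gamma(i\mathfrak{su}(n))$) to $\langle\partial_\theta\psi,\psi\rangle r^{-1}\,dr$; the indicial-root computation in Lemma~\ref{lem:indicialroots} shows that every term in the expansion with exponent $<1$ is $\theta$-independent, so the potentially singular pieces cancel. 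This lets the paper avoid your block-by-block preprocessing entirely, and in particular makes your vague ODE step unnecessary.
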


\begin{proof}
In the proof, we will integrate by parts and then conclude that there is no solution of $\de_{\widetilde{A}} \psi =0$ solving the above bound. Because
\begin{equation}\label{eq:IBP3}
 \star \IP{\Delta_{\widetilde{A}} \psi, \psi} = \star \norm{\de_{\widetilde{A}} \psi}^2 +  \de \star \IP{ \de_{\widetilde{A}} \psi, \psi},
\end{equation}
we will additionally need to show that 
$\lim_{r \to 0, \infty}
\int_{S^1_r} \star \IP{\de_{\widetilde{A}} \psi, \psi}=0$.

Choose $\delta$ satisfying
\begin{equation}
 \delta < \min\{ |\nu| : \nu \in S_0 \cup S_\infty\},
\end{equation}
noting that $S_0 \cup S_\infty$ is a discrete set that does not contain $0$.
Let $N_0=(S_0 \cup \Z) \cap (-\delta, \infty)$.
Similarly, let $N_\infty = (S_\infty \cup \Z) \cap (\delta, \infty)$.

The bound at $0$ implies that $\psi$ admits an asymptotic development around $|z|=0$ like 
\begin{equation}
 \psi \sim \sum_{\nu \in N_0 +\mathbb{N}}  r^\nu \zeta_\nu(\theta).
\end{equation}
As shown in Figure \ref{fig:indicialroot0}, because of our choice of $\delta$, $N_0 + \mathbb{N} \subset [0, \infty)$.
  \begin{figure}[ht]
  \begin{center}
   \includegraphics[width=2.5in]{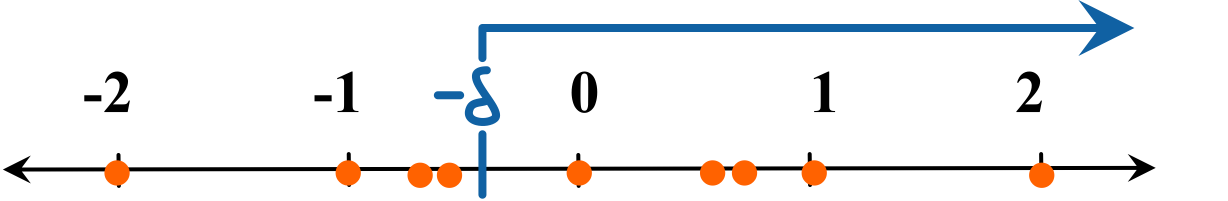} 
   \caption{\label{fig:indicialroot0}The indicial roots at $0$ are $S_0 \sqcup \Z$. With the given choice of $\delta$, the set of all powers in the expansion are nonnegative.} 
  \end{center}
 \end{figure}
 
 Furthering studying the $\star \IP{ \de_{\widetilde{A}} \psi, \psi}$ term in \eqref{eq:IBP3},
let $\widetilde{A} = \Xi(r) 2i \de \theta$. Then,  that
\begin{eqnarray}\label{eq:IBP2}
 \star \IP{\de_{\widetilde{A}} \psi, \psi} &=&
 \star  \IP{\del_r \psi \de r + \del_{\theta} \psi \de \theta + 2\I[\Xi, \psi] \de \theta, \psi}\\ \nonumber
 &=&   \IP{\del_r \psi, \psi} r \de \theta  - \IP{\del_\theta \psi + 2\I[\Xi, \psi],\psi} r^{-1} \de r.
\end{eqnarray}
To see that $\lim \limits_{r \to 0} \int_{S^1_r} \star \IP{\de_{\widetilde{A}} \psi, \psi}=0$,
note that the $\IP{\del_r \psi, \psi} r \de \theta$ term tends to zero pointwise. If we integrate on $S^1_r$, then the $\de r$ component does not matter.
However, we want to show that it's not problematic to perturb the loop $S^1_r$---even though the indicial roots $\nu \leq 1$ look problematic because of the ``$r^{-1}$''.
Since $\psi \in \Gamma(\I \mathfrak{su}(n))$, 
it is a straightforward computation to check that
\begin{equation} \label{eq:samereason}
  \IP{\del_\theta \psi + 2\I[\Xi, \psi],\psi} r^{-1} \de r = \IP{\del_\theta \psi, \psi} r^{-1} \de r.
\end{equation}
From the indicial root computation in Lemma \ref{lem:indicialroots}, we see that if $\nu<1$, then $\zeta'_\nu(\theta)=0$; thus the problematic-looking terms in \eqref{eq:IBP2} vanish.
It follows that $\lim \limits_{r \to 0} \int_{S^1_r} \star \IP{\de_{\widetilde{A}} \psi, \psi}=0$.

Similarly, in the coordinate $v=z^{-1} = s \e^{\I \vartheta}$, the bound on $\psi$ is given by $\psi(v) \leq |v|^\delta$.
As shown in Figure \ref{fig:indicialrootinf}, because of our choice of $\delta$, $N_\infty + \mathbb{N} \subset (0, \infty)$.
  \begin{figure}[ht]
  \begin{center}
   \includegraphics[width=2.5in]{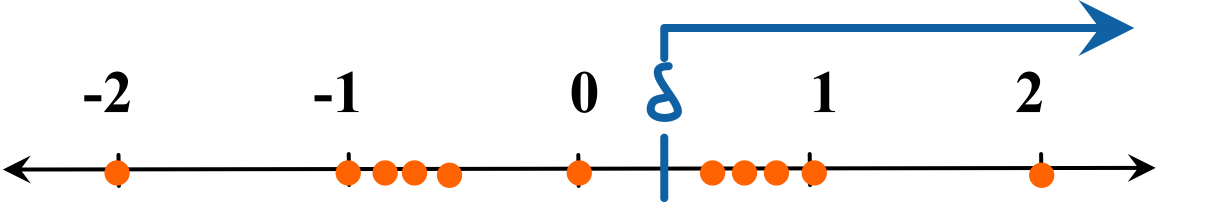} 
   \caption{\label{fig:indicialrootinf}The indicial roots at $\infty$ are $S_\infty \sqcup \Z$. With the given choice of $\delta$, the set $N_\infty$ of all permissible indicial roots are positive.} 
  \end{center}
 \end{figure}
Using a similar computation, we can see that $\lim \limits_{r \to \infty} \int_{S^1_r} \star \IP{\de_{\widetilde{A}} \psi, \psi}=0$.

Consequently, by \eqref{eq:IBP3}, we can equivalently show that there is no non-zero solution $\psi \in \Gamma(i \mathfrak{su}(n))$ solving
\begin{equation} \label{eq:equivalentproblem}
 \de_{\widetilde{A}} \psi =0 \qquad |\psi | \leq (\eps +  |z|^2)^{-\delta/2}
\end{equation}
where $\eps =0,1$. We now analyze now analyze this problem.
The $(i,j)$ block of $\de_{\widetilde{A}}\gamma$ is 
\begin{equation}
0=(\de_{\widetilde{A}} \gamma)_{ij} = \del_r \gamma_{ij} \de r + \del_\theta \gamma_{ij} \de \theta  + (f_i -f_j) \gamma_{ij} 2 \I \de \theta. 
\end{equation}
Note the following consequence:
\begin{equation} \label{eq:consequence}
 0 = \del_\theta ( \del_r \gamma_{ij}) = \del_r (\del_\theta \gamma_{ij}) = \del_r (2\I(f_j - f_i) \gamma_{ij}).
\end{equation}
Consequently, there are three distinct cases.
\begin{enumerate}
 \item If $f_j-f_i=0$, then $\gamma_{ij}$ is constant. Imposing the asymptotic decay condition, we see that $\gamma_{ij}=0$.
 \item If $f_j-f_i$ is not constant, it follows  
  that $\gamma_{ij}=0$ (using \eqref{eq:consequence} and $\del_r \gamma_{ij}=0$).
 \item If $f_j-f_i$ is a non-zero constant, then we see that we'd like to take  $\gamma_{ij}=c\e^{2\I \theta (f_j-f_i)}$. However, this is a function on the punctured-plane if, and only if $f_j-f_i\in \Z$.  However, note that for each function $f_i(r)$, $f_i(0) \in (-\frac{1}{2}, \frac{1}{2})$. Consequently, this case is not possible. 
\end{enumerate}
Note that in the last two cases we did not use the decay condition.

Thus, we've proved that there is no non-zero solution of \eqref{eq:equivalentproblem} and thereby no non-zero solution of \eqref{eq:nosoln}.
\end{proof}

\begin{rem}
 The proof is easier in the case where there are no blocks $\mathrm{A}_{\mathrm{mod}}$ appearing in $\widetilde{A}$. In this case, $\Delta_{\widetilde{A}}$ is a dilation-covariant 
 conic operator.
Solutions
are superpositions of functions of the form $r^\nu \zeta_\nu(\theta)$
where $\nu$ is an indicial root of the operator.
The bounds at $0$ force $\nu\geq 0$, while the bounds at $\infty$ force $\nu<0$.
These are incompatible, hence the zero solution is the only solution.
\end{rem}

\section{Perturbation to a solution of Hitchin's equations}\label{sec:main}

In this section we prove that $h_t^\app$ is close to the harmonic metric $h_t$ in the space of hermitian metrics.
We work in unitary formulation of Hitchin's equations, discussed at the beginning of  \S\ref{sec:linearization}.
\begin{figure}[ht]
\includegraphics[height=1in]{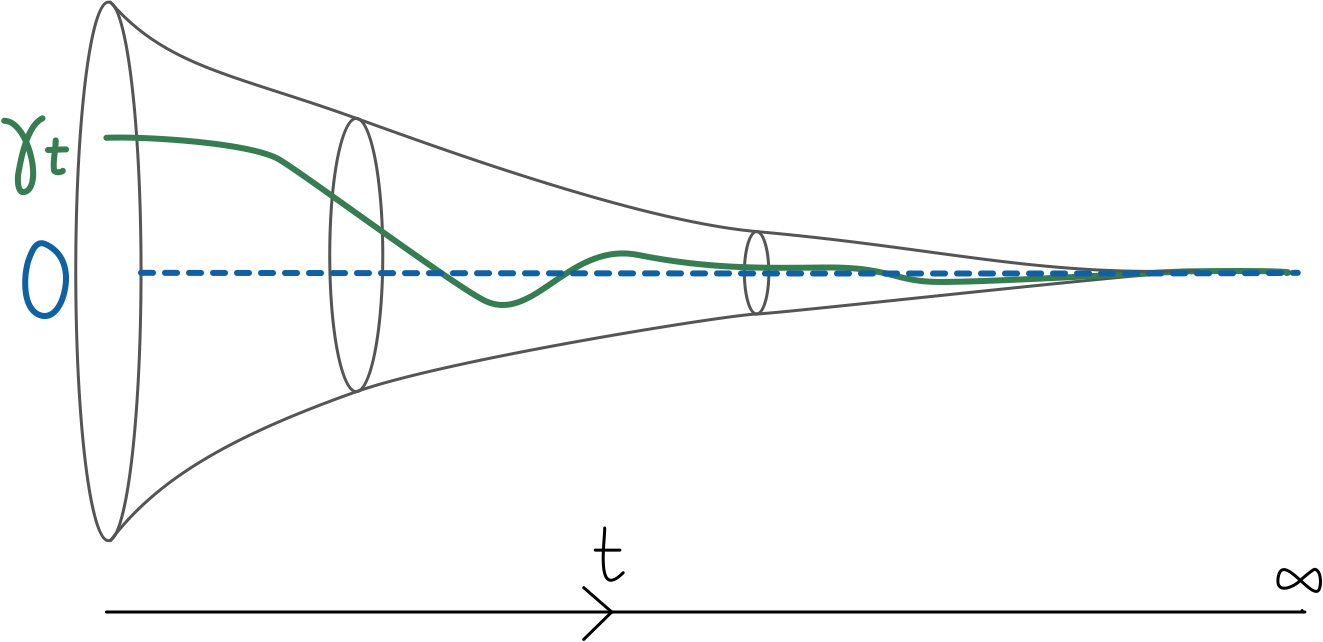}
\caption{\label{fig:mainthm}
Theorem \ref{thm:main} describes the size of $\gamma_t$ solving $\mathbf{F}_t^\app(\gamma_t)=0$, i.e. solving Hitchin's equations.
Alternatively, it describes the relation between $h_t$ and $h_t^\app$
in the space $\{ \mbox{hermitian metrics} \} \times \R_t^{+}$.  
In this language, the  dotted blue curve would be labeled $h_t^\app$ and
the solid green curve would be labeled $h_t$.
}
\end{figure}

\begin{mainthm}\label{thm:main}
Fix a Higgs bundle $(\delbar_E, \varphi) \in \cM'$ and let $\delta$ be the constant in Proposition \ref{defpropreg}.
Given any $\eps>0$ and choice of $t_0$, there exists a constant $C$ such that for $t>t_0$
there is a unique $h_0$-hermitian $\gamma_t$ satisfying
$\|\gamma_t\|_{H^2\left(i \mathfrak{su}(E)\right)} \leq C \e^{(-\delta + \eps) t}$,
such that  $\mathbf{F}^\app_t(\gamma_t)=0$, i.e. $(\de_{A_t^{\exp (-\gamma_t) }}, \Phi_t^{\exp(-\gamma_t)}))$ solves
Hitchin's equations.  (Equivalently, $h_t(v,w)=h_t^\app(\e^{-\gamma_t} v, \e^{-\gamma_t} w)$
is harmonic.)
Moreover, there exists a constant $C'$ such that $\gamma_t$ is unique in the the ball of radius $C' t^{-4-\eps}$.
\end{mainthm}

Theorem \ref{thm:main} will be  proved 
using a contraction mapping argument, as
in \cite{MSWW14}.
The map $\mathbf{F}_t^\app$ defined in \eqref{eq:Fapp} is naturally a map between the following Sobolev spaces
\begin{eqnarray}
 \mathbf{F}^\app_t: 
 H^2(i \mathfrak{su}(E)) &\rightarrow& L^2(i \mathfrak{su}(E)).
\end{eqnarray}
Observe that $\textbf{F}^\app_t(\gamma_t)=0$ if, and only if, 
$\gamma_t$ is a fixed point of the map
\begin{eqnarray}
\mathbf{T}_t: H^2(i \mathfrak{su}(E)) &\rightarrow& H^2(i \mathfrak{su}(E).\\ \nonumber
 \gamma &\mapsto& \gamma- L_t^{-1} 
\left(\textbf{F}^\app_t(\gamma)  \right).
\end{eqnarray}
Expanding $\mathbf{F}_t^\app$
into a constant, linear, and non-linear term
$ \textbf{F}^\app_t$
\begin{equation}\label{eq:sum}
 \mathbf{F}_t^\app(\gamma)=\mathbf{F}_t^\app(0)+L_t(\gamma) + Q_t(\gamma),
\end{equation}
the map $\mathbf{T}_t$ is
\begin{equation}\label{eq:T}
 \mathbf{T}_t(\gamma)=-(L_t)^{-1} (\mathbf{F}_t^\app(0) + Q_t(\gamma)).
\end{equation}
To show there is some ball $B_{\rho_t} \in H^2(i \mathfrak{su}(E))$ 
centered at the zero section (corresponding to $h_t^\app$) on which $\mathbf{T}_t$
is a contraction mapping of $B_{\rho_t}$,
we additionally need an estimate on the nonlinear terms in the expansion of the operator $\mathbf{T}_t$ in \eqref{eq:T}.

\subsection{Estimates for nonlinear terms}
We prove the analog of \cite[Lemma 6.8]{MSWW14} for the case of regular $SL(n,\C)$-Higgs bundles. 
\begin{lem}\label{lem:MSWW68}
The approximate solution satisfies
\begin{equation} \label{eq:appbound}
 \norm{A_t}_{C^1} \leq Ct
\end{equation}
on the disk $\bD$, so that for any $H^{k+1}$ section $\gamma$, $k=0,1$,
\begin{equation} \label{eq:bound12}
 \norm{\de_{A_t} \gamma }_{H^k} \leq Ct \norm{\gamma}_{H^{k+1}},
\end{equation}
and moreover,
\begin{equation} \label{eq:bound13}
\norm{L_t \gamma}_{L^2} \leq Ct^2 \norm{\gamma}_{H^2}.
\end{equation}
\end{lem}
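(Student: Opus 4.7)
The plan is to establish the three estimates in order, treating \eqref{eq:appbound} as the fundamental input that feeds into \eqref{eq:bound12} and \eqref{eq:bound13} via standard Sobolev multiplicative inequalities. For \eqref{eq:appbound}, I would work separately on each disk $\bD_p$ around a ramification point and on the exterior $C^\rext$. On $C^\rext$ the approximate connection $A_t^\app$ arises from the (smooth, $t$-independent) Chern connection of $h_\ell$, so its $C^1$ norm is bounded by an absolute constant. On a disk $\bD_p$, I would use the explicit unitary-gauge expression of Remark \ref{rem:regunitarygauge}: on each $K_j\times K_j$ block the diagonal coefficient is $c_{K,i,t}(|z|)=-\frac{\alpha_{K,i}}{2}+\frac{|z|}{4}\frac{\de(\chi u_{K,i,t})}{\de|z|}$ multiplied by $\frac{\de z}{z}-\frac{\de\bar z}{\bar z}$. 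Under the natural rescaling $w=t^{K/(K+1)}|z|$ from \eqref{eq:regrescale}, $c_{K,i,t}$ becomes the $t$-independent function $\widetilde c(w)=-\frac{\alpha_{K,i}}{2}+\frac{w}{4}u'_{K,i,1}(w)$. Using the local expansion $u_{K,i,1}(w)=2\alpha_{K,i}\log w+v(w)$ with $v$ smooth in $w^2$, one verifies that $\widetilde c(w)=O(w^2)$ near $w=0$ (which cancels the apparent singularity of $\frac{\de z}{z}-\frac{\de\bar z}{\bar z}$) and that $\widetilde c(w)\to-\alpha_{K,i}/2$ exponentially as $w\to\infty$. Each differentiation in $z$ contributes one factor of $t^{K/(K+1)}$ via the chain rule, so tracking these scalings together with the $O(w^2)$ cancellation near the ramification point produces the stated polynomial-in-$t$ bound, which is absorbed into $Ct$ since $K/(K+1)\leq n/(n+1)<1$.

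For \eqref{eq:bound12}, decompose $\de_{A_t}\gamma=\de\gamma+[A_t,\gamma]$. The first piece is dominated by $\norm{\gamma}_{H^{k+1}}$, so only the commutator carries $t$-growth. For $k=0$, H\"older gives $\norm{[A_t,\gamma]}_{L^2}\leq 2\norm{A_t}_{L^\infty}\norm{\gamma}_{L^2}$. For $k=1$, the Leibniz rule combined with the two-dimensional Sobolev embedding $H^2\hookrightarrow L^\infty$ yields $\norm{[A_t,\gamma]}_{H^1}\leq C(\norm{A_t}_{C^1}\norm{\gamma}_{L^\infty}+\norm{A_t}_{L^\infty}\norm{\gamma}_{H^1})\leq C\norm{A_t}_{C^1}\norm{\gamma}_{H^2}$. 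Invoking \eqref{eq:appbound} in each case produces \eqref{eq:bound12}.

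For \eqref{eq:bound13}, write $L_t\gamma=\de_{A_t}^{\ast}\de_{A_t}\gamma-\I\star t^2 M_{\Phi_t}\gamma$. Iterating \eqref{eq:bound12} (noting that $\de_{A_t}^\ast$ differs from $-\star\de_{A_t}\star$ only by zeroth-order terms bounded by $A_t$) yields $\norm{\de_{A_t}^\ast\de_{A_t}\gamma}_{L^2}\leq Ct\norm{\de_{A_t}\gamma}_{H^1}\leq Ct^2\norm{\gamma}_{H^2}$. For the Higgs term, the unitary-gauge entries of $\Phi_t$ in \eqref{eq:regmodunitary} are uniformly bounded on $\bD_p$: the same rescaling $w=t^{K/(K+1)}|z|$ absorbs the $t$-factor coming from $t\varphi$ in the matching complex gauge transformation, and direct inspection of the entries $|z|^{1/K}\e^{(u_{K,i,t}-u_{K,i+1,t})/2}$ and $z|z|^{-(K-1)/K}\e^{(u_{K,K,t}-u_{K,1,t})/2}$ using the asymptotics of $u_{K,i,1}$ shows each is bounded by $|z|^{1/K}+O(t^{-1/(K+1)})$. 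On $C^\rext$ the Higgs field equals $\varphi$. By the pointwise argument of Lemma \ref{lem:collected} applied to $\Phi_t$, $\norm{M_{\Phi_t}}_{C^0}\leq c_M$ uniformly in $t$, so $\norm{t^2 M_{\Phi_t}\gamma}_{L^2}\leq Ct^2\norm{\gamma}_{L^2}\leq Ct^2\norm{\gamma}_{H^2}$.

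The main technical point lies in the first estimate, where careful bookkeeping of the rescaling exponent $K/(K+1)$, combined with the cancellations in $\widetilde c(w)$ near $w=0$ (which arise from the precise choice of parabolic weights in Construction \ref{construction:limiting}), ensures that $A_t^\app$ is genuinely smooth through each ramification point with the claimed polynomial growth in $t$; once this is in hand, the remaining two bounds are immediate consequences of the product rule and two-dimensional Sobolev embedding.
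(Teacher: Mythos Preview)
Your approach is essentially the same as the paper's: both bound the diagonal coefficients of $A_t^{\app}$ via rescaling and the known asymptotics of $u_{K,i}$, then feed \eqref{eq:appbound} into the decomposition $\de_{A_t}\gamma=\de\gamma+[A_t,\gamma]$ and finally combine with a uniform bound on $M_{\Phi_t}$. The only cosmetic differences are that the paper rescales via the Toda variable $\zeta=\frac{2K}{K+1}t|z|^{(K+1)/K}$ (so that $|f'_{i,t}|=|g'_{K,i}(\zeta)|\cdot 2t|z|^{1/K}\leq Ct$ directly) rather than $w=t^{K/(K+1)}|z|$, and for \eqref{eq:bound12} it uses the simpler product estimate $\norm{[A_t,\gamma]}_{H^k}\le\norm{A_t}_{C^k}\norm{\gamma}_{H^k}$ without invoking the Sobolev embedding $H^2\hookrightarrow L^\infty$.
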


\begin{proof}
We first prove the bound in \eqref{eq:appbound}. From Remark \ref{rem:regunitarygauge}, $A_t$ is diagonal with diagonal elements like
\begin{equation} \label{eq:diag1}
 -\frac{\alpha_{K,i}}{2} + \frac{|z|}{4} \frac{\de (u_{K,i,t} \chi) }{\de |z|}.
\end{equation}
This has the same asymptotics as 
\begin{equation}
 f_{i,t} :=  -\frac{\alpha_{K,i}}{2} + \frac{|z|}{4} \frac{\de u_{K,i,t}}{\de |z|},
\end{equation}
thus the diagonal elements of $A_t$ are uniformly bounded in $t$.
Since, we're computing the $C^1$ bound, we will
show that $|\del_{|z|} f_{i,t}| \leq Ct$.
Recall that $u_{K,i,t} = \zeta_t^*v_{K,i}$ where $v_{K,i}$
solve the affine Toda lattice in \eqref{eq:toda} and $\zeta_t(|z|)=\frac{2K}{K+1} |z|^{\frac{K+1}{K}}$.
Consequently,
\begin{equation}
 f_{i,t} = \zeta_t^* g_{K,i} \qquad g_{K,i}(\zeta) :=\left(-\frac{\alpha_{K,i}}{2} + \frac{(K+1)\zeta}{4K} \zeta \del_{\zeta} v_{K,i}(\zeta) \right).
\end{equation}
Note that $\lim_{\zeta \to 0} g'_{K,i}(\zeta) =0$ and 
$g'_{K,i}(\zeta)$ asymptotically decays like $\e^{-c_1 \zeta}$ from \eqref{eq:85b}.
Thus there is a constant $C_1>0$ such that for all $z$,
\begin{equation}
\left|f'_{i,t}(|z|)\right| = \left|g'_{K,i}(\zeta_t(|z|)) \cdot 2 t|z|^{\frac{1}{K}}\right| \leq C_1 t.
\end{equation}
Consequently the derivative of \eqref{eq:diag1} obeys a similar bound. The estimate in \eqref{eq:appbound} follows.

The bounds in \eqref{eq:bound12} and \eqref{eq:bound13} are immediate corollaries. For \eqref{eq:bound12}, observe that
\begin{equation}
 \norm{\de_{A_t} \gamma}_{H^k} \leq  \norm{\de \gamma}_{H^k}
 + \norm{[A_t, \gamma]}_{H^k}  \leq 
 \norm{\gamma}_{H^{k+1}}
 + \norm{A_t}_{C^k} \norm{\gamma}_{H^{k}} \leq C' t \norm{\gamma}_{H^{k+1}}.
\end{equation}
The bound in \eqref{eq:bound13} follows from \eqref{eq:bound12} and the bound on $M_{\Phi_t}$ in \eqref{eq:Mbound}.
\end{proof}

Using this lemma, we can derive the following estimate on the nonlinear terms in \eqref{eq:sum}.
\begin{lem}[Estimate on nonlinear terms] \cite[Lemma 6.9]{MSWW14} \label{lem:regnonlinear}
There exists a constant $\widehat{C}>0$ such that
\begin{equation}
 \norm{Q_t(\gamma_1) - Q_t(\gamma_2)}_{L^2} \leq \widehat{C} \xi t^2 \norm{\gamma_1 -\gamma_2}_{H^2}
\end{equation}
for all $\xi \in (0, 1]$ and $\gamma_1, \gamma_2$ satisfying $\norm{\gamma_i} \leq \xi$.
\end{lem}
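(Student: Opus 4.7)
The plan is to follow the standard Taylor-remainder argument, adapting \cite[Lemma 6.9]{MSWW14} to higher rank. By the very definition of $Q_t$ in the decomposition $\mathbf{F}_t^\app(\gamma) = \mathbf{F}_t^\app(0) + L_t\gamma + Q_t(\gamma)$, both $Q_t(0) = 0$ and $DQ_t(0) = 0$ hold, and $D^2 Q_t = D^2 \mathbf{F}_t^\app$. Applying the fundamental theorem of calculus twice along the path $\gamma_s = \gamma_2 + s(\gamma_1 - \gamma_2)$ yields
\begin{equation}
Q_t(\gamma_1) - Q_t(\gamma_2) = \int_0^1\!\!\int_0^1 D^2 \mathbf{F}_t^\app(s' \gamma_s)[\gamma_s,\, \gamma_1 - \gamma_2]\, ds'\, ds.
\end{equation}
Since $\|\gamma_s\|_{H^2} \leq \xi$ along the path, the claim reduces to a uniform operator bound of the form $\|D^2 \mathbf{F}_t^\app(\beta)[\eta_1, \eta_2]\|_{L^2} \leq C t^2 \|\eta_1\|_{H^2}\|\eta_2\|_{H^2}$ for all $\beta$ with $\|\beta\|_{H^2} \leq 1$; the factor $\xi$ then appears by pulling out $\|\gamma_s\|_{H^2}$.

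For this uniform bound I would split $\mathbf{F}_t^\app$ into its curvature and Higgs pieces as in \eqref{eq:Fapp}. The Higgs piece $-i\star t^2[\e^{-\gamma}\Phi_t\e^{\gamma},\, \e^{\gamma}\Phi_t^{\dagger_{h_0}}\e^{-\gamma}]$ already carries the required $t^2$ explicitly, and its second variation in $\gamma$ is a bounded matrix-valued expression in $\Phi_t$, $\Phi_t^{\dagger_{h_0}}$, and $\exp(\pm\gamma)$. Since $|\Phi_t|_{g_C, h_0}$ is uniformly bounded (as in Lemma \ref{lem:collected}; inspecting \eqref{eq:regmodunitary} confirms this for $\Phi_t$ itself on the disks) and the Sobolev embedding $H^2 \hookrightarrow L^\infty$ on the closed real surface $C$ controls $\exp(\pm\gamma)$ pointwise for $\|\gamma\|_{H^2}\leq 1$, each second-order Higgs contribution is dominated pointwise by $C t^2 |\eta_1||\eta_2|$. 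Two applications of the Sobolev embedding (or $\|fg\|_{L^2} \leq \|f\|_{L^2}\|g\|_{L^\infty}$) then produce the desired $L^2$ estimate.

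For the curvature piece $-i\star F_{A_t^{\exp(-\gamma)}}$, the transformed connection is $A_t + \alpha(\gamma)$ where $\alpha$ is an analytic function of $\gamma$, $\de_{A_t}\gamma$, and $A_t$; its expansion produces second-order terms of schematic form $[\de_{A_t}\eta_1, \eta_2]$, $[A_t, [\eta_1, \eta_2]]$, $[\de_{A_t}\eta_1, \de_{A_t}\eta_2]$, and higher brackets modulated by analytic functions of $\beta$. Invoking Lemma \ref{lem:MSWW68} --- which gives $\|A_t\|_{C^1} \leq Ct$ and $\|\de_{A_t}\eta\|_{H^k} \leq Ct\|\eta\|_{H^{k+1}}$ --- together with the $2$-dimensional Sobolev product rules, each such term is bounded by $Ct^2\|\eta_1\|_{H^2}\|\eta_2\|_{H^2}$, which matches the bound from the Higgs piece. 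The main bookkeeping subtlety, and the step I expect to require the most care, is verifying that the cutoff function $\chi$ in \eqref{eq:regHapp} does not introduce extra $t$-growth in $A_t$; this follows because $\chi$ and its derivatives are supported in the annulus $\{\tfrac{1}{2} \leq |z_j| \leq 1\}$, where $u_{K_j,i,t}$ and its derivatives are exponentially small in $t$ by Proposition \ref{prop:MK1model}, so the truncated connection obeys the same uniform $Ct$ estimate. Finally, the block-diagonal structure of the approximate solution in the gauge of Proposition \ref{prop:reggoodgaugehinfty} reduces the pointwise estimates to a block-by-block calculation, each block being the direct $SU(K_j)$ analog of \cite[Lemma 6.9]{MSWW14}.
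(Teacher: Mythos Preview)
Your proposal is correct and follows essentially the same route as the paper: the paper's own proof simply says that the argument of \cite[Lemma 6.9]{MSWW14} carries over verbatim once one substitutes Lemma \ref{lem:MSWW68} for the rank-$2$ bound \cite[Lemma 6.8]{MSWW14}, and you have spelled out precisely that argument (Taylor remainder, splitting into curvature and Higgs pieces, Sobolev embedding, and the $C^1$ bound on $A_t$). Your remark about the cutoff $\chi$ is already subsumed in Lemma \ref{lem:MSWW68}, so it is not an additional subtlety.
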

\begin{proof}[Proof of Lemma \ref{lem:regnonlinear}]
The proof in \cite[Lemma 6.9]{MSWW14} carries over to the case of regular $SL(n,\C)$-Higgs bundles using Lemma \ref{lem:MSWW68} in place of \cite[Lemma 6.8]{MSWW14}.
\end{proof}

\subsection{Main Theorem}

\begin{proof}[Proof of Main Theorem \ref{thm:main}]
For all $\xi \in (0, 1]$ and $t>t_0$, hermitian sections $\gamma_1, \gamma_2$ satisfying $\norm{\gamma_i} \leq \xi$, 
\begin{eqnarray}
  \|\mathbf{T}_t(\gamma_1-\gamma_2)\|_{H^2} &=& \| -L_t^{-1} \left(Q_t(\gamma_1)- Q_t(\gamma_2) \right) \|_{H^2}\\ \nonumber
  &\leq& \|L_t^{-1} \|_{\mathcal{L}(L^2, H^2)} \| \|Q_t(\gamma_1)- Q_t(\gamma_2)\|_{L^2}\\ \nonumber
  & \leq & \tilde{C} t^2  \cdot \hat{C}\xi t^2 \|\gamma_1 - \gamma_2\|_{L^2}.
\end{eqnarray}
In the last line, we used the bounds in Lemma \ref{lem:regnonlinear} and Proposition \ref{prop:boundoninverse}.
Consequently setting $\xi$ equal to $r_t=\frac{1}{\tilde{C} \hat{C} t^4}$, $\mathbf{T}_t$ is a contraction
on the ball of radius $r_t$.

To see that there is a radius $\rho_t \leq r_t$ such that $\mathbf{T}_t(B_{\rho_t}) \subset
B_{\rho_t}$,
note that 
\begin{eqnarray}
  \|\mathbf{T}_t(\gamma)\|_{H^2} &\leq& \tilde{C} t^2  \cdot \hat{C}\xi t^2 \|\gamma\|_{L^2} + \|\mathbf{T}_t (0)\|_{H^2} \\ \nonumber
  &\leq& \tilde{C} t^2  \cdot \hat{C}\xi t^2 \|\gamma\|_{L^2} + \tilde{C}t^2 Ce^{-\delta t}.
\end{eqnarray}
Note that $\|\mathbf{T}_t(0)\|$ decays exponentially  in $t$ (Proposition \ref{defpropreg}) like $c\e^{-\delta t}$
for $t>t_0$.
Consequently, given any $\eps>0$, there exists a constant $C$ such that $\mathbf{T}_t(B_{\rho_t}) \subset
B_{\rho_t}$ for $\xi=\rho_t =C\e^{-(\delta+\eps)t}$.
Moreover, there exists a constant $C'$ such that 
$\mathbf{T}_t(B_{\rho'_t}) \subset
B_{\rho'_t}$ for $\xi=\rho'_t =C't^{-4-\eps}$. 

$\mathbf{T}_t$ is a contraction mapping on $B_{\rho_t}$ and $B_{\rho'_t}$.
Consequently, there
is exists a fixed point $\gamma_t$ of $\mathbf{T}_t$ in the ball $B_{C\e^{(-\delta+\eps)t}}$. Moreover, it is unique in the ball $B_{C't^{-4-\eps}}$.
\end{proof}

\begin{cor}\label{cor:main}
As $t \to \infty$, the harmonic metrics $h_t$ converge pointwise to $h_\infty$.
\end{cor}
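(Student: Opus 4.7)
My plan is to combine Theorem \ref{thm:main} with the pointwise convergence of the approximate metrics to $h_\ell$ on $C \setminus Z$. By Theorem \ref{thm:main}, for each $t > t_0$ we have $h_t(v,w) = h_t^{\app}(e^{-\gamma_t}v, e^{-\gamma_t}w)$ with $\|\gamma_t\|_{H^2(i\mathfrak{su}(E))} \leq C e^{(-\delta+\eps)t}$ for any preassigned small $\eps>0$. Since $C$ is a compact 2-manifold, the Sobolev embedding $H^2 \hookrightarrow C^0$ yields
\begin{equation*}
\|\gamma_t\|_{L^\infty(C)} \leq C'' e^{(-\delta+\eps)t} \longrightarrow 0,
\end{equation*}
so $e^{-\gamma_t} \to \Id_E$ uniformly on $C$. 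Since $h_\ell$ is smooth on $C \setminus Z$, statements about pointwise convergence of $h_t$ should be understood on $C \setminus Z$ (the limiting metric is singular at $Z$).

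Next I would show that $h_t^{\app} \to h_\ell$ pointwise on $C \setminus Z$. On the exterior region $C^{\rext} = C - \overline{\cup_p \bD_p}$ this is immediate from the definition, since $h_t^{\app} = h_\ell$ there. On each disk $\bD_p$, comparing the formula \eqref{eq:regHapp} for $h_t^{\app}$ with the formula \eqref{eq:reggoodgaugehinfty} for $h_\ell$, the two metrics differ (block by block) only by the diagonal factors $e^{\chi(|z_j|) u_{K_j,i,t}(|z_j|)}$. It therefore suffices to show that $u_{K_j,i,t}(|z_j|) \to 0$ pointwise on $\bD_p \setminus \{p\}$. Using the rescaling $u_{K,i,t}(r) = u_{K,i}(t^{K/(K+1)} r)$ from \eqref{eq:regrescale} together with the exponential decay bound \eqref{eq:85b} of Proposition \ref{prop:MK1model}, for any fixed $r > 0$ one has $u_{K,i,t}(r) \to 0$ as $t \to \infty$, which gives the claim.

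Finally, combining the two ingredients: for fixed $p \in C \setminus Z$,
\begin{equation*}
h_t(p) = h_t^{\app}(e^{-\gamma_t(p)} \cdot,\, e^{-\gamma_t(p)} \cdot) \longrightarrow h_\ell(p)(\cdot,\cdot),
\end{equation*}
since $e^{-\gamma_t(p)} \to \Id$ (by Sobolev) and $h_t^{\app}(p) \to h_\ell(p)$ (by the previous step), while these metric tensors depend continuously on their arguments.

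The only technical point I expect to need care is the Sobolev step: the bound $\|\gamma_t\|_{H^2} \leq C e^{(-\delta+\eps)t}$ is taken in the unitary gauge, so the convergence $e^{-\gamma_t} \to \Id$ is with respect to the fixed background metric $h_0$, and one must verify that conjugating by $e^{-\gamma_t}$ inside the bilinear form $h_t^{\app}$ commutes suitably with the pointwise limit $h_t^{\app} \to h_\ell$. This is straightforward since $h_t^{\app}$ and $h_\ell$ are fiberwise continuous functions of the gauge transformation, and both the gauge transformation and the approximate metric are converging (respectively to $\Id$ uniformly and to $h_\ell$ pointwise) on any compact subset of $C \setminus Z$.
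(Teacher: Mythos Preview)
Your proposal is correct and follows essentially the same approach as the paper's proof, which is only two sentences long: it asserts that $h_t^{\app} \to h_\infty$ pointwise ``by construction'' and that $\gamma_t \to 0$ forces $h_t \to h_\infty$. You have simply supplied the details the paper omits---the Sobolev embedding $H^2 \hookrightarrow C^0$ to upgrade the $H^2$ bound on $\gamma_t$ to uniform convergence, and the explicit verification from \eqref{eq:regHapp}, \eqref{eq:reggoodgaugehinfty}, and the decay of $u_{K,i,t}$ that $h_t^{\app} \to h_\ell$ pointwise on $C\setminus Z$.
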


\begin{proof}
 The approximate metrics $h_t^\app$ converge to $h_\infty$ pointwise by construction. The metric $h_t(v,w)=h_t^\app(\e^{-\gamma_t} v, \e^{-\gamma_t} w)$; consequently, because $\gamma_t \to 0$, $h_t$ also converge to $h_\infty$ pointwise.
\end{proof}

\begin{rem}
It's worth noting that this description of $h_t$ in terms of $h_t^\app$ does not work uniformly in $\cM'$.  As discussed in \S\ref{sec:stratification}, in our construction of $h_t^\app$, we have stratified $\cM'$.
Our construction works uniformly provided we stay in a single piece of the stratification.
However, the radius of the disks $\bD_p$ we use in the desingularization of $h_\ell$ goes to
zero when we pass between strata, as shown in Figure \ref{fig:stratification}.
  \begin{figure}[ht]
  \begin{center}
   \includegraphics[height=1in]{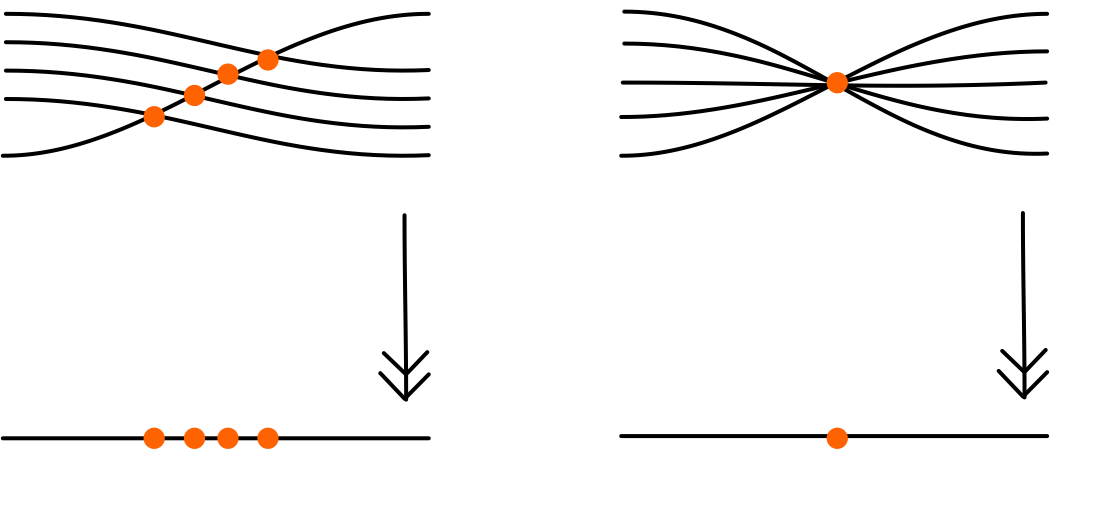} 
   \caption{\label{fig:stratification} Local model of spectral cover as one passed from (\textsc{Left}) $N_2=4$ to (\textsc{Right})$N_5=1$.} 
  \end{center}
 \end{figure}
Ideally, our construction would work uniformly on all of $\cM'$ since it should not be problematic
for ramification points to coalesce unless there is a collapsing cycle in the spectral cover, as shown in Figure \ref{fig:collapse}.
 \begin{figure}[ht]
  \begin{center}
      \includegraphics[height=1in]{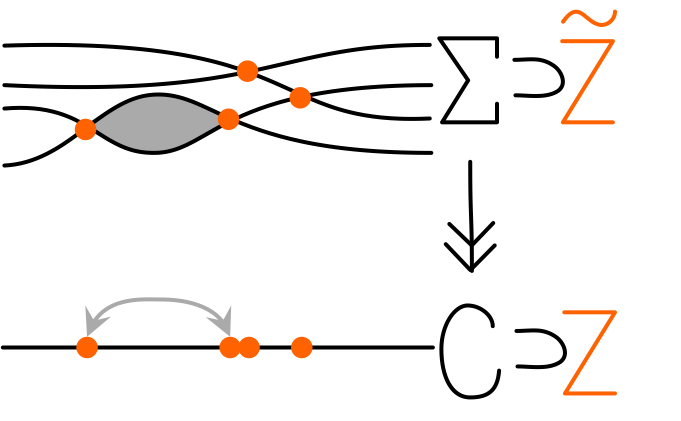}
   \caption{\label{fig:collapse}If the two indicated points of $Z$ coalesce, then a  cycle in $\Sigma$ collapses.} 
  \end{center}
 \end{figure}
\end{rem}
 
\bibliography{ends}{}

\begin{thebibliography}{MSWW17}

\bibitem[B\"00]{Bar}
C.~B\"{a}r, ``The {Dirac} operator on hyperbolic manifolds of finite volume,''
  {\em J. Differential Geom.} {\bfseries 54} no.~3, (2000) 439--488,
  \href{http://arxiv.org/abs/math/0010233}{{\ttfamily arXiv:math/0010233}}.

\bibitem[Biq96]{biquardparabolic}
O.~Biquard, ``Sur les fibr\'es paraboliques sur une surface complexe,'' {\em J.
  London Math. Society} {\bfseries 53} (1996) 392--216.

\bibitem[CV91]{Cecotti:1991me}
S.~Cecotti and C.~Vafa, ``Topological-antitopological fusion,''
{\em Nucl. Phys.} {\bfseries B367} (1991) 359--461.

\bibitem[CV92]{CecottiVafaExact}
S.~Cecotti and C.~Vafa, ``{Exact Results for Supersymmetric Sigma Models},''
  {\em Phys. Rev. Lett} {\bfseries 68} (1992) 0903--906,
  \href{http://arxiv.org/abs/hep-th/9111016}{{\ttfamily arXiv:hep-th/9111016}}.

\bibitem[FN17]{FredricksonNeitzke}
L.~Fredrickson and A.~Neitzke, ``From {$S^1$}-fixed points to {$\cW$}-algebra
  representations,'' \href{http://arxiv.org/abs/1709.06142}{{\ttfamily
  arXiv:1709.06142}}.

\bibitem[GL10]{GuestLin}
M.~Guest and C.-S. Lin, ``Nonlinear {PDE} aspects of the $tt^*$ equations of
  {Cecotti} and {Vafa},'' \href{http://arxiv.org/abs/1010.1889}{{\ttfamily
  arXiv:1010.1889}}.

\bibitem[GL13]{GuestLinIsomonodromy}
M.~Guest and C.-S. Lin, ``Isomonodromy aspects of the $tt^*$ equations of
  {Cecotti} and {Vafa} {II}. {Riemann-Hilbert} problem,''
  \href{http://arxiv.org/abs/1312.4825}{{\ttfamily arXiv:1312.4825}}.

\bibitem[GMN09]{GMNhitchin}
D.~Gaiotto, G.~W. Moore, and A.~Neitzke, ``Wall-crossing, {Hitchin} systems,
  and the {WKB} approximation,''
  \href{http://arxiv.org/abs/0907.3987}{{\ttfamily arXiv:0907.3987}}.

\bibitem[GMN10]{GMNwallcrossing}
D.~Gaiotto, G.~W. Moore, and A.~Neitzke, ``Four-dimensional wall-crossing via
  three-dimensional field theory,'' {\em Commun. Math. Phys.} {\bfseries 299}
  (2010) 163--224, \href{http://arxiv.org/abs/0807.4723}{{\ttfamily
  arXiv:0807.4723}}.

\bibitem[Mas86]{localholcoord}
H.~Masur, ``Lower bounds for the number of saddle connections and closed
  trajectories of a quadratic differential,'' in {\em Holomorphic Functions and
  Moduli I}, pp.~215--228.
\newblock 1986.

\bibitem[Maz91]{MazzeoEdge1}
R.~Mazzeo, ``Elliptic theory of differential edge operators, {I},'' {\em Comm.
  Par. Diff. Eqns} {\bfseries 16} no.~10, (1991) 1615--1664.

\bibitem[Moc13]{MochizukiToda}
T.~Mochizuki, ``Harmonic bundles and {Toda} lattices with opposite sign,''
  \href{http://arxiv.org/abs/1301.1718}{{\ttfamily arXiv:1301.1718}}.

\bibitem[Moc15]{Mochizuki:2015aa}
T.~Mochizuki, ``Asymptotic behavior of certain families of harmonic bundles on
  {Riemann} surfaces,'' \href{http://arxiv.org/abs/1508.05997}{{\ttfamily
  arXiv:1508.05997}}.

\bibitem[MSWW14]{MSWW15}
R.~Mazzeo, J.~Swoboda, H.~Weiss, and F.~Witt, ``Limiting configurations for
  solutions of {Hitchin's} equation,'' {\em Semin. Theor. Spectr. Geom.}
  {\bfseries 31} (2012-2014) 91--116,
  \href{http://arxiv.org/abs/1502.01692}{{\ttfamily arXiv:1502.01692}}.

\bibitem[MSWW16]{MSWW14}
R.~Mazzeo, J.~Swoboda, H.~Weiss, and F.~Witt, ``Ends of the moduli space of
  {Higgs} bundles,'' {\em Duke Math. J.} {\bfseries 165} no.~12, (2016)
  2227--2271, \href{http://arxiv.org/abs/1405.5765}{{\ttfamily
  arXiv:1405.5765}}.

\bibitem[MSWW17]{MSWW17}
R.~Mazzeo, J.~Swoboda, H.~Weiss, and F.~Witt, ``{Asymptotic Geometry of the
  Hitchin Metric},'' \href{http://arxiv.org/abs/1709.03433}{{\ttfamily
  arXiv:1709.03433}}.

\bibitem[MW15]{MazzeoWeiss}
R.~Mazzeo and H.~Weiss, ``Teichm\"uller theory for conic surfaces,''
  \href{http://arxiv.org/abs/1509.07608}{{\ttfamily arXiv:1509.07608}}.

\bibitem[Sim90]{simpsonnoncompact}
C.~T. Simpson, ``Harmonic bundles on noncompact curves,'' {\em Journal of the
  America Mathematical Society} {\bfseries 3} no.~3, (1990) 713--770.

\end{thebibliography}
\bibliographystyle{fredrickson}

\end{document}